\theoremstyle{plain}
\newtheorem{theorem}{Theorem}[section]
\newtheorem{lemma}[theorem]{Lemma}
\newtheorem{mainth}{Theorem}
\newtheorem{arith}{Theorem}
\newtheorem{proposition}[theorem]{Proposition}
\theoremstyle{definition}
\newtheorem{definition}[theorem]{Definition}
\newtheorem*{question}{Problem}
\theoremstyle{remark}
\newtheorem{remark}[theorem]{Remark}
\newtheorem{fact}[theorem]{Fact}
\numberwithin{equation}{section}
\newcommand{\bC}{\mathbb{C}}
\newcommand{\bN}{\mathbb{N}}
\newcommand{\bP}{\mathbb{P}}
\newcommand{\bQ}{\mathbb{Q}} 
\newcommand{\bR}{\mathbb{R}}
\newcommand{\bZ}{\mathbb{Z}}  
\newcommand{\sP}{\mathsf{P}}
\newcommand{\sD}{\mathsf{D}}
\newcommand{\sF}{\mathsf{F}}
\newcommand{\sJ}{\mathsf{J}}
\newcommand{\sK}{\mathsf{K}}
\newcommand{\sA}{\mathsf{A}}
\newcommand{\cB}{\mathcal{B}}
\newcommand{\cS}{\mathcal{S}}
\newcommand{\cE}{\mathcal{E}} 
\newcommand{\can}{\operatorname{can}}
\newcommand{\supp}{\operatorname{supp}}
\newcommand{\diam}{\operatorname{diam}}
\newcommand{\Int}{\operatorname{int}}
\newcommand{\rd}{\mathrm{d}}
\newcommand{\Id}{\mathrm{Id}}
\begin{document} 

\title[Equidistribution in non-archimedean dynamics]{
Adelic equidistribution, 
Characterization of equidistribution, and
a general equidistribution theorem in non-archimedean dynamics}

\author[Y\^usuke Okuyama]{Y\^usuke Okuyama}
\address{
Division of Mathematics,
Kyoto Institute of Technology,
Sakyo-ku, Kyoto 606-8585 Japan}
\email{okuyama@kit.ac.jp}

\date{\today}

\begin{abstract}
We determine when the equidistribution property for possibly moving targets holds
for a rational function of degree more than one on the projective line
over an algebraically closed field of any characteristic and
complete with respect to a non-trivial
absolute value. This characterization could be useful in the positive characteristic case.
Based on the variational argument,
we give a purely local proof of the adelic equidistribution theorem for 
possibly moving targets,
which is due to Favre and Rivera-Letelier,
using a dynamical Diophantine approximation theorem 
by Silverman and by Szpiro--Tucker.
We also give a proof of a general equidistribution theorem for possibly moving targets,
which is due to Lyubich in the archimedean case and 
due to Favre and Rivera-Letelier
for constant targets in the non-archimedean and any characteristic case and
for moving targets in the non-archimedean and $0$ characteristic case.
\end{abstract}

\subjclass[2010]{Primary 37P50; Secondary 11S82.}

\keywords{characterization of equidistribution,
adelic equidistribution, Diophantine approximation, 
equidistribution theorem, non-archimedean dynamics, complex dynamics}

\maketitle

\section{Introduction}\label{sec:intro}

Let $K$ be an algebraically closed field of any characteristic and complete with respect
to a non-trivial and possibly non-archimedean absolute value $|\cdot|$, and 
let $f\in K(z)$ be a rational function of degree $d>1$ on 
the projective line $\bP^1=\bP^1(K)$ over $K$.
The Berkovich projective line $\sP^1=\sP^1(K)$ over $K$ 
provides a compactification of the classical $\bP^1$, 
containing $\bP^1$ as a dense subset. 
Under the assumption that $K$ is algebraically closed, 
$K$ is archimedean if and only if $K\cong\bC$, and
then $\sP^1(\bC)\cong\bP^1(\bC)$. 
The action of $f$ on $\bP^1$ canonically
extends to a continuous, open, surjective
and fiber-discrete endomorphism on $\sP^1$, preserving $\sP^1$ and $\sP^1\setminus\bP^1$.
The exceptional set of (the extended) $f$ is 
\begin{gather*}
 E(f):=\left\{a\in\bP^1:\#\bigcup_{n\in\bN}f^{-n}(a)<\infty\right\},
\end{gather*}
which agrees with the set of all superattracting periodic points $a\in\bP^1$ of $f$
such that $\deg_{f^j(a)} f=d$ for any $j\in\bN$.
The {\itshape Berkovich} Julia set of $f$ is
 \begin{gather*}
 \sJ(f):=\left\{\cS\in\sP^1:
 \bigcap_{U:\text{open in }\sP^1, \cS\in U}\left(\bigcup_{n\in\bN}f^n(U)\right)=\sP^1\setminus
 E(f)\right\}
 \end{gather*}
(cf.\ \cite[Definition 2.8]{FR09}).
Let $\delta_\cS$ be the Dirac measure on $\sP^1$ at a point $\cS\in\sP^1$. 
For each rational function $a\in K(z)$, which we will call a {\itshape possibly moving target},
on $\bP^1$ and each $n\in\bN$, 
let us consider the probability Radon measure
\begin{gather}
 \nu_n^a=\nu_{f^n}^a:=\frac{1}{d^n+\deg a}\sum_{w\in\bP^1:f^n(w)=a(w)}\delta_w\label{eq:roots}
\end{gather}
on $\sP^1$. Here the sum takes into account the (algebraic) multiplicity of each root of the
equation $f^n(\cdot)=a(\cdot)$ in $\bP^1$. 
In Section \ref{sec:facts}, among other generalities,
we recall a variational characterization of the equilibrium (or canonical) measure $\mu_f$
of $f$ on $\sP^1$
as a unique solution of a Gauss variational problem.

Our principal result determines the conditions on $f$ and $a$ under which 
the equidistribution property
\begin{gather}
 \lim_{n\to\infty}\nu_n^a=\mu_f\quad\text{weakly on }\sP^1\label{eq:general}
\end{gather}
holds. Let us denote the normalized chordal distance on $\bP^1$ by $[z,w]$.

\begin{mainth}\label{th:julia}
 Let $K$ be an algebraically closed field of any characteristic
 and complete with respect to a non-trivial absolute value.
 Let $f\in K(z)$ be a rational function on $\bP^1$ of degree $d>1$ and
 let $a\in K(z)$ be a rational function on $\bP^1$. 
Then for every sequence 
$(n_j)\subset\bN$ tending to $\infty$, the following three conditions are
equivalent$:$ 
\begin{enumerate}
 \item The equidistribution property 
\begin{gather}
 \lim_{j\to\infty}\nu_{n_j}^a=\mu_f\quad\text{on }\sP^1\label{eq:subgeneral}
\end{gather}
       holds. Equivalently, for each weak limit $\nu$ of a subsequence of $(\nu_{n_j}^a)$, 
\begin{gather}
 \nu=\mu_f;\tag{\ref{eq:subgeneral}'}\label{eq:weaklim}
\end{gather}
 \item each weak limit $\nu$ of a subsequence of $(\nu_{n_j}^a)$ satisfies
 \begin{gather}
  \supp\nu\subset\sJ(f);\label{eq:support}
 \end{gather}
 \item under the additional assumption that $K$ is non-archimedean, 
       on $\sP^1\setminus\bP^1$, we have
 \begin{gather}
  \lim_{j\to\infty}\frac{1}{d^{n_j}}\log[f^{n_j},a]_{\can}(\cdot)=0.\label{eq:hyperbolic}
 \end{gather}
\end{enumerate}
Under these three conditions, we have
 \begin{gather}
  \lim_{j\to\infty}\frac{1}{d^{n_j}}\int_{\sP^1}\log[f^{n_j},a]_{\can}(\cdot)\rd\mu_f=0.\label{eq:Valiron}
 \end{gather}
 Moreover, if $a$ is constant, then \eqref{eq:Valiron} holds
 without assuming \eqref{eq:subgeneral}, \eqref{eq:support} or \eqref{eq:hyperbolic}.

 Here, the proximity function $\cS\mapsto[f^n,a]_{\can}(\cS)$ of $f^n$
 $(n\in\bN)$ and $a$
 on $\sP^1$ is the unique continuous extension of $z\mapsto [f^n(z),a(z)]$ on $\bP^1$ to $\sP^1$. 
 For its construction, see Proposition $\ref{th:extension}$.
\end{mainth}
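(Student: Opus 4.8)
\emph{Plan of proof.} The plan is to derive all three conditions (and \eqref{eq:Valiron}) from one potential-theoretic identity for the measures $\nu_n^a$, after which the equivalences become bookkeeping and the only real work is a discrete-energy estimate. First I would pass from $\nu_n^a$ to its potential relative to the Arakelov--Green function $g_{\mu_f}$ of $f$ on $\sP^1\times\sP^1$; the facts I use about $g_{\mu_f}$ — that $(\cS,w)\mapsto g_{\mu_f}(\cS,w)+\log[\cS,w]$ extends continuously to $\sP^1\times\sP^1$, that $\Delta_\cS g_{\mu_f}(\cS,w)=\delta_w-\mu_f$, and that $\int_{\sP^1}g_{\mu_f}(\cS,w)\rd\mu_f(\cS)=0$ — are among the generalities recalled in Section \ref{sec:facts}. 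Writing $D_n:=d^n+\deg a$ and $p_n(\cS):=\int_{\sP^1}g_{\mu_f}(\cS,w)\rd\nu_n^a(w)$, one has $\Delta p_n=\nu_n^a-\mu_f$ and $\int_{\sP^1}p_n\rd\mu_f=0$; and a direct computation with a homogeneous lift of $f^n$ together with the construction of $[f^n,a]_{\can}$ in Proposition \ref{th:extension} gives the identity
\begin{gather*}
 p_n(\cS)=\frac{1}{D_n}\Bigl(\int_{\sP^1}\log[f^n,a]_{\can}\rd\mu_f-\log[f^n,a]_{\can}(\cS)\Bigr)+\eta_n(\cS),\qquad\|\eta_n\|_\infty\to0,
\end{gather*}
the error $\eta_n$ collecting uniformly small terms of the shape $D_n^{-1}(g_f\circ f^n)$ and $(d^n/D_n-1)g_f$; in particular $\cS\mapsto\log[f^n,a]_{\can}(\cS)$ is finite off $\bP^1$ and $D_n^{-1}\int_{\sP^1}\log[f^n,a]_{\can}\rd\mu_f$ is bounded in $n$.

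\emph{The trivial implications and \eqref{eq:Valiron}.} $(1)\Rightarrow(2)$ is immediate from $\supp\mu_f=\sJ(f)$. For the ``moreover'' part: if $a$ is constant, then $f^*\mu_f=d\,\mu_f$ and $f_*f^*=d\cdot\Id$ on measures give $(f^n)_*\mu_f=\mu_f$, so $\int_{\sP^1}\log[f^n,a]_{\can}\rd\mu_f=\int_{\sP^1}\log[z,a]_{\can}\rd\mu_f$ is independent of $n$ and finite (finiteness from the regularity of $g_{\mu_f}$), and dividing by $d^{n_j}$ gives \eqref{eq:Valiron}. For moving $a$ under the equivalent hypotheses: once \eqref{eq:weaklim} holds, the identity above forces $D_{n_j}^{-1}\bigl(\int\log[f^{n_j},a]_{\can}\rd\mu_f-\log[f^{n_j},a]_{\can}(\cS)\bigr)\to0$; specializing $\cS$ to $\sP^1\setminus\bP^1$ and using $\log[\cdot,\cdot]\le0$ together with \eqref{eq:hyperbolic} yields $D_{n_j}^{-1}\int\log[f^{n_j},a]_{\can}\rd\mu_f\to0$, which is \eqref{eq:Valiron}.

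\emph{The equivalences.} $(1)\Leftrightarrow(3)$ (non-archimedean case) is read off the identity: for smooth $\varphi$ on $\sP^1$ one has $\int\varphi\,\rd(\nu_n^a-\mu_f)=\int\varphi\,\rd\Delta p_n=\int p_n\,\rd\Delta\varphi$ by self-adjointness of $\Delta$, and $\Delta\varphi$ is a finite atomic measure carried by $\sP^1\setminus\bP^1$ with $\Delta\varphi(\sP^1)=0$; inserting the identity and using the boundedness of $D_n^{-1}\int\log[f^n,a]_{\can}\rd\mu_f$, one sees that ``$d^{-n_j}\log[f^{n_j},a]_{\can}\to0$ on $\sP^1\setminus\bP^1$'' (equivalently $p_{n_j}\to\text{const}$ there) is equivalent, by dominated convergence against $\Delta\varphi$ and density of smooth functions in $C(\sP^1)$, to $\nu_{n_j}^a\to\mu_f$. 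The substantive implication is $(2)\Rightarrow(1)$. Here I would invoke the variational characterization of $\mu_f$ from Section \ref{sec:facts}: $\mu_f$ is the unique minimizer, over all probability Radon measures on $\sP^1$, of the Gauss energy $E_f(\rho):=\iint g_{\mu_f}(z,w)\rd\rho(z)\rd\rho(w)$, and $E_f(\rho)-E_f(\mu_f)=\|\rho-\mu_f\|^2\ge0$ (the energy seminorm) vanishes exactly when $\rho=\mu_f$. Let $\nu$ be a weak limit of a subsequence of $(\nu_{n_j}^a)$ with $\supp\nu\subset\sJ(f)$. As $g_{\mu_f}$ is lower semicontinuous and bounded below, the principle of descent gives $E_f(\nu)\le\liminf_j\widehat E_j$, where $\widehat E_j$ is the diagonal-removed discrete $g_{\mu_f}$-energy of the roots of $f^{n_j}=a$ (with multiplicity); since also $E_f(\mu_f)\le E_f(\nu)$, it then suffices to prove
\begin{gather*}
 \limsup_{j\to\infty}\widehat E_j\le E_f(\mu_f),
\end{gather*}
for then $E_f(\nu)=E_f(\mu_f)$, whence $\nu=\mu_f$, i.e.\ \eqref{eq:weaklim} holds.

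\emph{Where the difficulty lies.} The whole theorem comes down to this last $\limsup$ inequality, and it is here that the hypothesis $\supp\nu\subset\sJ(f)$ is indispensable — without it the inequality is genuinely false, which is exactly how equidistribution breaks down for moving targets. I would expand $\widehat E_j$ through a resultant/discriminant identity for $f^{n_j}-a$ and the product formula, reducing it to $E_f(\mu_f)+2D_{n_j}^{-1}\int\log[f^{n_j},a]_{\can}\rd\mu_f$ plus a ``clustering'' term governed by the critical points of $f^{n_j}-a$ (equivalently by $D_{n_j}^{-1}\sum_{f^{n_j}(w)=a(w)}\log|(f^{n_j}-a)'(w)|$ in a chordal normalization), modulo $o(1)$. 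The first correction has nonpositive $\limsup$ by the previous step. The clustering term is harmless unless the roots of $f^{n_j}-a$ concentrate, i.e.\ $f^{n_j}-a$ becomes uniformly small over an open set; but the iterates of $f$ form a normal family only on $\sP^1\setminus\sJ(f)$, so — quantitatively, via the identity of the first step — such concentration would force limiting mass of $\nu$ into the Fatou set, which $\supp\nu\subset\sJ(f)$ forbids, and hence the clustering term also has nonpositive normalized $\limsup$. The remaining care, which is the reason for casting the criterion in the purely geometric form $(2)$, is to check in arbitrary characteristic that the discriminant computation, the multiplicities in \eqref{eq:roots}, and the regularity of $g_{\mu_f}$ behave as in characteristic $0$, thereby bypassing the $p$-adic analytic ingredients of the Favre--Rivera-Letelier proof.
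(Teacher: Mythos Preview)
Your potential-theoretic identity is essentially the paper's Lemma~\ref{th:Riesz}, and your treatment of the constant-$a$ case of \eqref{eq:Valiron} and of $(3)\Rightarrow(1)$ is along the right lines. The substantive implication $(2)\Rightarrow(1)$, however, is where your plan breaks down.

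You propose to prove $\limsup_j\widehat E_j\le E_f(\mu_f)$ by ``a resultant/discriminant identity for $f^{n_j}-a$ and the product formula''. But the product formula is a \emph{global} arithmetic identity; for a single complete field $K$ there is none, and indeed no a priori upper bound on $\widehat E_j$ of this kind is available locally---that is precisely what the small-height hypothesis supplies in the adelic equidistribution arguments. Your ``clustering term'' governed by $\sum\log|(f^{n_j}-a)'(w)|$ is also untenable: in positive characteristic $(f^{n_j}-a)'$ may vanish identically (cf.\ the example $f(z)=z+z^p$, $a=\Id$ in Section~\ref{sec:example}), so the discriminant vanishes and carries no information, and even in characteristic~$0$ you have not given a rigorous link between this term and $\supp\nu$. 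The discrete-energy route is not how the paper proceeds, and I do not see how to make it work over general $K$.

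The paper's argument for $(2)\Rightarrow(1)$ is shorter and uses only potentials. It proves, for \emph{every} weak limit $\nu$ (with no hypothesis on $\supp\nu$), that $U_\nu\ge 0$ on $\sJ(f)$; then $\supp\nu\subset\sJ(f)$ gives $U_\nu\ge 0$ on $\supp\nu$, and Lemma~\ref{th:canonical} yields $\nu=\mu_f$. For constant $a$ the inequality $U_\nu\ge 0$ on $\sJ(f)$ follows from $\int\Phi_f(f^n(\cdot),a)\rd\mu_f=U_{\mu_f}(a)=0$ and Fatou's lemma applied to your upper bound $\limsup d^{-n_j}\log[f^{n_j},a]_{\can}\le U_\nu$. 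For non-constant $a$ it is a pointwise contradiction argument: if $\{U_\nu<0\}$ met $\sJ(f)$, one first rules out $E(f)\neq\emptyset$, then picks a fixed point $z_0\in\bP^1$, uses that $\bigcup_n f^{-n}(z_0)$ accumulates on all of $\sJ(f)$ to find $z_{-N}\in f^{-N}(z_0)\cap\{U_\nu<0\}\setminus a^{-1}(z_0)$, and observes that $[f^{n_j}(z_{-N}),a(z_{-N})]=[z_0,a(z_{-N})]$ is bounded away from $0$, contradicting the same upper bound at $z_{-N}$. No resultants, no derivatives, and the argument is insensitive to the characteristic.

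A smaller gap: in $(1)\Rightarrow(3)$ your identity only yields that $D_{n_j}^{-1}\bigl(\int\log[f^{n_j},a]_{\can}\rd\mu_f-\log[f^{n_j},a]_{\can}(\cS)\bigr)\to 0$ on $\sP^1\setminus\bP^1$; since both summands have a sign, this does not by itself force each to vanish. The paper closes this with a separate pointwise argument (Lemma~\ref{th:movingvanish}) of the same flavor as above, showing directly that $d^{-n_j}\int\Phi(f^{n_j},a)_f\rd\mu_f\to 0$ once $\nu=\mu_f$.
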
 

In Section \ref{sec:equidist}, we show Theorem \ref{th:julia}
based on the above variational characterization of $\mu_f$.
Theorem \ref{th:julia} is partly motivated by the following
dynamical Diophantine approximation result.
For a number field $k$ with a non-trivial absolute value (or place) $v$,
set $K=\bC_v$ with the extended $v$
(e.g., $K=\bC_p$ for $k=\bQ$ with $p$-adic norm $v$)
and assume that $f\in k(z)$, i.e., that $f$ has its coefficients in $k$. Then
the dynamical Diophantine approximation theorem due to Silverman \cite[Theorem E]{Silverman93} and
Szpiro--Tucker \cite[Proposition 5.3 (in the preprint version, Proposition 4.3)]{ST05}
asserts that for every constant $a\in\bP^1(\overline{k})\setminus E(f)$ and 
every $z\in\bP^1(\overline{k})$ which is wandering under $f$, i.e.,
$\#\{f^n(z):n\in\bN\}=\infty$, we have
\begin{gather}
 \lim_{n\to\infty}\frac{1}{d^n}\log[f^n(z),a]_v=0.\label{eq:diophantine}
\end{gather}
Here $\overline{k}$ denotes the algebraic closure of $k$, and
the notation $[z,w]_v$ emphasizes the dependence of $[z,w]$ on $v$. 
Theorem \ref{th:julia} gives a partial generalization \eqref{eq:diophantine} to general $K$
for possibly non-constant $a$.

In Section \ref{sec:general}, based on the variational argument
and (\ref{eq:diophantine}),
we give a purely local proof of the following adelic equidistribution theorem for 
possibly moving targets, which is a special case of Favre and Rivera-Letelier 
\cite[Th\'eor\`emes A et B]{FR09} (Theorems \ref{th:general} and \ref{th:moving} below)
for non-archimedean $K$ of characteristic $0$.

\begin{arith}\label{th:FR}
 Let $k$ be a number field with a non-trivial absolute value $v$, and
 let $f\in k(z)$ be a rational function on $\bP^1(\bC_v)$ of degree $d>1$ whose
 coefficients are in $k$.
 Then for every rational function $a\in\overline{k}(z)$ on $\bP^1(\bC_v)$
 which is not identically equal to a value in $E(f)$ and whose coefficients are
 in $\overline{k}$,
 $\lim_{n\to\infty}\nu_n^a=\mu_{f,v}$
 weakly on $\sP^1(\bC_v)$.  
Here the notation $\mu_{f,v}$ emphasizes the dependence of $\mu_f$ on $v$.
\end{arith}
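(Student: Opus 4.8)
The plan is to deduce Theorem~\ref{th:FR} from the local characterization in Theorem~\ref{th:julia} by verifying, for every place $v$ of $k$, that each weak limit $\nu$ of a subsequence of $(\nu_{n_j}^a)$ on $\sP^1(\bC_v)$ is supported in the Berkovich Julia set $\sJ(f)$; once this is known, condition~(2) of Theorem~\ref{th:julia} applies and gives $\lim_{n\to\infty}\nu_n^a=\mu_{f,v}$ weakly on $\sP^1(\bC_v)$ at that place. So the real content is to establish the support condition~\eqref{eq:support}, and the only tool available for this is the dynamical Diophantine approximation result~\eqref{eq:diophantine} of Silverman and Szpiro--Tucker, which forces me to pass between the finitely many places of $k$.

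\medskip

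First I would set up the global product/sum structure. After enlarging the ground field, assume the coefficients of $a$ lie in $k$ (replacing $k$ by a finite extension does not affect the conclusion). For each $n$, the solutions of $f^n(w)=a(w)$ in $\bP^1$ are algebraic over $k$, and the ``heights'' that control their archimedean and non-archimedean sizes are governed by the product formula on $k$. The key quantitative input is the uniform-over-$v$ comparison, valid for $z,w\in\bP^1(\overline{k})$, between the chordal proximity $-\log[z,w]_v$ and local height contributions; summing over all places and using the product formula turns a statement about one place into a statement about the global height of the points $w$ with $f^n(w)=a(w)$. Concretely, the equidistribution already proved at the archimedean places (this is the classical Brolin--Lyubich--Freire--Ma\~n\'e type statement over $\bC$, which is the $K\cong\bC$ case of Theorem~\ref{th:julia}, or can be cited) pins down the archimedean contributions to the heights, and one is left needing an upper bound at the remaining non-archimedean place $v$.

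\medskip

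Next, for the fixed non-archimedean place $v$, I would argue by contradiction: suppose some subsequence $\nu_{n_j}^a$ converges weakly on $\sP^1(\bC_v)$ to a limit $\nu$ with $\supp\nu\not\subset\sJ(f)$. Then $\nu$ charges the Berkovich Fatou set, so a positive proportion of the preimage mass $w$ with $f^{n_j}(w)=a(w)$ accumulates in the Fatou set. By the classification of Fatou components and the fact that orbits in the Fatou set either land in a cycle of domains or are attracted to one, such points $w$ have forward orbits that do \emph{not} escape, i.e.\ they fail to be wandering, OR — if they are wandering — their orbits stay bounded away from $a$ in a way that contradicts~\eqref{eq:diophantine}: along the orbit one has $-\frac{1}{d^{n}}\log[f^{n}(w),a]_v$ bounded below by a positive constant, whereas~\eqref{eq:diophantine} says this quantity tends to $0$. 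Combining this lower bound at $v$ with the height identity from the previous step — the global sum of $\frac{1}{d^{n_j}}\log[f^{n_j}(w),a]_v$ over places must vanish up to lower-order terms by the product formula applied to the polynomial $f^{n_j}-a\cdot(\text{denominator})$ — yields a contradiction, since the archimedean terms have already been controlled and cannot compensate. Hence $\supp\nu\subset\sJ(f)$, and Theorem~\ref{th:julia}(2)$\Rightarrow$(1) finishes the proof.

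\medskip

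The main obstacle I anticipate is the last step: transferring~\eqref{eq:diophantine}, which is a statement about a single wandering point $z$, into a statement about the \emph{measures} $\nu_{n_j}^a$, whose mass may concentrate on points that are preperiodic (hence not wandering) or whose degrees/heights grow. One must handle preperiodic solutions separately — there are $O(d^{n/2})$ of them in a suitable sense, negligible against $d^n$ — and for the wandering ones, one needs a version of~\eqref{eq:diophantine} that is uniform enough (in the point $z$, with a controlled dependence on its height) to survive averaging; this is exactly where the Szpiro--Tucker arithmetic-geometry estimates, rather than Silverman's pointwise version, do the work. Packaging that uniformity, together with bookkeeping of the finitely many bad places and the archimedean normalization, is the technical heart of the argument.
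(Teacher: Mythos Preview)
Your proposal misidentifies the architecture of the argument. The paper's proof is \emph{purely local} at the single place $v$: it never invokes the product formula, never sums over places, and never needs archimedean equidistribution as an input. The adjective ``adelic'' in the statement refers only to the arithmetic hypotheses on $f$ and $a$, not to the method of proof.

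Here is what the paper actually does. Let $\nu$ be any weak limit of a subsequence $(\nu_{n_j}^a)$. The key estimate is~\eqref{eq:limsup} from Lemma~\ref{th:limsup}: for every $z\in\bP^1$,
\[
\limsup_{j\to\infty}\frac{1}{d^{n_j}}\log[f^{n_j}(z),a(z)]_v\ \le\ U_\nu(z).
\]
Now fix a \emph{test point} $z\in\bP^1(\overline{k})$ which is wandering under $f$ and, if $a$ is non-constant, does not lie in $a^{-1}(E(f))$. Then $a(z)\in\bP^1(\overline{k})\setminus E(f)$ is a constant target, and~\eqref{eq:diophantine} (applied with this constant target) gives that the left-hand side above equals $0$; hence $U_\nu(z)\ge 0$. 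Such points $z$ are dense in $\sP^1$, and $U_\nu$ is upper semicontinuous, so $U_\nu\ge 0$ on all of $\sP^1$. Lemma~\ref{th:canonical} then forces $\nu=\mu_f$. That is the entire argument.

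The gap in your plan is that you try to apply~\eqref{eq:diophantine} to the \emph{roots} $w$ of $f^{n_j}=a$, which vary with $j$; this is why you run into the obstacle of needing uniformity in the point and of separating preperiodic from wandering roots. The paper sidesteps all of this by applying~\eqref{eq:diophantine} at a fixed dense set of test points and letting the potential $U_\nu$ carry the information about $\nu$. No uniform version of~\eqref{eq:diophantine}, no height bookkeeping, and no appeal to condition~\eqref{eq:support} of Theorem~\ref{th:julia} is required; one verifies~\eqref{eq:weaklim} directly via the variational characterization of $\mu_f$.
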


 For another application 
 (quantitative equidistribution for non-exceptional algebraic constants)
 of the dynamical Diophantine approximation \eqref{eq:diophantine}
 to adelic dynamics, see \cite{OkuFekete}.

For general $K$, the equidistribution theorem for {\itshape constant} $a\in\bP^1\setminus E(f)$
is due to Brolin \cite{Brolin}, Lyubich \cite{Lyubich83}, Freire, Lopes and
Ma\~n\'e \cite{FLM83} for archimedean $K$ and due to Favre and Rivera-Letelier 
\cite[Th\'eor\`eme A]{FR09} for non-archimedean $K$.

\begin{theorem}\label{th:general}
 Let $K$ be an algebraically closed field of any characteristic
 and complete with respect
 to a non-trivial absolute value.
 Let $f\in K(z)$ be a rational function on $\bP^1$ of degree $d>1$, and
 $a\in K(z)$ be a constant function. Then
 $\lim_{n\to\infty}\nu_n^a=\mu_f$ weakly on $\sP^1$ if and only if
\begin{gather}
 a\in\bP^1(K)\setminus E(f).\label{eq:nonexceptional}
\end{gather}
\end{theorem}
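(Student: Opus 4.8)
The plan is to reduce Theorem~\ref{th:general} to the equivalences in Theorem~\ref{th:julia}.

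\emph{Necessity of \eqref{eq:nonexceptional}.} Suppose $a\in E(f)$. By the definition of $E(f)$ the set $F_a:=\bigcup_{m\in\bN}f^{-m}(a)$ is then a finite subset of $\bP^1$, and since $a$ is constant the measure $\nu_n^a$ of \eqref{eq:roots} is carried by $f^{-n}(a)\subset F_a$ for every $n$. As $F_a$ is closed, any weak limit $\nu$ of a subsequence of $(\nu_n^a)$ satisfies $\nu(F_a)\ge\limsup_n\nu_n^a(F_a)=1$, so $\nu$ is a probability measure supported on the finite set $F_a$. On the other hand $\mu_f$ carries no atom -- it has finite energy, being the equilibrium measure of $f$ (cf.\ Section~\ref{sec:facts}) -- so $\nu\neq\mu_f$, and \eqref{eq:general} fails. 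Hence $\lim_{n\to\infty}\nu_n^a=\mu_f$ forces $a\in\bP^1(K)\setminus E(f)$.

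\emph{Sufficiency.} Conversely, assume $a\in\bP^1(K)\setminus E(f)$. Applying Theorem~\ref{th:julia} with $n_j=j$, the equidistribution \eqref{eq:general} reduces to its condition~(ii): every weak limit $\nu$ of a subsequence of $(\nu_n^a)$ has $\supp\nu\subset\sJ(f)$. So fix such a $\nu$ and a point $\cS_0\in\sP^1\setminus\sJ(f)$, and choose an open neighbourhood $U_1$ of $\cS_0$ contained in $\sP^1\setminus\sJ(f)$. Since $\sJ(f)$ is totally invariant, $\bigcup_{n\in\bN}f^n(U_1)\subset\sP^1\setminus\sJ(f)$; in particular this forward orbit omits the whole of $\sJ(f)$, which is non-polar as it carries the finite-energy measure $\mu_f$. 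The task is then to show that this omission forces the normalized root counts to vanish, i.e.\ that for a possibly smaller neighbourhood $U$ of $\cS_0$ one has $\nu_n^a(U)=d^{-n}\#\bigl(f^{-n}(a)\cap U\bigr)\to0$; granting this, $\nu(U)=0$ by weak convergence of a subsequence, so $\cS_0\notin\supp\nu$ and condition~(ii) holds. (If $a\in\sJ(f)$ this is trivial, since then $f^{-n}(a)\subset\sJ(f)$ is disjoint from $U_1$ for every $n$; the real content is the case $a\in(\sP^1\setminus\sJ(f))\setminus E(f)$, where each $\nu_n^a$ is supported in the Fatou set and yet its limit must concentrate on $\sJ(f)$.)

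\emph{Main obstacle.} The crux is exactly this last implication, ``forward omission of a non-polar set near $\cS_0$ $\Rightarrow$ no backward mass of $a$ accumulates at $\cS_0$'', which is the no-escape-of-mass phenomenon: the backward orbits of a non-exceptional point converge to $\sJ(f)$, even $d^{-n}$-weighted. Over $\bC$ this is the classical normality (Montel) input underlying the Brolin--Lyubich theorem, but since $K$ may have positive characteristic one must argue through the Berkovich-space description of $\sJ(f)$ and its potential theory recalled in Section~\ref{sec:facts} -- for instance by bounding, near $\cS_0$, the potentials of $\nu_n^a$ below by the equilibrium potential of the non-polar omitted set. The non-atomicity of $\mu_f$ invoked in the necessity part belongs to the same package of facts and requires no extra input.
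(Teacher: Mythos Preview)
Your necessity argument is essentially correct, but one claim needs care: $\mu_f$ \emph{can} carry atoms when $K$ is non-archimedean (for instance $\mu_f=\delta_{\cS_{\can}}$ when $f$ has good reduction). What is true, and sufficient here since $F_a\subset\bP^1$, is that $\mu_f$ has no atoms \emph{in $\bP^1$} (by continuity of $g_F$, cf.\ Section~\ref{sec:facts}); then $\mu_f(F_a)=0\neq 1=\nu(F_a)$.

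The sufficiency argument, however, has a genuine gap. Your reduction via Theorem~\ref{th:julia} to condition~(ii) is logically sound, and the case $a\in\sJ(f)$ is correctly handled. But for $a\in\sF(f)\setminus E(f)$ your ``Main obstacle'' paragraph is an acknowledgement of the gap rather than its resolution: you identify what must be shown (that no backward mass of $a$ survives in the Fatou set) and gesture at a potential-theoretic comparison with ``the equilibrium potential of the non-polar omitted set'', but this never becomes an argument. There is no Montel theorem available in positive characteristic or in the general Berkovich setting, and the suggested comparison is not a step one can simply invoke.

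The paper's route is different and does \emph{not} verify condition~(ii) directly. It works instead with the variational criterion of Lemma~\ref{th:canonical}: it suffices that $(\supp\nu)\cap\{U_\nu<0\}=\emptyset$. The point is that, by \eqref{eq:limsup} together with the vanishing integral in Lemma~\ref{th:const}, one has $\limsup_j d^{-n_j}\log[f^{n_j}(\cdot),a]_{\can}\le U_\nu<0$ on $\{U_\nu<0\}$, so $f^{n_j}\to a$ on $\{U_\nu<0\}\cap\bP^1$. This forces the Fatou component $W\ni a$ to be \emph{cyclic}, and now the classification of cyclic Berkovich Fatou components (Theorem~\ref{th:classification}) does the work: in the attracting case the exponential rate in \eqref{eq:quantitative} would make $a$ superattracting with $\deg_a f^p=d^p$, i.e.\ $a\in E(f)$, a contradiction; in the singular case $\deg(f^p:W\to W)=1$, so $f^{n}(\cdot)=a$ has at most one simple root in $W$, whence $\nu(U)=0$ on each component $U$ of $\{U_\nu<0\}$. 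The decisive input is thus dynamical (the classification theorem), not a normality/omitted-values argument.
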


In Section \ref{sec:general}, 
we give a proof of Theorem \ref{th:general},
the fundamental equivalence between \eqref{eq:general} and \eqref{eq:nonexceptional}
for constant $a$,
based on the variational argument and on the classification of cyclic Berkovich Fatou components
of $f$ (see Theorem \ref{th:classification}).

For general $K$ of characteristic $0$, 
the equidistribution theorem for {\itshape moving} targets
is due to Lyubich \cite[Theorem 3]{Lyubich83} (see also Tortrat \cite[\S IV]{Tortrat87}) 
for archimedean $K$ 
and due to Favre and Rivera-Letelier \cite[Th\'eor\`eme B]{FR09} for non-archimedean $K$
of characteristic $0$.  
 
\begin{theorem}\label{th:moving}
 Let $K$ be an algebraically closed field of characteristic $0$
 and complete with respect
 to a non-trivial absolute value.
 Let $f\in K(z)$ be a rational function on $\bP^1$ of degree $d>1$. Then 
 for every non-constant rational function $a\in K(z)$ on $\bP^1$, 
 $\lim_{n\to\infty}\nu_n^a=\mu_f$ weakly on $\sP^1$.
\end{theorem}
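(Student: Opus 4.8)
The plan is to verify condition $(2)$ of Theorem \ref{th:julia}. Applying that theorem with $(n_j)=(j)$, it suffices to show that every weak limit $\nu$ of a subsequence of $(\nu_n^a)$ satisfies $\supp\nu\subset\sJ(f)$; then $\nu=\mu_f$ by Theorem \ref{th:julia}, and since this characterizes the limit along every subsequence, $\lim_{n\to\infty}\nu_n^a=\mu_f$ weakly on $\sP^1$. Since $\sP^1\setminus\sJ(f)$ is the disjoint union of the Berkovich Fatou components of $f$, it is enough to see that $\nu$ vanishes on each Fatou component $V$, and for that it suffices to prove the following: every $\cS_0\in V$ has an open neighborhood $V'$ with $\overline{V'}\subset V$ such that, counting multiplicities,
\[
 \#\{w\in V':f^n(w)=a(w)\}=O(1)\qquad(n\to\infty);
\]
indeed for $V'$ open this forces $\nu(V')\le\liminf_j\nu_{n_j}^a(V')\le\liminf_j(d^{n_j}+\deg a)^{-1}\cdot O(1)=0$.

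To prove the counting bound I would invoke the classification of cyclic Berkovich Fatou components in characteristic $0$ (Theorem \ref{th:classification}; and, when $K\cong\bC$, the Fatou--Sullivan classification): the periodic cycle $W_0,\dots,W_{p-1}$ of Fatou components into which $V$ eventually maps is either $(\mathrm{i})$ an immediate basin of an attracting (over $\bC$, also parabolic) periodic cycle, or $(\mathrm{ii})$ a singular domain --- a Rivera domain in the non-archimedean case, a rotation (Siegel/Herman) domain over $\bC$ --- on which $f^p$ acts \emph{bijectively}. After shrinking $V'$, decomposing $f^n|_{V'}$ through a fixed $f^{n_0}$ into the cycle, and covering $\overline{V'}$ by finitely many small balls, the problem reduces to bounding $\#\{w\in B:f^n(w)=a(w)\}$ for a small ball $B$ sitting in one of the $W_i$; write $\iota(n)$ for the index with $f^n(B)\subset W_{\iota(n)}$ for $n$ large.

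In case $(\mathrm{i})$, $f^n|_B$ converges uniformly to the (periodic in $n$) sequence of attracting cycle points $\alpha_{\iota(n)}$; since $a$ is non-constant, either $a(\overline B)$ stays away from the finite set $\{\alpha_0,\dots,\alpha_{p-1}\}$ --- and then there are no solutions in $B$ for $n$ large --- or $B$ is centered at a point $\tilde s$ with $a(\tilde s)=\alpha_j$ for a single $j$, and only the $n$ with $\iota(n)=j$ contribute. In the latter case, in a coordinate $t$ vanishing at $\alpha_j$ one has $\phi_n:=t\circ f^n|_B$ with $\sup_B|\phi_n|\to0$, whereas $\psi:=t\circ a|_B$ is a \emph{fixed, non-zero} analytic function with $\psi(\tilde s)=0$; the hypothesis that $a$ is non-constant is used here precisely, to guarantee $\psi\not\equiv0$. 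Then $\|\phi_n-\psi\|_B$ is eventually $\ge\tfrac12\|\psi\|_B>0$, while the Newton polygon of the fixed $\psi$ has bounded degree and the contribution of $\phi_n$ is uniformly small, so the number of zeros of $\phi_n-\psi$ in $B$ --- read off from its Newton polygon --- is bounded in $n$; over $\bC$ this is the argument principle applied to $\phi_n-\psi$. In case $(\mathrm{ii})$, a solution $w\in B\subset W_i$ of $f^{pn}(w)=a(w)$ has $a(w)=f^{pn}(w)\in W_i$, hence $w=(f^{pn}|_{W_i})^{-1}(a(w))$: it is a fixed point of the analytic germ $(f^{pn}|_{W_i})^{-1}\circ a$, of degree $\le\deg a$ onto its image in $W_i$, and so has $O(1)$ fixed points; the residues $n\not\equiv0\pmod p$ are reduced to this by composing with $f,\dots,f^{p-1}$ (of bounded degree).

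The main obstacle is case $(\mathrm{i})$ when $f^n|_B$ has \emph{unbounded} local degree, which genuinely occurs for $B$ near a superattracting cycle of full multiplicity, i.e.\ near $E(f)$: there the naive bound on the number of roots of $f^n(\cdot)=a(\cdot)$ in $B$ is $\deg(f^n|_B)\to\infty$, and the accumulation of $\asymp d^n$ roots at such a point is exactly what would happen for the excluded constant target $a\equiv\alpha_j$. Quantifying that a non-constant $a$ (indeed any $a$ not identically an exceptional value) prevents this is precisely the Newton-polygon estimate above, and it is also the step where the characteristic-$0$ hypothesis is essential, via the classification of cyclic Fatou components; in positive characteristic the statement can fail, because of the wilder structure of singular domains.
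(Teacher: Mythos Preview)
Your plan to verify condition~(ii) of Theorem~\ref{th:julia} is sound, and your treatment of the attracting case~(i) via a Newton-polygon estimate is essentially correct. But the argument has two genuine gaps in the non-archimedean setting.

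First, you assume every Fatou component is preperiodic. Over $\bC$ this is Sullivan's theorem, but for non-archimedean $K$ of characteristic~$0$ wandering Berkovich Fatou components do exist (Benedetto), and your case split does not address them.

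Second, the singular-domain case~(ii) is where the real difficulty lies, and your argument there does not yield a bound. Rewriting solutions of $f^{pn}(w)=a(w)$ as fixed points of $g_n:=(f^{pn}|_{W_i})^{-1}\circ a$ is a tautology---those fixed points are exactly the roots you set out to count. The assertion that a map ``of degree $\le\deg a$ onto its image'' has $O(1)$ fixed points is false for analytic maps on non-archimedean disks: for small $\epsilon$ the map $g(z)=z+\epsilon p(z)$ is an injective isometry of the disk with as many fixed points as $p$ has zeros. What controls the count is not the degree of $g_n$ but the structure of the whole family $\{f^{pn}|_{W_i}\}_n$: over $\bC$ one linearizes ($h\circ f^{k_0}=\lambda h$) and reduces to the zeros of the single fixed function $h\circ f^{n_N}$; for non-archimedean $K$ the analogous tool is Rivera-Letelier's iterative logarithm (Theorem~\ref{th:uniformization}), which embeds the iterates in a continuous $\bZ_p$-action $T^\omega$ and reduces the count to the finitely many zeros of the fixed function $T_*\circ T^{\omega_0}$. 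This, not the classification Theorem~\ref{th:classification} (which holds in every characteristic), is where the characteristic-$0$ hypothesis is actually used.

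The paper sidesteps both problems by working not on all of $\sF(f)$ but on the smaller set $\{U_\nu<0\}$. Lemma~\ref{th:negativeFatou} gives $\{U_\nu<0\}\subset\sF(f)$, and Lemma~\ref{th:hartogsunif} supplies the decisive extra information that $f^{n_j}\to a$ locally uniformly there. This recurrence forces $a(\{U_\nu<0\})\subset\cE_f$, so wandering and attracting components are ruled out automatically and only the quasiperiodicity case remains---exactly where the iterative logarithm applies. The potential-theoretic restriction to $\{U_\nu<0\}$ is thus not cosmetic: it is what produces the convergence $f^{n_j}\to a$ needed to invoke Theorem~\ref{th:uniformization}.
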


In Section \ref{sec:general}, we also describe
how the variational argument together with the dynamical
uniformization on the quasiperiodicity domain 
$\cE_f$ (see Theorem \ref{th:uniformization})
yields Theorem \ref{th:moving}. This is foundational in our study of
the problem on the density of the classical repelling periodic points in 
the classical Julia set in non-archimedean dynamics \cite{OkuLog}.
Our proof of Theorem \ref{th:moving}
complements the original one given in \cite[\S3.4]{FR09}
(see also Remark \ref{th:incoincidence}).

In Section \ref{sec:example}, we discuss the case where $f$ and $a$ are polynomials,
and compute a concrete example.

We conclude this section with an open problem.

\begin{question}
Let $K$ be an algebraically closed field of positive characteristic and
complete with respect to a non-trivial absolute value.
Let $f\in K(z)$ be a rational function on $\bP^1$ of degree $d>1$.
Determine concretely all rational functions $a\in K(z)$ on $\bP^1$ 
which are {\itshape exceptional for} $f$ in that
the equidistribution \eqref{eq:general} does not hold.
\end{question}

We hope condition \eqref{eq:hyperbolic}
will be helpful for studying this problem.

\section{Background}\label{sec:facts}
For the foundations of potential theory on $\sP^1$, see 
\cite[\S 5 and \S 8]{BR10}, \cite[\S 7]{FJbook}, 
\cite[\S 1-\S 4]{Jonsson12}, \cite[Chapter III]{Tsuji59}. 
For a potential theoretic study of dynamics on $\sP^1$,
see \cite[\S 10]{BR10}, \cite[\S 3]{FR09}, \cite[\S 5]{Jonsson12}, \cite[Chapitre VIII]{BM01}. 
See also \cite{Benedetto10, Juan03} including non-archimedean dynamics.

Let $K$ be an algebraically closed field of any characteristic and complete with respect
to a non-trivial absolute value $|\cdot|$. Under the assumption that $K$ is algebraically closed,
$|K|:=\{|z|:z\in K\}$ is dense in $\bR_{\ge 0}$. We will say $K$ to be {\itshape non-archimedean} if
the strong triangle inequality $|z-w|\le\max\{|z|,|w|\}$ holds for all $z,w\in K$.
This in particular implies that the equality $|z-w|=\max\{|z|,|w|\}$ 
holds if $|z|\neq|w|$. When $K$ is non-archimedean, 
for every $a,b\in K$ and every $r\ge 0$, 
$\{z\in K:|z-a|\le r\}=\{z\in K:|z-b|\le r\}$
if $|b-a|\le r$, and the diameters of these sets with respect to $|\cdot|$ equal $r$.
If $K$ is not non-archimedean, then
$K$ is said to be archimedean. Under the assumption that $K$ is algebraically closed,
$K$ is archimedean if and only if $K\cong\bC$ as valued fields.

Let $\|\cdot\|$ be the maximum norm
on $K^2$ if $K$ is non-archimedean, and 
the Euclidean norm on $\bC^2$ if $K$ is archimedean $(\cong\bC)$. 
Put $p\wedge q:=p_0q_1-p_1q_0$ for $p=(p_0,p_1),q=(q_0,q_1)\in K^2$; 
let $\pi$ be the canonical projection $K^2\setminus\{0\}\to\bP^1=\bP^1(K)$,
and put $\infty:=\pi(0,1)$. The normalized chordal distance on $\bP^1$ is 
\begin{gather*}
 [z,w]:=\frac{|p\wedge q|}{\|p\|\cdot\|q\|}\in[0,1],
\end{gather*}
where $p\in\pi^{-1}(z),q\in\pi^{-1}(w)$. 
We usually 
identify $K$ with $\bP^1\setminus\{\infty\}$ by the injection $z\mapsto \pi(1,z)$
on $K$.

For non-archimedean $K$, the Berkovich projective line $\sP^1 =\sP^1(K)$
is defined as an analytic space in the sense of Berkovich; 
see Berkovich's original monograph \cite{Berkovichbook}, 
as well as \cite[\S1, \S2]{BR10} for $\sP^1$. 
For archimedean $K$, we have $\sP^1 =\bP^1$.

\begin{fact}[Berkovich's classification of points in $\sP^1$]
 Suppose that $K$ is non-archimedean. A subset $\cB=\{z\in K:|z-a|\le r\}$ in $K$ 
 for some $a\in K$ and some $r=:\diam(\cB)\ge 0$
 is called a ($K$-closed) {\itshape disk}.
 Any two intersecting disks $\cB,\cB'$ satisfy either
 $\cB\subset\cB'$ or $\cB\supset\cB'$. 

 A point $\cS$ in the Berkovich projective line $\sP^1$ is either $\infty$ or
 is a cofinal class (or tail) of non-increasing and nested sequences of disks $(\cB_j)$. Here,
 two non-increasing and nested sequences of disks $(\cB_j),(\cB'_k)$ are cofinally equivalent 
 either if (i) $\bigcap_j\cB_j=\bigcap_k\cB'_k\neq\emptyset$ or 
 if (ii) $\bigcap_j\cB_j=\bigcap_k\cB'_k=\emptyset$,
 for any $j\in\bN$, $\cB_j$ contains $\cB'_N$ for some $N\in\bN$,
 and for any $k\in\bN$, $\cB'_k$ contains $\cB_{N'}$ for some $N'\in\bN$.
 A cofinal class of non-increasing and nested sequence of disks $(\cB_j)$ 
 is identified with the disk $\cB=\bigcap_{j\in\bN}\cB_j$ if it is non-empty.
 The projective line $\bP^1$ is regarded as the set of all
 disks $\cB$ with $\diam(\cB)=0$ and the point $\infty$ 
 (cf.\ \cite[\S 1]{BR10},  \cite[\S 6.1]{Benedetto10}, \cite[\S 2]{FR09}). 
\end{fact}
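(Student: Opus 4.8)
The plan is to establish the classification by identifying, as sets, $\sA^1:=\sP^1\setminus\{\infty\}$ with the collection of cofinal classes of non-increasing nested sequences of disks, following \cite{Berkovichbook} and \cite[\S1--\S2]{BR10}, and then treating the extra point $\infty=\pi(0,1)$ separately. First I would dispose of the elementary nesting assertion: if $z\in\cB\cap\cB'$ with, say, $\diam\cB\le\diam\cB'$, then $\cB=\{w\in K:|w-z|\le\diam\cB\}\subset\{w\in K:|w-z|\le\diam\cB'\}=\cB'$ by the strong triangle inequality, where I have used that a disk equals $\{w:|w-z|\le\diam\}$ for \emph{any} of its points $z$.

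Next I would recall that, as a set, $\sA^1$ is the space of multiplicative seminorms $\phi\mapsto[\phi]_\cS$ on $K[T]$ whose restriction to $K$ is $|\cdot|$, and that every such seminorm is automatically non-archimedean, $[\phi+\psi]_\cS\le\max\{[\phi]_\cS,[\psi]_\cS\}$ (the standard binomial argument, using $|m\cdot 1|\le 1$ for $m\in\bZ$). To each non-increasing nested sequence of disks $(\cB_j)$, with $\cB_j=\{w:|w-a_j|\le r_j\}$, I would attach the seminorm $[\phi]_{(\cB_j)}:=\lim_j\sup_{z\in\cB_j}|\phi(z)|$; the limit exists because $j\mapsto\sup_{\cB_j}|\phi|$ is non-increasing, and one has $\sup_{\cB_j}|\phi|=\max_i|c^{(j)}_i|r_j^i$ for $\phi=\sum_i c^{(j)}_i(T-a_j)^i$, by the non-archimedean maximum modulus principle and the density of $|K|$ in $\bR_{\ge0}$. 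I would then check that $[\cdot]_{(\cB_j)}$ is indeed a multiplicative seminorm restricting to $|\cdot|$ on $K$ (multiplicativity being that of the ``Gauss norm'' of a disk) and that cofinally equivalent sequences induce the same seminorm, so that one obtains a well-defined map from cofinal classes into $\sA^1$.

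The substance of the proof is the converse, surjectivity: every $\cS\in\sA^1$ arises in this way. Given $\cS$, I would put $r_a:=[T-a]_\cS\ge 0$ for $a\in K$; from $(T-a)-(T-b)=b-a$ and the non-archimedean triangle inequality for $[\cdot]_\cS$ one gets $|a-b|\le\max\{r_a,r_b\}$, so the family of closed disks $\{\overline{D}(a,r_a):a\in K\}$, with $\overline{D}(a,r):=\{w:|w-a|\le r\}$, is totally ordered by inclusion. Selecting from it a non-increasing cofinal sequence $(\cB_j)$, so that $\diam\cB_j$ decreases to $\inf_a r_a$, I would prove $[\cdot]_\cS=[\cdot]_{(\cB_j)}$: factoring $\phi=c\prod_i(T-\alpha_i)$ over the algebraically closed field $K$ and invoking multiplicativity reduces both inequalities to the case $\phi=T-\alpha$, where $[T-\alpha]_\cS=r_\alpha\le\sup_{\cB_j}|T-\alpha|$ for every $j$ is immediate, while the reverse inequality $r_\alpha\ge\lim_j\sup_{\cB_j}|T-\alpha|$ is precisely the assertion that the $r_a$ cannot be shrunk further. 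I expect this reverse inequality — resting on the maximum modulus principle on $K$-closed disks — to be the main obstacle; granting it, the map of the previous paragraph is a bijection, and matching the two cofinal-equivalence clauses (i) and (ii) with the alternative ``$\bigcap_j\cB_j\neq\emptyset$'' versus ``$\bigcap_j\cB_j=\emptyset$'' is then a bookkeeping exercise with the strong triangle inequality, showing that a class with nonempty intersection is identified with the disk $\bigcap_j\cB_j$.

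Finally I would pass from $\sA^1$ to $\sP^1$: the Berkovich projective line is obtained by gluing $\sA^1$ with a second copy in the coordinate $1/T$ along the loci corresponding to disks not containing $0$, respectively not containing $\infty$, and the unique point of $\sP^1$ lying outside the $T$-chart is $\infty=\pi(0,1)$. Since diameter-zero disks $\{a\}$ correspond to the multiplicative seminorms $\phi\mapsto|\phi(a)|$, i.e.\ to the evaluation maps at classical points, the classical $\bP^1$ embeds into $\sP^1$ exactly as the set of disks $\cB$ with $\diam\cB=0$ together with $\infty$, as asserted.
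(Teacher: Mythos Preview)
The paper does not give a proof of this Fact: it is stated as background, with pointers to \cite[\S1]{BR10}, \cite[\S6.1]{Benedetto10}, and \cite[\S2]{FR09}, and is used purely as a working description of points of $\sP^1$. Your proposal is a correct outline of the standard argument found in those references (essentially Berkovich's original construction as reproduced in \cite[\S1--\S2]{BR10}): identify $\sA^1$ with multiplicative seminorms on $K[T]$ extending $|\cdot|$, attach to a nested sequence of disks the sup-norm seminorm, and for the converse recover the disks $\overline{D}(a,[T-a]_\cS)$ from a given seminorm $\cS$. The step you flag as the main obstacle, the inequality $r_\alpha\ge\lim_j\sup_{\cB_j}|T-\alpha|$, is in fact routine once one uses that the family $\{\overline{D}(a,r_a)\}$ is totally ordered: either $\alpha$ lies in every $\cB_j$, in which case $\sup_{\cB_j}|T-\alpha|=\diam\cB_j\to\inf_a r_a\le r_\alpha$, or $\alpha\notin\cB_{j_0}$ for some $j_0$, forcing $\cB_{j_0}\subset\overline{D}(\alpha,r_\alpha)$ and hence $\sup_{\cB_{j_0}}|T-\alpha|=|a_{j_0}-\alpha|\le r_\alpha$. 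Injectivity of your map (non-equivalent sequences give distinct seminorms) is not stated separately, but it falls out of the same argument: any sequence representing $\cS$ is cofinal in $\{\overline{D}(a,r_a)\}$, so two such sequences are cofinally equivalent.
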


 Let $\Omega_{\can}$ be the Fubini-Study area element 
 on $\bP^1\cong\sP^1$ normalized as $\Omega_{\can}(\bP^1)=1$ for archimedean $K\cong\bC$,
 and the Dirac measure $\delta_{\cS_{\can}}$ on $\sP^1$ at 
 the Gauss (or canonical) 
 point $\cS_{\can}\in\sP^1$ determined by the disk $\{z\in K:|z|\le 1\}$ 
 for non-archimedean $K$. 

\begin{definition}[the generalized Hsia kernel]
 Suppose that $K$ is non-archimedean. 
 To each cofinal class $\cS$ of a non-increasing and nested 
 sequence of disks $(\cB_j)$, 
 set $\diam(\cS):=\lim_{j\to\infty}\diam(\cB_j)$.
 Then the function $\diam(\cdot)$ is continuous on $\sP^1\setminus\{\infty\}$.

 For cofinal classes $\cS,\cS'$ of non-increasing and nested sequences of disks 
 $(\cB_j),(\cB'_k)$, 
 respectively, let $\cS\wedge\cS'\in\sP^1$ be the smallest cofinal class 
 of non-increasing and nested sequence of disks $(\cB''_{\ell})$ such that
 for every $\ell\in\bN$, $\cB''_{\ell}$ contains $\cB_N\cup\cB'_{N'}$ for some $N,N'\in\bN$.
 Here a cofinal class of $(\cB''_{\ell})$ is said to be smaller than that of $(\cB'''_m)$ if
 for every $m\in\bN$, $\cB'''_m$ contains $\cB''_{N''}$ for some $N''\in\bN$.

For each $w\in\bP^1\setminus\{\infty\}$, 
 the function $|\cdot-w|:=\diam(\cdot\wedge w)$ on $\sP^1\setminus\{\infty\}$ 
 is a unique continuous extension of $|\cdot-w|$ on $\bP^1\setminus\{\infty\}$.
 We denote $|\cdot-0|$ by $|\cdot|$ in the case $w=0$. 

 The generalized Hsia kernel $[\cS,\cS']_{\can}$
 on $\sP^1$ with respect to the Gauss point $\cS_{\can}$
 is defined as
 \begin{gather*}
  [\cS,\cS']_{\can}:=\frac{\diam(\cS\wedge\cS')}{\max\{1,|\cS|\}\max\{1,|\cS'|\}}\in[0,1]
 \end{gather*} 
 for $\cS,\cS'\in\sP^1\setminus\{\infty\}$,
 $[\cS,\infty]_{\can}:=1/\max\{1,|\cS|\}$ for $\cS\in\sP^1\setminus\{\infty\}$,
 and $[\infty,\infty]_{\can}:=[\infty,\infty]=0$ 
 (see \cite[\S4]{BR10}, \cite[\S2.4]{FR09}). 

 By convention, for archimedean $K$, $[z,w]_{\can}$ is defined by $[z,w]$.
\end{definition}

 \begin{fact}\label{th:separately}
 The extension $[\cS,\cS']_{\can}$ is upper semicontinuous on $\sP^1\times\sP^1$,
 continuous nowhere in the diagonal of
 $(\sP^1\setminus\bP^1)\times(\sP^1\setminus\bP^1)$
 (indeed, $[\cS,\cS]_{\can}$ is continuous nowhere on $\sP^1\setminus\bP^1$), but
 continuous elsewhere on $\sP^1\times\sP^1$. On the other hand, $[\cS,\cS']_{\can}$
 is separately continuous in each variable, and
 vanishes if and only if $\cS=\cS'\in\bP^1$ (see \cite[Proposition 4.10]{BR10}).
 \end{fact}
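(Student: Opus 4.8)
Since $\sP^1=\bP^1$ and $[\cdot,\cdot]_{\can}=[\cdot,\cdot]$ is the (continuous) chordal distance when $K$ is archimedean, the statement is trivial in that case, so I assume $K$ non-archimedean; it is \cite[Proposition 4.10]{BR10}, and I sketch the argument I would give. The inputs are: $\diam(\cdot)$, and hence $\cS\mapsto\max\{1,|\cS|\}$, are continuous on $\sP^1\setminus\{\infty\}$, the latter being $\ge 1$ there; every $\cS\in\sP^1\setminus\bP^1$ has $\diam(\cS)>0$ (for a type IV point, because the defining nested disks cannot have diameters tending to $0$, as $K$ is complete); $|K|$ is dense in $\bR_{\ge 0}$ and $\bP^1$ is dense in $\sP^1$; and $[\cS,\cS']_{\can}$ is invariant under the isometry $\cS\mapsto 1/\cS$, which will let me move the point $\infty$ out of the way. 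Since $\cS,\cS'\subseteq\cS\wedge\cS'$, at finite points $[\cS,\cS']_{\can}=\diam(\cS\wedge\cS')/(\max\{1,|\cS|\}\max\{1,|\cS'|\})$ vanishes iff $\cS\wedge\cS'$ is a single classical point iff $\cS=\cS'\in\bP^1$, and $[\cS,\infty]_{\can}=1/\max\{1,|\cS|\}>0$ for $\cS\ne\infty$, $[\infty,\infty]_{\can}=0$; so the vanishing locus is exactly the classical diagonal. For $\cS_0\in\sP^1\setminus\bP^1$ one has $[\cS_0,\cS_0]_{\can}=\diam(\cS_0)/\max\{1,|\cS_0|\}^2>0$ while $[z,z]_{\can}=0$ for every classical $z$; by density of $\bP^1$, every neighbourhood of $(\cS_0,\cS_0)$, and every neighbourhood of $\cS_0$ on the diagonal, meets the classical diagonal, where the kernel vanishes, so $[\cdot,\cdot]_{\can}$ is discontinuous at $(\cS_0,\cS_0)$ and $\cS\mapsto[\cS,\cS]_{\can}$ is discontinuous at $\cS_0$. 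This yields both discontinuity assertions.

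The geometric heart is the description of $\cS\wedge\cS'$ (the smallest disk, or cofinal class, containing both) as the confluence of the geodesics $[\cS,\infty]$ and $[\cS',\infty]$ in the tree $\sP^1$, i.e.\ as the median of $\cS,\cS',\infty$. From this I would extract three facts. First, if $\cS$ and $\cS'$ both lie strictly below a point $\xi\ne\infty$, then $\cS\wedge\cS'$ lies in the closure of the connected component $U_\xi$ of $\sP^1\setminus\{\xi\}$ not containing $\infty$, and $\diam(\cS\wedge\cS')\le\diam(\xi)$; indeed, for $\xi$ of type II or III, say the disk $\{|z-a|\le r\}$, $U_\xi$ is the open Berkovich ball $\{\cS:|\cS-a|<r\}$, every point of which has diameter $<r$, and the smallest disk containing two subsets of $\{|z-a|<r\}$ again lies in $\{|z-a|<r\}$. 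Second, if $\cS_0\ne\cS_0'$ are both finite then $\cD_0:=\cS_0\wedge\cS_0'$ is of type II or III (a classical or type IV point has nothing strictly below it); if moreover $\cS_0,\cS_0'\ne\cD_0$, then $\cD_0$ is of type II and $\cS_0,\cS_0'$ lie in two distinct components $U_1,U_2$ of $\sP^1\setminus\{\cD_0\}$, neither containing $\infty$, with $\cS\wedge\cS'\equiv\cD_0$ for all $(\cS,\cS')\in U_1\times U_2$. Third, $\diam(\cS\wedge\cS')\ge\max\{\diam(\cS),\diam(\cS')\}$ always, and if $\cS,\cS'$ both lie in a disk $\{|z-a|\le\rho\}$ then so does $\cS\wedge\cS'$; in particular, near $(\infty,\cS_0')$ with $\cS_0'$ finite one has $\cS\cup\cS'\subseteq\{|z|\le|\cS|\}$ once $|\cS|$ is large, whence $\diam(\cS\wedge\cS')=|\cS|$ and $[\cS,\cS']_{\can}=1/\max\{1,|\cS'|\}$.

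With these I would conclude as follows. \emph{Upper semicontinuity on $\sP^1\times\sP^1$.} At $(\cS_0,\cS_0')$ with both coordinates finite, given $\varepsilon>0$ pick (using density of $|K|$) a type II or III point $\xi$ on the geodesic from $\cD_0:=\cS_0\wedge\cS_0'$ to $\infty$ with $\diam(\cD_0)<\diam(\xi)<\diam(\cD_0)+\varepsilon$; then $U_\xi\times U_\xi$ is a neighbourhood of $(\cS_0,\cS_0')$, and the first fact together with continuity of $\max\{1,|\cdot|\}$ gives $\limsup_{(\cS,\cS')\to(\cS_0,\cS_0')}[\cS,\cS']_{\can}\le[\cS_0,\cS_0']_{\can}$; near $(\infty,\cS_0')$ (and $(\cS_0',\infty)$) with $\cS_0'$ finite the third fact gives $[\cS,\cS']_{\can}=1/\max\{1,|\cS'|\}\to[\infty,\cS_0']_{\can}$; and $(\infty,\infty)$ reduces to $(0,0)$ by the $\cS\mapsto1/\cS$ invariance. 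In particular $[\cdot,\cdot]_{\can}$ is continuous wherever it vanishes, i.e.\ on the classical diagonal, by nonnegativity. \emph{Continuity off the diagonal of $(\sP^1\setminus\bP^1)^2$.} It remains to treat $\cS_0\ne\cS_0'$. If both are finite and $\cS_0,\cS_0'\ne\cD_0$, the second fact shows $[\cS,\cS']_{\can}$ equals $\diam(\cD_0)/(\max\{1,|\cS|\}\max\{1,|\cS'|\})$ on a neighbourhood, hence is continuous there; if $\cS_0=\cD_0$ (so $\cS_0\in\sP^1\setminus\bP^1$ and $\cS_0'$ lies strictly below it), or symmetrically $\cS_0'=\cD_0$, then u.s.c.\ gives the $\limsup$ bound while $\diam(\cS\wedge\cS')\ge\diam(\cS)$ together with continuity of $\diam$ and of $\max\{1,|\cdot|\}$ gives the matching $\liminf$ bound; the cases with one coordinate $\infty$ are $(\infty,\cS_0')$ or $(\cS_0',\infty)$ with $\cS_0'$ finite, already settled, the estimate there being two-sided. \emph{Separate continuity.} For fixed $\cS'\ne\infty$, $\cS\mapsto\cS\wedge\cS'$ is continuous for the Berkovich topology and $\diam$ is continuous on $\sP^1\setminus\{\infty\}$, so $\cS\mapsto[\cS,\cS']_{\can}$ is continuous there, and by the third fact it is locally constant near $\infty$; for $\cS'=\infty$ it is $1/\max\{1,|\cS|\}$, continuous on all of $\sP^1$. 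Symmetry finishes it.

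The main obstacle is not any single estimate but the tree geometry of the meet operation: verifying that $\cS\wedge\cS'$ is exactly the confluence of the two geodesics to $\infty$, that the component of $\sP^1\setminus\{\xi\}$ below a type II or III point $\xi$ consists precisely of points of diameter $<\diam(\xi)$, and the bookkeeping needed to treat type IV points and the point $\infty$ uniformly; everything else reduces to continuity of $\diam$ and $\max\{1,|\cdot|\}$ on $\sP^1\setminus\{\infty\}$ together with density of $\bP^1$ in $\sP^1$.
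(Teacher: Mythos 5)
The paper offers no proof of this Fact (it is quoted from \cite[Proposition 4.10]{BR10}), so your sketch stands on its own; its architecture --- explicit local formulas for $\diam(\cS\wedge\cS')$ on product neighbourhoods, plus density of $\bP^1$ for the discontinuity assertions --- is the right one. But one of your declared inputs is false and is genuinely used: $\diam(\cdot)$ is \emph{not} continuous on $\sP^1\setminus\{\infty\}$ for the Berkovich topology, only upper semicontinuous. Every neighbourhood of a point of $\sP^1\setminus\bP^1$ contains classical points, where $\diam=0$, so lower semicontinuity fails at every such point --- indeed this failure \emph{is} the diagonal discontinuity you prove by density, so that part of your own argument contradicts the input (the paper's Definition contains the same overstatement, but it cannot be relied on here). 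The two places where this matters are: (i) the ``matching liminf bound'' at comparable off-diagonal points ($\cS_0=\cD_0$ or $\cS_0'=\cD_0$), where you pass from $\diam(\cS\wedge\cS')\ge\diam(\cS)$ to a lower bound on the limit using ``continuity of $\diam$'' --- but $\liminf_{\cS\to\cS_0}\diam(\cS)=0$, so the step fails as written; and (ii) separate continuity, argued as $\diam$ composed with $\cS\mapsto\cS\wedge\cS'$. Also, continuity of $\max\{1,|\cdot|\}$ should not be deduced from continuity of $\diam$; it holds because $\cS\mapsto|\cS-c|$ ($c\in K$) is continuous by the very definition of the topology.

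Both gaps are repairable by the same localization device you use elsewhere. For (i), with $\cS_0'<\cS_0$ pick a classical point $c$ below $\cS_0'$ (for type IV, inside a defining disk of small diameter) and $s\in|K^*|$ with $\diam(\cS_0')<s<\diam(\cS_0)$; on the neighbourhood $\{|\cS-c|>s\}\times\{|\cS'-c|<s\}$ of $(\cS_0,\cS_0')$ one checks $\cS\wedge\cS'=\cS\wedge c$, so $[\cS,\cS']_{\can}=|\cS-c|\,/\,(\max\{1,|\cS|\}\max\{1,|\cS'|\})$ is continuous there, giving both bounds at once. For (ii), for fixed $\cS'$ of type II or III with center $a$ one has $\diam(\cS\wedge\cS')=\max\{\diam(\cS'),|\cS-a|\}$, continuous in $\cS$; the type IV case follows by a limiting argument over the defining disks (or by noting that $\diam$ restricted to the segment $[\cS',\infty]$, where $\cS\wedge\cS'$ lives, is continuous). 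Finally, the $\cS\mapsto1/\cS$ invariance you invoke at $(\infty,\infty)$ is true but cannot be obtained ``by continuity'' (the kernel is not continuous); it is simpler to note directly that for $|\cS|,|\cS'|>1$ one has $\diam(\cS\wedge\cS')\le\max\{|\cS|,|\cS'|\}$, hence $[\cS,\cS']_{\can}\le1/\min\{|\cS|,|\cS'|\}\to0$.
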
 

 We normalize the Laplacian $\Delta$ on $\sP^1$ 
 so that for every $\cS\in\sP^1$,
 \begin{gather}
 \Delta\log[\cdot,\cS]_{\can}=\delta_{\cS}-\Omega_{\can}\label{eq:Hsia}
 \end{gather}
 on $\sP^1$ (for the construction of $\Delta$ on $\sP^1$ for non-archimedean $K$,
 see \cite[\S 5]{BR10}, \cite[\S7.7]{FJbook}, \cite[\S 3]{Thuillierthesis}:
 in \cite{BR10} the opposite sign convention on $\Delta$ is adopted).

Since we are interested in dynamics of rational functions, we introduce
only Berkovich (open or closed) connected affinoids in $\sP^1$.

\begin{fact}\label{th:affinoid}
 Suppose that $K$ is non-archimedean. A Berkovich closed disk $\sD$ is
 either $\{\cS\in\sP^1\setminus\{\infty\}:|\cS-w|\le r\}$ or 
 $\{\cS\in\sP^1\setminus\{\infty\}:|\cS-w|\ge r\}\cup\{\infty\}$
 for some $w\in\bP^1\setminus\{\infty\}$ and some $r\ge 0$, and
 is said to be {\itshape strict $($or rational$)$} if $r\in|K|$. Similarly,
 a Berkovich open disk is either $\{\cS\in\sP^1\setminus\{\infty\}:|\cS-w|<r\}$ or 
 $\{\cS\in\sP^1\setminus\{\infty\}:|\cS-w|>r\}\cup\{\infty\}$
 for some $w\in\bP^1\setminus\{\infty\}$ and some $r\ge 0$, and
 is said to be strict (or rational) if $r\in|K|$. 

 A Berkovich open (resp.\ closed) {\itshape connected} affinoid $U$ in $\sP^1$
 is the intersection of finitely many Berkovich open (resp.\ closed) disks
 and $\sP^1$, and is said to be strict if in addition all
 the Berkovich open (resp.\ closed) disks determining $U$ 
 are strict (or rational). 

 A Berkovich open connected affinoid is also called either a simple domain or
 an open fundamental domain in $\sP^1$.
 The set of all strict Berkovich open connected affinoids generates the topology
 of $\sP^1$ (cf.\ \cite[\S 2.6]{BR10}, \cite[\S 6]{Benedetto10}, \cite[\S 2.1]{FR09}).
For non-archimedean $K$,
the relative topology of $\bP^1$ in $\sP^1$ agrees with 
the metric topology on $\bP^1$ induced by the chordal distance on $\bP^1$.
Both $\bP^1$ and $\sP^1\setminus\bP^1$
are dense in $\sP^1$. 
\end{fact}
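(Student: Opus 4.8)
The plan is to unwind the definitions, since this Fact collects standard material on the Berkovich line, and to invoke the potential-theoretic references at the points where a genuine argument is needed; four assertions are substantive.

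\emph{The descriptions of disks.} By Berkovich's classification a point $\cS\in\sP^1\setminus\{\infty\}$ is a cofinal class of a non-increasing nested sequence $(\cB_j)$ of disks, and the Berkovich closed disk determined by a fixed disk $\cB=\{z\in K:|z-a|\le r\}$ is, by definition, the set of those $\cS$ with $\cB_j\subset\cB$ for all large $j$. First one checks that this happens if and only if $\diam(\cS\wedge a)\le r$: the implication $\Leftarrow$ is immediate from the definition of $\cS\wedge a$ as the smallest disk containing every $\cB_j$ together with the classical point $a$, and $\Rightarrow$ uses the dichotomy that two intersecting disks are nested. Since $|\cS-a|=\diam(\cS\wedge a)$ by the definition of $|\cdot-a|$, this yields the stated form of a Berkovich closed disk not containing $\infty$; the form of one containing $\infty$ follows by the coordinate change $z\mapsto 1/z$, or by noting that its complement is a Berkovich open disk of the first kind, and the open-disk cases are identical with $<,>$ in place of $\le,\ge$. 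Strictness is by definition the condition $r\in|K|$, and both strict and non-strict disks occur since $|K|$ is dense but proper in $\bR_{\ge 0}$.

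\emph{Connectedness of affinoids, and that strict open connected affinoids generate the topology.} A nonempty finite intersection of Berkovich disks is connected because, under the partial order and join $\wedge$ recalled above, $\sP^1$ is an $\bR$-tree in which each Berkovich disk is a convex subtree, and an intersection of convex subtrees is convex, hence arcwise connected. For the topology, recall that the Berkovich topology on $\sP^1$ is the coarsest for which $\cS\mapsto|g|(\cS)$ is continuous for every $g\in K(z)$. On the one hand each $|\cS-w|$ is continuous on $\sP^1\setminus\{\infty\}$ (as recalled in the definition of the generalized Hsia kernel), and likewise near $\infty$ via $z\mapsto 1/z$, so every Berkovich open connected affinoid is open. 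On the other hand, as $K$ is algebraically closed every $g\in K(z)$ factors into linear factors, so each subbasic set $\{|g|(\cS)<c\}$ or $\{|g|(\cS)>c\}$ is a finite Boolean combination of sets $\{|\cS-w|<c'\}$ and $\{|\cS-w|>c'\}$; since $\{|g|(\cS)<c\}$ is moreover open, one concludes that the Berkovich open connected affinoids form a basis, and restricting to the strict ones loses nothing because $|K|$ is dense in $\bR_{\ge 0}$, so every open disk is an increasing union of strict open disks.

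\emph{The subspace topology on $\bP^1$ and density.} The trace on $\bP^1$ of a strict Berkovich open disk $\{|\cS-w|<r\}$ is the classical disk $\{z\in\bP^1\setminus\{\infty\}:|z-w|<r\}$; these traces, together with their images under $z\mapsto 1/z$, form a basis for the chordal-metric topology on $\bP^1$ --- equivalently, $[z,w]$ is the restriction to $\bP^1\times\bP^1$ of $[\cdot,\cdot]_{\can}$, continuous there by Fact~\ref{th:separately} --- while conversely every chordal ball arises as such a trace, so the two topologies on $\bP^1$ agree. Then $\bP^1$ is dense because any point $\cS_0$, being a cofinal class of Berkovich closed disks $\cB_j$, is a limit of classical points: for each finite $F\subset K(z)$ the non-archimedean maximum-modulus principle supplies a classical $z_F$ on the bounding sphere of some $\cB_j$ with $|g|(z_F)=|g|(\cB_j)$ for all $g\in F$, and the net $(z_F)$ converges to $\cS_0$. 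Granting this, $\sP^1\setminus\bP^1$ is dense as well: around a classical point $z$ the disks $\{|\cS-z|<r\}$ form a neighborhood basis, and each contains the type II point $\{w\in K:|w-z|\le r'\}$ for $0<r'<r$, which lies outside $\bP^1$; hence every nonempty open set, meeting $\bP^1$ by the previous step, also meets $\sP^1\setminus\bP^1$.

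\emph{Main obstacle.} The only genuinely non-formal point is identifying the disk-and-tree picture of $\sP^1$ with the Berkovich (Gelfand) topology, i.e.\ that strict open connected affinoids form a basis; this step rests on the analytic definition of $\sP^1$ together with the factorization of rational functions over the algebraically closed field $K$, and is where \cite[\S 2.6]{BR10} is genuinely invoked rather than re-derived. The recurring bookkeeping with the chart at $\infty$ is routine throughout.
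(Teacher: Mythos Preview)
The paper does not prove this Fact at all: it is stated as a block of definitions together with a few standard assertions, each supported by a citation to \cite[\S 2.6]{BR10}, \cite[\S 6]{Benedetto10}, \cite[\S 2.1]{FR09}, and no argument is given. So there is nothing to compare your approach against except those references.

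Your sketch is a correct outline of the standard arguments one would find in those references, and you have identified the right substantive claims (topology generation, agreement of the subspace and chordal topologies on $\bP^1$, and density of $\bP^1$ and $\sP^1\setminus\bP^1$). A few remarks: the first block on ``descriptions of disks'' is largely unnecessary, since in the paper these descriptions are taken as the \emph{definitions} of Berkovich open/closed disks, not derived properties. In the topology-generation step, the claim that $\{|g|(\cS)<c\}$ is a finite Boolean combination of sets $\{|\cS-w|\lessgtr c'\}$ is a bit glib---what one actually uses is that such a set is open and locally, near any point, agrees with a finite intersection of open Berkovich disks (the ultrametric makes the dominant linear factor locally constant); your own concluding sentence correctly signals that this is where one leans on \cite[\S 2.6]{BR10}. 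The density arguments are fine; for type IV points your maximum-modulus formulation should be read as choosing $z_F\in\cB_j$ for suitably large $j$ rather than on a ``bounding sphere''.
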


From rigid analysis,

\begin{definition}
 For non-archimedean $K$,
 a closed (resp.\ open) connected affinoid in $\bP^1$ is the intersection between
 a Berkovich closed (resp.\ open) connected affinoid $U$ in $\sP^1$ and $\bP^1$, and said to be
 {\itshape strict} if $U$ is strict. 
 A ($K$-valued) holomorphic function $T$ on a strict closed connected affinoid $V$ in $\bP^1$
 is defined by a uniform limit on $V$ (with respect to $[\cdot,\cdot]$)
 of a sequence of rational functions on $\bP^1$
 with no pole in $V$. By definition, 
 a holomorphic function $T$ on an open subset $D$ in $\bP^1$
 is a function on $D$ which restricts to a holomorphic function on any strict 
 closed connected affinoid $V$ in $D$.
\end{definition}

\begin{fact}\label{th:holo}
For non-archimedean $K$,
the modulus $|T|$ of a holomorphic function $T$
on a strict closed connected affinoid $V$ in $\bP^1$
attains both its maximum and minimum values on $V$ (the maximum modulus principle,
cf.\ \cite[\S 6.2.1, \S 7.3.4]{BGR}).
If in addition $T$ is non-constant, then $T$ has at most finitely many zeros in $V$
(this follows from the Weierstrass preparation theorem, cf.\ \cite[Theorem 3.5]{Benedetto10}).
\end{fact}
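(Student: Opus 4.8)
The plan is to prove Fact~\ref{th:holo} by recognizing the ring of holomorphic functions on $V$ as a one-dimensional, reduced affinoid algebra and then quoting two standard structural results for such algebras — the maximum modulus principle and the Weierstrass preparation theorem. The points that genuinely need argument, rather than soft topology, are forced by the non-archimedean failure of local compactness of $V\cap\bP^1$: one must see that the extrema of $|T|$ are attained at \emph{classical} points $z\in V$ (not merely at Berkovich points), and that the zero set of a non-constant $T$ is \emph{finite}, not merely discrete. First I would normalize $V$. If $V=\bP^1$ then a holomorphic function on $V$ is a uniform limit of rational functions with no pole on $\bP^1$, hence constant, and there is nothing to prove; so assume $V\neq\bP^1$. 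By Fact~\ref{th:affinoid} the set $\bP^1\setminus V$ is a nonempty finite disjoint union of strict open disks, and after composing with the M\"obius transformation $z\mapsto 1/(z-p)$ for some classical $p\notin V$ (a M\"obius map and its inverse preserve holomorphy, hence the assertion) and rescaling by a scalar of suitable absolute value (available because $K$ is algebraically closed, so $|K^\times|$ is a dense divisible subgroup of $\bR_{>0}$ containing the radii of a strict affinoid), I may take $V=\{z\in K:|z|\le 1,\ |z-a_i|\ge r_i\ (1\le i\le m)\}$ with all $r_i\in|K^\times|$ and the removed open disks pairwise disjoint (this disjointness being exactly the connectedness of $V$). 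Expanding a rational function with no pole in $V$ into partial fractions and developing each term $(z-b)^{-1}$, with $b$ in the $i$-th removed disk, into the geometric series in $(b-a_i)/(z-a_i)$, which converges uniformly on $V$, shows that the ring $\mathcal{A}_V$ of holomorphic functions on $V$ is the sup-norm completion of $K[z,(z-a_1)^{-1},\dots,(z-a_m)^{-1}]$, i.e.\ the affinoid algebra exhibiting $V$ as a Laurent subdomain of the closed unit disk; in particular $\mathcal{A}_V$ is a reduced integral domain of dimension one, and the attached Berkovich affinoid is compact.

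For the maximum I would invoke the maximum modulus principle for affinoid algebras, \cite[\S6.2.1, \S7.3.4]{BGR}: the supremum seminorm on $\mathcal{A}_V$ equals the spectral seminorm and is attained. Since $\mathcal{A}_V$ is strictly affinoid with algebraically closed (hence infinite) residue field, this supremum is in fact attained over the classical points $V\cap\bP^1$: reduce $T$ modulo the maximal ideal of the relevant type-II Shilov boundary point to a nonzero function on a curve over the residue field, choose a residue point off its zero locus, and lift it to a classical point of $V$ realizing the supremum. For the minimum, if $T$ vanishes somewhere in $V$ then $\min_V|T|=0$ and is attained; otherwise $|T(\cdot)|$ extends continuously to the compact Berkovich affinoid (together with the approximating rational functions; cf.\ Proposition~\ref{th:extension}), so it attains a minimum there at some point $\cS_\ast$, and this minimum is positive — if $\cS_\ast$ is classical because $T$ has no zero there, and if $\cS_\ast$ is non-classical because the seminorm at such a point is a genuine multiplicative norm on the domain $\mathcal{A}_V$, hence nonzero on $T\neq 0$. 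Then $1/T$ is bounded, and, writing $T$ as a uniform limit of rational functions $R_n$ with no pole in $V$, one has $|R_n|\ge\tfrac12\inf_V|T|>0$ on $V\cap\bP^1$ for large $n$, so such $R_n$ has no zero in $V$, $1/R_n$ has no pole in $V$, and $1/R_n\to 1/T$ uniformly on $V$; hence $1/T$ is holomorphic on $V$, and the maximum modulus principle applied to $1/T$ produces a classical point of $V$ at which $|T|$ attains its minimum.

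Finally, for a non-constant $T$ — equivalently, since $\mathcal{A}_V$ is a domain, a nonzero $T$ — I would argue as follows. Every classical $p\in V$ has a strict closed disk neighbourhood $\overline{D}_p\subset V$: by the ultrametric inequality a sufficiently small closed disk about any point of $V$, including a point on one of the bounding circles, remains inside $V$. On $\overline{D}_p$ the restriction of $T$ is a nonzero element of the Tate algebra $K\langle\zeta\rangle$ in a local coordinate $\zeta$ — nonzero because $\mathcal{A}_V\to\mathcal{A}_{\overline{D}_p}$ is injective ($\overline{D}_p$ is dense in $V$ and $\mathcal{A}_V$ is reduced) — which by Weierstrass preparation (\cite[Theorem~3.5]{Benedetto10}; cf.\ \cite[\S5.2.1]{BGR}) equals a unit times a polynomial in $\zeta$ and so has only finitely many zeros in $\overline{D}_p$. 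If $T$ had infinitely many zeros in $V$, they would accumulate in the compact Berkovich affinoid at some point $\cS_\ast$; the case $\cS_\ast\in\bP^1$ is ruled out by the previous sentence applied at $\cS_\ast$, and the case $\cS_\ast\notin\bP^1$ is ruled out because $|T(\cS_\ast)|>0$ (the seminorm at a non-classical point is a faithful norm on $\mathcal{A}_V$) forces $|T|>0$ on a Berkovich neighbourhood of $\cS_\ast$ by continuity. Hence $T$ has finitely many zeros in $V$. The hard part is exactly this interplay: because $V\cap\bP^1$ is not compact, neither the attainment of the extrema nor the finiteness of the zero set comes for free, and both require feeding in the algebraic structure of affinoid algebras — the maximum modulus principle, Weierstrass preparation, reducedness, and the infinitude of the residue field.
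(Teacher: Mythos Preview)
The paper does not actually prove this Fact; it is recorded as background with parenthetical citations to \cite{BGR} and \cite{Benedetto10}. Your write-up is a correct and self-contained unpacking of those citations: the normalization of $V$ to a Laurent domain, the identification of $\mathcal{A}_V$ as a one-dimensional reduced affinoid domain, the appeal to the affinoid maximum modulus principle for the supremum (with the reduction argument to land on a classical point), and the $1/T$ trick for the infimum are all sound. One justification is off: the restriction map $\mathcal{A}_V\to\mathcal{A}_{\overline{D}_p}$ is indeed injective, but not because ``$\overline{D}_p$ is dense in $V$'' --- a small closed disk is certainly not dense in $V$ in either the metric or the Berkovich topology. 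Injectivity holds rather because $\mathcal{A}_V$ is a one-dimensional Noetherian domain, so a nonzero element lies in only finitely many maximal ideals (equivalently, $\mathcal{A}_V/(T)$ is Artinian); since over algebraically closed $K$ the maximal ideals of $\mathcal{A}_V$ are exactly the classical points of $V$, this already gives the finiteness of the zero set directly and renders the local-Weierstrass-plus-Berkovich-compactness argument superfluous, though that argument is also correct once the injectivity is properly justified.
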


Let $\phi\in K(z)$ be a rational function on $\bP^1$.
For non-archimedean $K$, the analytic structure on $\sP^1$ induces
the extended action on $\sP^1$ of $\phi$. For non-constant $\phi$,
the extended action of $\phi$ on $\sP^1$ is continuous, open, surjective, and fiber-discrete,
and preserves $\bP^1$ and $\sP^1\setminus\bP^1$ 
(see \cite[Corollaries 9.9, 9.10]{BR10}, \cite[\S2.2]{FR09}).

\begin{fact}\label{th:rigid}
Suppose that $K$ is non-archimedean and that $\phi$ is non-constant.
Then $\phi$ maps a Berkovich disk (resp.\ Berkovich connected affinoid) onto
either $\sP^1$ or a Berkovich disk (resp.\ Berkovich connected affinoid),
preserving their openness, closedness, and strictness. 
Each component $U$ of $\phi^{-1}(V)$ for any Berkovich connected affinoid $V$
is a Berkovich connected affinoid, and the restriction $\phi:U\to V$ is proper and 
surjective
(\cite[Corollary 9.11, Lemma 9.12]{BR10},
\cite[Propositions 6.13]{Benedetto10}, \cite[Proposition 2.6]{Juan03}).
 The local (algebraic) degree $\deg_{z_0}\phi\in\bN$ of $\phi$ 
 at each $z_0\in\bP^1$ also uniquely extends
 to the function $\deg_{\cS}\phi\in\bN$
 for all $\cS\in\sP^1$ so that
 for any Berkovich open connected affinoid $V$ and every component $U$ of $\phi^{-1}(V)$,
 the function
\begin{gather*}
 V\ni\cS_0\mapsto\sum_{\cS\in\phi^{-1}(\cS_0)\cap U}\deg_{\cS}\phi\in\bN
\end{gather*}
 is constant (\cite[\S 2, \S 9]{BR10} and \cite[\S 2.1, Proposition-D\'efinition 2.1]{FR09}.
 See also \cite[\S 6.3]{Benedetto10}, \cite[\S 4]{Jonsson12}). 
 We denote this constant by $\deg(\phi:U\to V)$.
\end{fact}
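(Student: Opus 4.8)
The plan is to deduce every assertion from the functoriality of Berkovich analytification together with the two analytic inputs already recorded for the classical $\bP^1$ — the maximum modulus principle and Weierstrass preparation for holomorphic functions on strict closed connected affinoids (Fact~\ref{th:holo}) — with the combinatorial glue that $\sP^1$ is uniquely path-connected, so that images and components of preimages of connected sets are connected. First I would reduce to a model situation. Every $M\in\mathrm{PGL}_2(K)$ acts on $\sP^1$ as an automorphism permuting Berkovich disks and connected affinoids and preserving openness, closedness and strictness, since on the set of $K$-closed disks (with $\infty$ adjoined) it acts explicitly and this extends by continuity. Hence, to study $\phi$ on a Berkovich disk $\sD$ I may pre- and post-compose with automorphisms so as to assume $\sD=\{\cS\in\sP^1\setminus\{\infty\}:|\cS|\le1\}$, whose boundary point is the Gauss point $\cS_{\can}$; by scaling it then suffices to treat $\{|\cS|\le r\}$ and $\{|\cS|<r\}$, and the general connected affinoid case follows by intersecting finitely many of the resulting images, once each bounding disk is known to go to a Berkovich disk or onto all of $\sP^1$.

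Next I would treat the image of a disk. On $\sD\cap\bP^1=\{z\in K:|z|\le1\}$ write $\phi=F_0/F_1$ with $F_0,F_1\in\mathcal{O}_K[z]$ coprime and with at least one unit among their coefficients, realizing $\phi|_{\sD\cap\bP^1}$ as a ratio of holomorphic functions. Applying the maximum modulus principle to $\phi$ and to $1/\phi$, together with the Weierstrass factorizations of $F_0$ and $F_1$ (Fact~\ref{th:holo}), one shows that either $\phi$ omits some value on $\sD$, in which case an inspection of its zero/pole structure identifies $\phi(\sD)$ with a single Berkovich closed disk — strict when $\sD$ is, since the image radius is read off from the valuations of the coefficients of $F_0,F_1$ — or $\phi$ attains every value on $\sD$, whence $\phi(\sD)=\sP^1$. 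The open-disk case follows by exhausting $\{|\cS|<1\}$ with strict closed disks. Openness and closedness of the image are preserved because the extended $\phi$ is continuous, open, and preserves $\bP^1$; transporting back through the reduction step gives the first assertion.

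For the preimage statements, let $V$ be a Berkovich connected affinoid. Then $\phi^{-1}(V)$ is closed (resp.\ open) with finitely many components — at most $\deg\phi$ of them — and applying the same Weierstrass-preparation analysis to $\phi^{-1}$ of each bounding disk of $V$ and then intersecting shows each component $U$ is again a Berkovich connected affinoid. The restriction $\phi\colon U\to V$ is closed with finite fibres, hence proper; and since $\phi\colon\sP^1\to\sP^1$ is surjective and $V$ is connected, $\phi(U)$ is a nonempty relatively clopen subset of $V$, so $\phi(U)=V$. Finally, for a Type II point $\cS$ I would define $\deg_\cS\phi$ by choosing a small strict Berkovich open connected affinoid $V\ni\phi(\cS)$ and the component $U\ni\cS$ of $\phi^{-1}(V)$, and setting $\deg_\cS\phi$ equal to the number of preimages in $U$ of a generic point of $V$, counted with the multiplicities supplied by the Weierstrass factorization over the corresponding disk in $\bP^1$; shrinking $V$ shows this is well defined, and for $\cS\in\bP^1$ it recovers the algebraic local degree. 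The asserted constancy of $\cS_0\mapsto\sum_{\cS\in\phi^{-1}(\cS_0)\cap U}\deg_\cS\phi$ on $V$ is precisely the existence of a well-defined global degree $\deg(\phi\colon U\to V)$: over the affine model this is the statement that the number of zeros of $F_0-tF_1$ in a disk, counted with multiplicity, is locally constant in $t$, and the local counts patch because $U$ is connected and $V$ is a tree.

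I expect the real obstacle to be the analytic bookkeeping in the two middle steps: verifying that the image of a disk is genuinely a \emph{disk} rather than merely a connected affinoid, and that the fibre count is \emph{locally} constant — both of which use the one-dimensionality of $\sP^1$ in an essential way and are the substance of the cited results of Rivera-Letelier, Benedetto, and Baker--Rumely. The remaining steps are formal consequences of the tree topology together with the continuity and openness of the extended $\phi$.
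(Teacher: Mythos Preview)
The paper does not prove this statement: it is recorded as a \emph{Fact} with citations to \cite{BR10}, \cite{Benedetto10}, \cite{Juan03}, \cite{FR09}, and \cite{Jonsson12}, and no argument is given. Your proposal is therefore not being compared against a proof in the paper but is rather a sketch of the content of those references.

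As a roadmap your outline is sound and follows the standard development---reduction via $\mathrm{PGL}_2(K)$, Weierstrass preparation on closed disks, and the tree structure of $\sP^1$---and you yourself correctly flag that the two genuinely analytic steps (that the image of a disk is a \emph{disk} rather than a general connected affinoid, and that the fibre count with multiplicity is locally constant) are exactly the substance of the cited results rather than something your sketch establishes. Two small points if you intend this as more than a pointer to the literature: your construction of $\deg_\cS\phi$ is spelled out only at Type~II points, whereas the Fact asserts an extension to every $\cS\in\sP^1$, so Types~III and~IV need a sentence (e.g.\ via continuity along segments in the tree, or by the directional-multiplicity definition in \cite[\S9]{BR10}); and the dichotomy ``$\phi$ omits a value on $\sD$, hence $\phi(\sD)$ is a single closed disk'' hides the real work---one must rule out that $\phi(\sD)$ is, say, an annulus or a disk with several deleted subdisks, and this is where the one-dimensionality and the explicit Newton-polygon/Weierstrass analysis of \cite{Juan03} and \cite[\S6]{Benedetto10} are used in earnest.
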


If $\deg\phi>0$, then 
the extended $\phi:\sP^1\to\sP^1$ and
the local degree $\deg_{\cS}\phi$ of $\phi$ at each $\cS\in\sP^1$ induce 
a push-forward $\phi_*$ and
pullback $\phi^*$ on the space of continuous functions on $\sP^1$,
on the space of $\delta$-subharmonic functions on $\sP^1$
(functions on $\sP^1$ which can locally be written as the difference of two subharmonic functions), 
and on the space of Radon measures on $\sP^1$
(see \cite[\S9.4, \S9.5]{BR10}, \cite[\S2.2]{FR09}).
When $\deg\phi=0$, for a Radon measure $\mu$ on $\sP^1$, we set $\phi^*\mu:=0$
by convention. 
It is fundamental that for each non-constant $\phi$, 
the Laplacian $\Delta$ behaves functorially under $\phi^*$ in that for any $\delta$-subharmonic function
$h$ on $\sP^1$,
\begin{gather*}
 \Delta\phi^*h=\phi^*\Delta h
\end{gather*} 
on $\sP^1$ (for non-archimedean $K$, see
\cite[\S9.5]{BR10}, \cite[\S 2.4]{FR09}).

\begin{definition}\label{th:dynGreen}
 A lift $F_{\phi}=((F_{\phi})_0,(F_{\phi})_1):K^2\to K^2$ of $\phi$ is a homogeneous polynomial endomorphism
 of $K^2$ such that
 \begin{gather*}
 \pi\circ F_{\phi}=\phi\circ\pi
 \end{gather*} 
 and that $F_{\phi}^{-1}(0)=\{0\}$ if $\deg\phi>0$.
 Such an $F_{\phi}$ is unique up to scaling by an element of $K^*= K\setminus\{0\}$, 
 and $\deg F_{\phi}=\deg\phi$. The function 
 \begin{gather*}
  \log\|F_{\phi}\|-(\deg\phi)\log\|\cdot\|
 \end{gather*} 
 on $K^2\setminus\{0\}$ descends to one on $\bP^1$, which in turn
 extends continuously to a function $T_{F_{\phi}}:\sP^1\to\bR$ satisfying
 \begin{gather}
 \Delta T_{F_{\phi}}=\phi^*\Omega_{\can}-(\deg\phi)\Omega_{\can}\label{eq:cohomological}
 \end{gather}
 on $\sP^1$; indeed, for each $w\in\bP^1\setminus\{\infty\}$,
 since $|\cdot-w|=[\cdot,w]_{\can}[\cdot,\infty]_{\can}^{-1}[w,\infty]^{-1}$ on $\sP^1$,
 we have $\Delta\log|\cdot-w|=\delta_w-\delta_{\infty}$ on $\sP^1$.
 The homogeneous polynomial $(F_{\phi})_0(p_0,p_1)\in K[p_0,p_1]$
 factors into $\deg\phi$ homogeneous linear factors in $K[p_0,p_1]$. 
 Hence the function $\log|(F_{\phi})_0(p_0,p_1)|-(\deg\phi)\log|p_0|$ on $K^2\setminus\{0\}$
 descends to one on $\bP^1$, which in turn
 extends to a $\delta$-subharmonic function $S_{F_{\phi}}$ on $\sP^1$ satisfying
 $\Delta S_{F_{\phi}}=\phi^*\delta_{\infty}-(\deg\phi)\delta_{\infty}$
 on $\sP^1$. This yields \eqref{eq:cohomological}
 since
 $T_{F_{\phi}}=S_{F_{\phi}}-\log[\phi(\cdot),\infty]_{\can}+(\deg\phi)\log[\cdot,\infty]_{\can}$
 on $\sP^1$.
\end{definition}   

Let $\phi_i\in K(z)$, $i\in\{1,2\}$,
be rational functions on $\bP^1$ of degree $d_i$.
We call the following extension $[\phi_1,\phi_2]_{\can}$ to $\sP^1$
of the function $z\mapsto [\phi_1(z),\phi_2(z)]$ on $\bP^1$
{\itshape the proximity function of $\phi_1$ and $\phi_2$ on $\sP^1$}.

\begin{proposition}\label{th:extension}
For each $n\in\bN$, 
the function $[\phi_1(\cdot),\phi_2(\cdot)]$ on $\bP^1$ extends continuously to a function 
$[\phi_1,\phi_2]_{\can}(\cdot)$ on $\sP^1$ which takes its values in $[0,1]$ and,
if $\phi_1\not\equiv\phi_2$ and $\max\{d_1,d_2\}>0$, satisfies
 \begin{gather}
  \Delta\log[\phi_1,\phi_2]_{\can}(\cdot)=\sum_{w\in\bP^1:\phi_1(w)=\phi_2(w)}\delta_w
  -\phi_1^*\Omega_{\can}-\phi_2^*\Omega_{\can}.\label{eq:movingext}
 \end{gather}
 Here the sum $\sum_{w\in\bP^1:\phi_1(w)=\phi_2(w)}\delta_w$ takes into account
 the multiplicity of each root of $\phi_1=\phi_2$ in $\bP^1$.
\end{proposition}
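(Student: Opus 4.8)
The plan is to reduce the construction of the continuous extension to two pieces: a "chordal numerator'' built from a lift, and the two max-denominators that already appear in the generalized Hsia kernel. Writing $z\mapsto[\phi_1(z),\phi_2(z)]$ in homogeneous coordinates, pick lifts $F_1=F_{\phi_1}$ and $F_2=F_{\phi_2}$ as in Definition~\ref{th:dynGreen}. For $p\in K^2\setminus\{0\}$ with $\pi(p)=z$ one has
\begin{gather*}
 [\phi_1(z),\phi_2(z)]=\frac{|F_1(p)\wedge F_2(p)|}{\|F_1(p)\|\cdot\|F_2(p)\|}.
\end{gather*}
The bivariate form $F_1\wedge F_2\colon K^2\to K$ is a homogeneous polynomial of degree $d_1+d_2$ (here I use $\max\{d_1,d_2\}>0$ so this is not identically a unit times a constant when $\phi_1\not\equiv\phi_2$; the hypothesis $\phi_1\not\equiv\phi_2$ guarantees $F_1\wedge F_2\not\equiv 0$). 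Exactly as in the treatment of $(F_\phi)_0$ in Definition~\ref{th:dynGreen}, $F_1\wedge F_2$ factors into $d_1+d_2$ homogeneous linear factors over $K$, whose projectivizations are precisely the roots of $\phi_1=\phi_2$ in $\bP^1$ counted with multiplicity. Hence $\log|F_1\wedge F_2|-(d_1+d_2)\log|p_0|$ descends to $\bP^1$ and extends to a $\delta$-subharmonic function on $\sP^1$ with Laplacian $\sum_{w:\phi_1(w)=\phi_2(w)}\delta_w-(d_1+d_2)\delta_\infty$.

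Next I assemble these into the claimed extension. Using $[\cdot,\infty]_{\can}=1/\max\{1,|\cdot|\}$ and $\|F_i(p)\|/\|p\|^{d_i}$ being the descent of $T_{F_i}$, I write
\begin{gather*}
 \log[\phi_1(z),\phi_2(z)]
 =\bigl(\log|F_1\wedge F_2|-(d_1{+}d_2)\log|p_0|\bigr)
 -\bigl(\log\|F_1\|-d_1\log\|p\|\bigr)
 -\bigl(\log\|F_2\|-d_2\log\|p\|\bigr)
\end{gather*}
\begin{gather*}
 \qquad\qquad +\,(d_1{+}d_2)\log[z,\infty]_{\can}
 -d_1\log[z,\infty]_{\can}-d_2\log[z,\infty]_{\can},
\end{gather*}
where the last line telescopes to $0$ but is kept to see that each bracketed term on the right already has a continuous (or $\delta$-subharmonic) extension to $\sP^1$: the first bracket is $S_{F_1\wedge F_2}$ above, the second and third are $T_{F_1}$ and $T_{F_2}$. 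Thus the right-hand side extends to a $\delta$-subharmonic function $h$ on $\sP^1$, and $\exp h$ is the desired continuous extension $[\phi_1,\phi_2]_{\can}(\cdot)$ on $\sP^1\setminus\bP^1$; on $\bP^1$ it agrees with $[\phi_1(z),\phi_2(z)]$ by construction. Continuity at the points of $\bP^1$, and the fact that the extension is $[0,1]$-valued, follow because $[\phi_1(z),\phi_2(z)]=[\phi_1,\phi_2]_{\can}(\phi_1(z)\wedge\cdots)$ can be compared with the generalized Hsia kernel: precisely, $[\phi_1(z),\phi_2(z)]$ equals $[\phi_1(z),\phi_2(z)]_{\can}$ up to the two $\max\{1,|\cdot|\}$ factors, and Fact~\ref{th:separately} provides exactly the needed upper semicontinuity together with continuity off the diagonal; since $\phi_1(z)=\phi_2(z)$ forces $z$ to be one of the finitely many roots (Fact~\ref{th:holo}) and the extension vanishes there, one gets genuine continuity on all of $\sP^1$ and values in $[0,1]$. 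Finally, applying $\Delta$ to $h$ and using \eqref{eq:cohomological} for $T_{F_1}$ and $T_{F_2}$ together with $\Delta S_{F_1\wedge F_2}$ computed above yields
\begin{gather*}
 \Delta\log[\phi_1,\phi_2]_{\can}(\cdot)
 =\sum_{w:\phi_1(w)=\phi_2(w)}\delta_w-(d_1{+}d_2)\delta_\infty
 -\bigl(\phi_1^*\Omega_{\can}-d_1\Omega_{\can}\bigr)-\bigl(\phi_2^*\Omega_{\can}-d_2\Omega_{\can}\bigr),
\end{gather*}
and the $\delta_\infty$ and $\Omega_{\can}$ contributions cancel because $(d_1+d_2)(\Omega_{\can}-\Omega_{\can})$... more carefully, the telescoped $\log[z,\infty]_{\can}$ terms contribute $(d_1+d_2-d_1-d_2)(\delta_\infty-\Omega_{\can})=0$, leaving \eqref{eq:movingext}. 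The uniqueness of the extension is immediate since $\bP^1$ is dense in $\sP^1$ (Fact~\ref{th:affinoid}) and the extension is continuous.

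The main obstacle I anticipate is the verification of continuity at the classical points $z\in\bP^1$ where $\phi_1(z)=\phi_2(z)$: there $[\phi_1,\phi_2]_{\can}$ must vanish, but the naive bound coming from the generalized Hsia kernel only gives upper semicontinuity (Fact~\ref{th:separately} explicitly warns that the Hsia kernel is continuous nowhere on the diagonal inside $\sP^1\setminus\bP^1$). The resolution is that here the diagonal point in question lies in $\bP^1$, where Fact~\ref{th:separately} does give continuity of $[\cdot,\cdot]_{\can}$; equivalently, one sees vanishing-to-the-right-order directly from the linear factorization of $F_1\wedge F_2$, which shows $h\to-\infty$ precisely at those finitely many roots with the correct multiplicity. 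A secondary, purely bookkeeping point is the degenerate case $d_1=0$ or $d_2=0$ (one target constant): then the corresponding lift is a constant vector, $T_{F_i}\equiv$ (a descent of) $\log\|F_i\|-0$, and the same formula goes through with $\phi_i^*\Omega_{\can}=0$; the hypothesis $\max\{d_1,d_2\}>0$ is only needed to ensure $F_1\wedge F_2\not\equiv 0$ given $\phi_1\not\equiv\phi_2$.
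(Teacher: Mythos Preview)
Your approach is essentially the paper's: lift, factor $F_1\wedge F_2$ into linear forms, and subtract the two $T_{F_i}$. However, there is an algebraic slip in your decomposition. You wrote the displayed identity with a final line $(d_1{+}d_2)\log[z,\infty]_{\can}-d_1\log[z,\infty]_{\can}-d_2\log[z,\infty]_{\can}$ that ``telescopes to $0$''. But check the balance: since $[z,\infty]=|p_0|/\|p\|$, one has
\[
S_{F_1\wedge F_2}-T_{F_1}-T_{F_2}
=\log[\phi_1(z),\phi_2(z)]+(d_1{+}d_2)\bigl(\log\|p\|-\log|p_0|\bigr)
=\log[\phi_1(z),\phi_2(z)]-(d_1{+}d_2)\log[z,\infty],
\]
so the correct additional term is a single $+(d_1{+}d_2)\log[\cdot,\infty]_{\can}$, which does \emph{not} cancel. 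This is exactly why your Laplacian computation did not close: with the corrected term, $\Delta\bigl((d_1{+}d_2)\log[\cdot,\infty]_{\can}\bigr)=(d_1{+}d_2)(\delta_\infty-\Omega_{\can})$ by \eqref{eq:Hsia} cancels the leftover $-(d_1{+}d_2)\delta_\infty+(d_1{+}d_2)\Omega_{\can}$ and yields \eqref{eq:movingext} cleanly.

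The paper bypasses this bookkeeping by writing the factorization directly in chordal form, $F_1(p)\wedge F_2(p)=\prod_j(p\wedge q_j)$, so that on $\bP^1$
\[
\log[\phi_1(\cdot),\phi_2(\cdot)]=\sum_{j=1}^{d_1+d_2}\bigl(\log[\cdot,\pi(q_j)]+\log\|q_j\|\bigr)-T_{F_1}-T_{F_2},
\]
and then replaces each $[\cdot,\pi(q_j)]$ by $[\cdot,\pi(q_j)]_{\can}$. This has the advantage that continuity of the extension is immediate, since each $\cS\mapsto[\cS,\pi(q_j)]_{\can}$ is continuous (separate continuity in Fact~\ref{th:separately}) and the $T_{F_i}$ are continuous; the bound $[\phi_1,\phi_2]_{\can}\in[0,1]$ then follows from density of $\bP^1$ without any discussion of diagonal behaviour. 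Your route through the $\delta$-subharmonic (not continuous) function $S_{F_1\wedge F_2}$ forces the more roundabout continuity argument you sketch; once the missing $(d_1{+}d_2)\log[\cdot,\infty]_{\can}$ is restored the two arguments become equivalent.
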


\begin{proof}
 Let $F_1$ and $F_2$ be lifts of $\phi_1$ and $\phi_2$, respectively. Then
 there are points $q_j=q_j^{F_1,F_2}\in K^2\setminus\{0\}$ $(j=1,\ldots,d_1+d_2)$ such that
\begin{gather*}
 F_1(p)\wedge F_2(p)=\prod_{j=1}^{d_1+d_2}(p\wedge q_j)
\end{gather*}
 on $K^2$. Here, $\pi(q_j)$ is a root of $\phi_1=\phi_2$ in $\bP^1$
 for each $j\in\{1,\ldots,d_1+d_2\}$.
 On $\bP^1$,
\begin{gather}
 \log[\phi_1(\cdot),\phi_2(\cdot)]
=\sum_{j=1}^{d_1+d_2}(\log[\cdot,\pi(q_j)]+\log\|q_j\|)
-T_{F_1}|\bP^1-T_{F_2}|\bP^1,\label{eq:factor}
\end{gather}
 where $T_{F_i} = \log\|F_i\|-d_i\log\|\cdot\|$
(extended continuously to $\sP^1$), $i\in\{1,2\}$, is the function introduced in Definition 
\ref{th:dynGreen}.
The right hand side of \eqref{eq:factor} extends 
$[\phi_1(\cdot),\phi_2(\cdot)]$ on $\bP^1$ to 
$[\phi_1,\phi_2]_{\can}(\cdot)$ on $\sP^1$ continuously 
so that
\begin{gather*}
 \log[\phi_1,\phi_2]_{\can}(\cdot)
=\sum_{j=1}^{d_1+d_2}(\log[\cdot,\pi(q_j)]_{\can}+\log\|q_j\|)-T_{F_1}-T_{F_2}
\end{gather*}
 on $\sP^1$ (see Fact \ref{th:separately}),
 and satisfies \eqref{eq:movingext} from \eqref{eq:Hsia} and \eqref{eq:cohomological}.
 The density of $\bP^1$ in $\sP^1$ 
 implies that $[\phi_1,\phi_2]_{\can}(\cdot)\in[0,1]$ on $\sP^1$.
\end{proof}

\begin{remark}[a discontinuity of {$[\phi_1(\cdot),\phi_2(\cdot)]_{\can}$}]\label{th:incoincidence}
 If $\phi_2\equiv a\in\bP^1$ on $\bP^1$, then 
 $[\phi_1(\cdot),a]_{\can}$ coincides with $[\phi_1,a]_{\can}(\cdot)$ 
 since they are continuous on $\sP^1$ and
 identical on the dense subset $\bP^1$ in $\sP^1$. We point out that
 if $K$ is non-archimedean and both $\phi_1$ and $\phi_2$ are non-constant, 
 then $[\phi_1(\cdot),\phi_2(\cdot)]_{\can}$, which is the evaluation of $[\cS_1,\cS_2]_{\can}$
 at $\cS_1=\phi_1(\cdot)$ and $\cS_2=\phi_2(\cdot)$ in $\sP^1$, is not always continuous on $\sP^1$,
 so is not always identical with $[\phi_1,\phi_2]_{\can}(\cdot)$. This discrepancy
 seems to have been overlooked in the proof of Theorem \ref{th:moving} in \cite[\S 3.4]{FR09}.

 An example is $\phi_1=\phi_2=\Id_{\bP^1}$; see Fact \ref{th:separately}.
 More generally,
 let $\phi_1$ and $\phi_2$ be non-constant polynomials such that 
 $\phi_1(0)=\phi_2(0)=0$ and that $\phi_1'(0)=\phi_2'(0)\neq 0$.
 Fix $r>0$ small enough that on $\{z\in K:|z|<2r\}$,
 \begin{gather*}
  [\phi_1(z),\phi_2(z)]_{\can}=[\phi_1(z),\phi_2(z)]=|\phi_1(z)-\phi_2(z)|\le\frac{1}{2}|\phi_1'(0)|r,
 \end{gather*} 
 and that for the point $\cS_r\in\sP^1\setminus\bP^1$ determined by
 the disk $\{z\in K:|z|\le r\}$,
\begin{gather*}
 [\phi_1(\cS_r),\phi_2(\cS_r)]_{\can}=\diam(\phi_1(\cS_r)\wedge\phi_2(\cS_r))=
 |\phi_1(\cS_r)|=|\phi_1'(0)|r>0. 
\end{gather*}
 Since any open neighborhood of $\cS_r$ in $\sP^1$ intersects
 $\{z\in K:|z|<2r\}$, we have
 $\liminf_{\cS\to\cS_r}[\phi_1(\cS),\phi_2(\cS)]_{\can}\le |\phi_1'(0)|r/2<[\phi_1(\cS_r),\phi_2(\cS_r)]_{\can}$.
 Hence the function $[\phi_1(\cdot),\phi_2(\cdot)]_{\can}$ on $\sP^1$ is not continuous at $\cS_r$.
\end{remark}

Let $f\in K(z)$ be a rational function on $\bP^1$ of degree $d>1$, and let $F$ be a lift of $f$. 

\begin{definition}\label{th:cohomological}
 The dynamical Green function of $F$ on $\sP^1$ is
 \begin{gather}
  g_F:=\sum_{n=0}^{\infty}\frac{1}{d^n}(f^n)^*\left(\frac{1}{d}T_F\right)
  =\lim_{n\to\infty}\frac{1}{d^n}T_{F^n}\in\bR\label{eq:green},
 \end{gather}
 which converges uniformly on $\sP^1$ (\cite[\S10.1]{BR10}, \cite[\S3.1]{FR09}). 
\end{definition}

 The function $g_F$ is continuous on $\sP^1$.
 For every $k\in\bN$, we have $g_{F^k}=g_F$.
 For an arbitrary lift of $f$, given by $cF$ for some $c\in K^*$,
 we have $g_{cF}=g_F+(\log|c|)/(d-1)$. 

\begin{definition}
 The probability Radon measure
 \begin{gather}
 \mu_f:=\Delta g_F+\Omega_{\can}=\lim_{n\to\infty}\frac{1}{d^n}(f^n)^*\Omega_{\can}\label{eq:equilibrium}
 \end{gather}
 on $\sP^1$ is called the equilibrium measure of $f$ on $\sP^1$. 
 Here the last limit is a weak one on $\sP^1$.
\end{definition}

\begin{fact}
 By the continuity of $g_F$, the measure $\mu_f$ has no atoms in $\bP^1$. 
 Moreover, $\mu_f$ is both balanced and invariant under $f$ in that
 \begin{gather}
  f^*\mu_f=(\deg f)\mu_f\quad\text{and}\quad f_*\mu_f=\mu_f,\label{eq:balanced}
 \end{gather}
 respectively (see \cite[\S10]{BR10}, \cite[\S2]{ChambertLoir06}, \cite[\S3.1]{FR09}
 for non-archimedean $K$). 
\end{fact}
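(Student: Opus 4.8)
The plan is to derive all three assertions directly from the defining identity $\mu_f=\Delta g_F+\Omega_{\can}$ in \eqref{eq:equilibrium}, the cohomological identity \eqref{eq:cohomological}, and the functoriality of $\Delta$ under $f^*$, keeping the argument self-contained rather than quoting the cited references. I would treat first the absence of atoms. Since $g_F$ and the potential $u(\cS):=\int_{\sP^1}\log[\cS,\cdot]_{\can}\,\rd\mu_f$ both have Laplacian $\mu_f-\Omega_{\can}$ on the compact space $\sP^1$ (by \eqref{eq:Hsia} and the probability property of $\mu_f$), their difference has zero Laplacian, hence is constant; so $u$ is continuous, in particular bounded below. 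If $\mu_f$ carried an atom $m\delta_{z_0}$ with $m>0$ at some $z_0\in\bP^1$, write $u(\cS)=m\log[\cS,z_0]_{\can}+\int_{\sP^1\setminus\{z_0\}}\log[\cS,\cdot]_{\can}\,\rd\mu_f$: the second term is bounded above by $0$, while $[\cS,z_0]_{\can}\to[z_0,z_0]_{\can}=0$ as $\cS\to z_0$ by Fact \ref{th:separately}, so the first term tends to $-\infty$; thus $u$ would be unbounded below near $z_0$, a contradiction. (That $\Omega_{\can}$ has no atom in $\bP^1$ is clear: it is the Fubini--Study element when $K\cong\bC$, and $\delta_{\cS_{\can}}$ with $\cS_{\can}\notin\bP^1$ otherwise.)

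For the balanced identity $f^*\mu_f=(\deg f)\mu_f$, I would first record the functional equation $g_F\circ f=d\,g_F-T_F$ on $\sP^1$. On $\bP^1$ this is a short homogeneous computation: writing $g_F(\pi(p))=G_F(p)-\log\|p\|$ with $G_F:=\lim_n d^{-n}\log\|F^n(\cdot)\|$, which satisfies $G_F\circ F=d\,G_F$, one gets $g_F(f(\pi(p)))=d\,G_F(p)-\log\|F(p)\|=d\,g_F(\pi(p))-T_F(\pi(p))$; the identity then extends to $\sP^1$ by continuity and density of $\bP^1$. Applying $\Delta$ and using its functoriality under $f^*$ together with \eqref{eq:cohomological} and \eqref{eq:equilibrium},
\begin{gather*}
 f^*\mu_f-f^*\Omega_{\can}=f^*\Delta g_F=\Delta(g_F\circ f)=d\,\Delta g_F-\Delta T_F=\bigl(d\,\mu_f-d\,\Omega_{\can}\bigr)-\bigl(f^*\Omega_{\can}-d\,\Omega_{\can}\bigr),
\end{gather*}
and the $\Omega_{\can}$-terms cancel, leaving $f^*\mu_f=d\,\mu_f$. (Alternatively one reads this off from $d^{-(n+1)}(f^{n+1})^*\Omega_{\can}=d^{-1}f^*\bigl(d^{-n}(f^n)^*\Omega_{\can}\bigr)$ and the weak convergence in \eqref{eq:equilibrium}, using continuity of $f^*$ on measures.) Then invariance follows by combining this with the projection formula $f_*\circ f^*=(\deg f)\cdot\Id$ on Radon measures on $\sP^1$: $f_*\mu_f=f_*\bigl(d^{-1}f^*\mu_f\bigr)=d^{-1}(f_*f^*)\mu_f=\mu_f$.

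The only genuinely delicate point, and hence the main obstacle, is the no-atoms step: it hinges on the local behaviour of potentials on $\sP^1$ --- a positive atom of the Laplacian at a point of $\bP^1$ forces a logarithmic pole (to $-\infty$) of every potential --- which is incompatible with the continuity of $g_F$ inherited from the uniform convergence in \eqref{eq:green}. Once that, the functoriality of $\Delta$, and the projection formula are in hand, the balanced and invariance identities are purely formal.
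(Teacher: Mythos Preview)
Your argument is correct. The paper does not prove this statement at all: it is recorded as a \emph{Fact} and simply cites \cite[\S10]{BR10}, \cite[\S2]{ChambertLoir06}, \cite[\S3.1]{FR09}. Your proposal therefore does not duplicate the paper's proof but rather supplies one, and it does so using only ingredients already present in Section~\ref{sec:facts}: the identity \eqref{eq:equilibrium}, the cohomological relation \eqref{eq:cohomological}, the Laplacian formula \eqref{eq:Hsia}, the functoriality $\Delta f^*=f^*\Delta$, and the principle (used later in Lemma~\ref{th:Riesz}, with reference to \cite[Lemma~5.24]{BR10}) that a function on $\sP^1$ with vanishing Laplacian is constant. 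The functional equation $g_F\circ f=d\,g_F-T_F$ and the projection formula $f_*f^*=(\deg f)\Id$ are standard and your verification of the former via the homogeneous escape-rate function $G_F$ is clean. The no-atoms step is handled exactly as it should be: the potential $u=\int\log[\cdot,\cS']_{\can}\,\rd\mu_f(\cS')$ differs from the continuous $g_F$ by a constant, hence is itself continuous, which is incompatible with a logarithmic pole at any $z_0\in\bP^1$. What your approach buys over the paper's bare citation is a transparent, internal derivation showing that all three properties are formal consequences of the Green-function construction; what the citation approach buys is brevity, since these facts are indeed well documented in the references.
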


We define the $F$-kernel on $\sP^1$ to be
\begin{gather*}
 \Phi_F(\cS,\cS'):=\log[\cS,\cS']_{\can}-g_F(\cS)-g_F(\cS')
\end{gather*}
for $\cS,\cS'\in\sP^1$.
The function $\Phi_F$ is upper semicontinuous on $\sP^1\times\sP^1$, and
for each $\cS\in\sP^1\setminus\bP^1$, $\Phi_F(\cS,\cdot)$ is continuous on $\sP^1$
(see Fact \ref{th:separately}).
The comparison
\begin{gather*}
 \sup_{(\cS,\cS')\in\sP^1\times\sP^1}|\Phi_F(\cS,\cS')
-\log[\cS,\cS']_{\can}|\le2\sup_{\sP^1}g_F<\infty
 \end{gather*} 
holds, and from \eqref{eq:Hsia} and \eqref{eq:equilibrium},
$\Delta\Phi_F(\cdot,\cS)=\delta_{\cS}-\mu_f$ for each $\cS\in\sP^1$.
For a Radon measure $\mu$ on $\sP^1$,
the $F$-potential on $\sP^1$ and the $F$-energy of $\mu$ are 
\begin{gather*}
 U_{F,\mu}(\cdot):=\int_{\sP^1}\Phi_F(\cdot,\cS')\rd\mu(\cS'),\\
 I_F(\mu):=\int_{\sP^1} U_{F,\mu}\rd\mu,
\end{gather*}
respectively (see also \cite[\S 8.10]{BR10}, \cite[\S 2.4]{FR09}).
The function $U_{F,\mu}$ is upper semicontinuous on $\sP^1$ and 
satisfies the following
continuity property: for every $z_0\in\bP^1\setminus\{\infty\}$ and
every $r\ge 0$, if $\cS_r(z_0)$
is the point in $\sP^1$ corresponding to the disk $\cB_r(z_0):=\{z\in K:|z-z_0|\le r\}$,
we have
\begin{gather}
 \lim_{r\to 0}U_{F,\mu}(\cS_r(z_0))=U_{F,\mu}(z_0)\label{eq:continuity}
\end{gather}
(see \cite[Proposition 6.12]{BR10}).
By Fubini's theorem,
\begin{gather*}
 \Delta U_{F,\mu}=\mu-\mu(\sP^1)\mu_f.
\end{gather*}

A probability Radon measure $\mu$ on $\sP^1$ is called an 
{\itshape $F$-equilibrium mass distribution on $\sP^1$} if
the $F$-energy $I_F(\mu)$ of this $\mu$ equals
\begin{gather*}
 V_F:=\sup\{I_F(\nu):\nu\text{ is a probability Radon measure on }\sP^1\},
\end{gather*}
which is $>-\infty$ since $I_F(\Omega_{\can})>-\infty$.

We recall Baker and Rumely's characterization of $\mu_f$ 
as the unique solution of a Gauss variational problem;
see \cite[Theorem 8.67 and Proposition 8.70]{BR10} for non-archimedean
$K$. For a discussion of the Gauss variational problem, see e.g.\ \cite{ST97}.

\begin{lemma}\label{th:Frostman}
 There is a unique $F$-equilibrium mass distribution on $\sP^1$, which coincides with
 the equilibrium measure $\mu_f$ of $f$. Indeed, on $\sP^1$, 
 \begin{gather}
  U_{F,\mu_f}\equiv V_F.\label{eq:identity}
 \end{gather} 
\end{lemma}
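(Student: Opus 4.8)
The plan is to deduce both assertions of the lemma — that $\mu_f$ is an $F$-equilibrium mass distribution with $U_{F,\mu_f}\equiv V_F$ on $\sP^1$, and that it is the only one — from a single explicit potential-theoretic computation together with the energy minimum principle for the generalized Hsia kernel, exhibiting the maximizer directly rather than invoking an abstract compactness argument. The computation is this: put $p_{\mu_f}(\cdot):=\int_{\sP^1}\log[\cdot,\cS']_{\can}\rd\mu_f(\cS')$, so that, $\mu_f$ being a probability measure, $U_{F,\mu_f}=p_{\mu_f}-g_F-\int_{\sP^1}g_F\rd\mu_f$ on $\sP^1$. Integrating \eqref{eq:Hsia} against $\mu_f$ and commuting $\Delta$ past the integral gives $\Delta p_{\mu_f}=\mu_f-\Omega_{\can}$, which by \eqref{eq:equilibrium} equals $\Delta g_F$; hence $h:=p_{\mu_f}-g_F$ satisfies $\Delta h=0$. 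Since $p_{\mu_f}$ is upper semicontinuous — an integral, bounded above by $0$, of the upper semicontinuous functions $\cS\mapsto\log[\cS,\cS']_{\can}$ ($\cS'\in\sP^1$ fixed; cf.\ Fact \ref{th:separately}) — and $g_F$ is continuous, $h$ is upper semicontinuous, hence subharmonic, on the compact $\sP^1$, so by the maximum principle $h$ is a constant $C$. Therefore $U_{F,\mu_f}\equiv C-\int_{\sP^1}g_F\rd\mu_f=:c$ is a finite constant on $\sP^1$, and integrating against $\mu_f$ gives $I_F(\mu_f)=c$.

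Next I would check that $\mu_f$ maximizes the $F$-energy. For a probability Radon measure $\nu$ on $\sP^1$ with $I_F(\nu)=-\infty$, $I_F(\nu)\le I_F(\mu_f)$ is clear; otherwise, as $g_F$ is bounded, $\nu$ has finite logarithmic energy, and since $p_{\mu_f}=g_F+C$ is bounded the mass-zero signed measure $\sigma:=\nu-\mu_f$ also has finite logarithmic energy. Substituting $\nu=\mu_f+\sigma$ in $I_F(\nu)=\iint\log[\cdot,\cdot]_{\can}\rd\nu\rd\nu-2\int g_F\rd\nu$ and using $\iint\log[\cS,\cS']_{\can}\rd\mu_f(\cS')\rd\sigma(\cS)=\int p_{\mu_f}\rd\sigma=\int g_F\rd\sigma$ (as $\sigma(\sP^1)=0$), the $g_F$-terms cancel and one is left with
\begin{gather*}
 I_F(\nu)=I_F(\mu_f)+\iint_{\sP^1\times\sP^1}\log[\cS,\cS']_{\can}\rd\sigma(\cS)\rd\sigma(\cS').
\end{gather*}
By the energy minimum principle for the generalized Hsia kernel on $\sP^1$ — negativity of $\iint\log[\cdot,\cdot]_{\can}\rd\sigma\rd\sigma$ for finite-energy signed measures $\sigma$ of total mass $0$, with equality exactly when $\sigma=0$ — it follows that $I_F(\nu)\le I_F(\mu_f)$, with equality if and only if $\nu=\mu_f$. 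Hence $V_F=I_F(\mu_f)=c$, $\mu_f$ is the unique $F$-equilibrium mass distribution, and $U_{F,\mu_f}\equiv c=V_F$, which is \eqref{eq:identity}.

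The main obstacle is the energy minimum principle invoked above, namely the strict negative-definiteness of $\iint\log[\cdot,\cdot]_{\can}$ on mass-zero signed measures of finite energy: in the archimedean case this is the classical energy principle on $\bP^1(\bC)$, while in the non-archimedean case it is the substantive theorem of Baker and Rumely whose proof genuinely uses the tree structure of $\sP^1$ and the properties of the Hsia kernel (see \cite[\S 8]{BR10}). Two further points, routine but not entirely free, also need care: the Fubini interchanges must be confined to finite-energy measures (which is why the case $I_F(\nu)=-\infty$ is handled separately), and one uses both that $\Delta$ commutes with the integral defining $p_{\mu_f}$ and that an upper semicontinuous function with vanishing Laplacian on the compact $\sP^1$ is constant — both standard in the potential theory recalled in Section \ref{sec:facts}.
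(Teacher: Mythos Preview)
The paper does not prove this lemma; it is stated as a recalled result, with a citation to \cite[Theorem 8.67 and Proposition 8.70]{BR10} for the non-archimedean case (and implicitly to classical potential theory for the archimedean one). So there is no ``paper's own proof'' to compare against.

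Your argument is correct and is essentially a clean reconstruction of the standard Frostman-type proof underlying those references. The two halves are exactly the right ones: first, the computation $\Delta(p_{\mu_f}-g_F)=0$ together with upper semicontinuity and the maximum principle gives $U_{F,\mu_f}\equiv c$, hence $I_F(\mu_f)=c$; second, the identity $I_F(\nu)=I_F(\mu_f)+\iint\log[\cdot,\cdot]_{\can}\,\rd\sigma\,\rd\sigma$ for $\sigma=\nu-\mu_f$ reduces uniqueness and maximality to the strict negative-definiteness of the logarithmic energy on mass-zero signed measures of finite energy. You correctly flag that this last input is the substantive one and point to \cite[\S 8]{BR10} for it (in the archimedean case it is classical). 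Two small remarks on presentation: when you justify that $p_{\mu_f}$ is upper semicontinuous, what you actually use is the \emph{joint} upper semicontinuity of $(\cS,\cS')\mapsto\log[\cS,\cS']_{\can}$ from Fact~\ref{th:separately} (via Fatou), not merely separate continuity; and the phrase ``upper semicontinuous, hence subharmonic'' should be read as ``upper semicontinuous with $\Delta h=0\ge 0$, hence subharmonic'', which is what you mean. Neither affects the validity of the argument.
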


The functions
$\Phi_F,U_{F,\mu}$ and $g_F$ depend on the lift $F$ of $f$.
We will now introduce more canonical functions $\Phi_f$, $U_{\mu}$, and $g_f$, 
which do not depend on the choice of the lift $F$.
The $f$-kernel on $\sP^1$ 
(the negative of the Arakelov Green function for $f$ in \cite[\S10.2]{BR10}) is
\begin{gather*}
 \Phi_f:=\Phi_F-V_F.
\end{gather*}
It is independent of the choice of $F$. 
For each Radon measure $\mu$ on $\sP^1$, 
we define the $f$-potential
\begin{gather*}
 U_{\mu}:=\int_{\sP^1}\Phi_f(\cdot,\cS')\rd\mu(\cS')
\end{gather*}
on $\sP^1$. We still have $\Delta U_{\mu}=\mu-\mu(\sP^1)\mu_f$. 
From Lemma \ref{th:Frostman},

\begin{lemma}\label{th:canonical}
For each Radon measure $\mu$ on $\sP^1$, we have
$U_{\mu}\ge 0$ on $\supp\mu$ if and only if $\mu=\mu_f$.
Moreover, $U_{\mu_f}\equiv 0$ on $\sP^1$.
\end{lemma}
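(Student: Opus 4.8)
The plan is to reduce the statement to Baker and Rumely's variational characterization of $\mu_f$ recorded in Lemma~\ref{th:Frostman}, the only extra ingredient being the bookkeeping relation $\Phi_f=\Phi_F-V_F$ between the two kernels, and hence $U_\mu=U_{F,\mu}-\mu(\sP^1)V_F$ between the two potentials. First, for the ``Moreover'' assertion: integrating $\Phi_f=\Phi_F-V_F$ in the second variable against the probability measure $\mu_f$ yields $U_{\mu_f}=U_{F,\mu_f}-V_F$ on $\sP^1$, and $U_{F,\mu_f}\equiv V_F$ by the identity \eqref{eq:identity}, so $U_{\mu_f}\equiv 0$ on $\sP^1$. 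This also settles the ``if'' direction of the equivalence: if $\mu=\mu_f$ then $U_\mu\equiv 0\ge 0$ everywhere, in particular on $\supp\mu$.

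For the ``only if'' direction, let $\mu$ be a probability Radon measure with $U_\mu\ge 0$ on $\supp\mu$; we run the standard uniqueness argument for the Gauss variational problem. Since $U_\mu$ is upper semicontinuous on the compact space $\sP^1$ it is bounded above, and since $\mu$ is carried by $\supp\mu$, the integral $\int_{\sP^1}U_\mu\,\rd\mu=\int_{\supp\mu}U_\mu\,\rd\mu$ is a well-defined element of $[0,+\infty]$. By $U_\mu=U_{F,\mu}-V_F$ and $\mu(\sP^1)=1$ this integral equals $I_F(\mu)-V_F$, so $I_F(\mu)=V_F+\int_{\sP^1}U_\mu\,\rd\mu\ge V_F$. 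On the other hand $I_F(\mu)\le V_F$ by the definition of $V_F$ as the supremum of $I_F$ over probability Radon measures. Hence $I_F(\mu)=V_F$ (in particular $I_F(\mu)$ is finite), i.e.\ $\mu$ is an $F$-equilibrium mass distribution on $\sP^1$, and the uniqueness part of Lemma~\ref{th:Frostman} forces $\mu=\mu_f$.

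The argument is short, so there is no single hard step; the point requiring care is the handling of extended real values. A priori $U_\mu$ may take the value $-\infty$ away from $\supp\mu$ and $I_F(\mu)$ may equal $-\infty$, so one cannot integrate $U_\mu$ against an arbitrary measure or rearrange freely; it is precisely the hypothesis $U_\mu\ge 0$ on $\supp\mu$ that makes $\int_{\sP^1}U_\mu\,\rd\mu$ meaningful in $[0,+\infty]$ and drives the comparison with $V_F=\sup_\nu I_F(\nu)$. One should also note that the constant-$V_F$ shift relating the two potentials and energies matches up only because $\mu$ has total mass $1$, which is the case of interest here. No input beyond Lemma~\ref{th:Frostman} and the definitions of $\Phi_f$, $U_\mu$, $V_F$ and $I_F$ is needed.
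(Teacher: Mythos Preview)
Your argument is correct and is exactly what the paper intends: the lemma is stated immediately after Lemma~\ref{th:Frostman} with the words ``From Lemma~\ref{th:Frostman},'' in lieu of a proof, and your write-up simply unpacks that deduction via the shift $\Phi_f=\Phi_F-V_F$. Your explicit restriction to probability measures is appropriate---as you observe, the normalization is needed for the energy comparison, and it is the only case used in the paper (the lemma as literally stated for arbitrary Radon measures would fail, e.g.\ for $\mu=2\mu_f$).
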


The dynamical Green function $g_f$ of $f$ (a canonical version of $g_F$) is
defined as
\begin{gather*}
 g_f(\cS):=g_F(\cS)+\frac{1}{2}V_F=\frac{1}{2}(\log[\cS,\cS]_{\can}-\Phi_f(\cS,\cS)),
\end{gather*}
which is independent of the choice of $F$ and
still satisfies 
\begin{gather}
 \Delta g_f=\mu_f-\Omega_{\can}.\label{quasipotential} 
\end{gather}
For every $(\cS,\cS')\in\sP^1\times\sP^1$,
\begin{gather*}
 \Phi_f(\cS,\cS')=\log[\cS,\cS']_{\can}-g_f(\cS)-g_f(\cS').
\end{gather*}

Our definition \eqref{eq:equilibrium} of $\mu_f$ 
agrees with Favre and Rivera-Letelier's \cite[Proposition-D\'efinition 3.2]{FR09}:

\begin{lemma}\label{th:recover}
For every $\cS\in\sP^1\setminus\bP^1$, weakly on $\sP^1$,
\begin{gather*}
  \lim_{k\to\infty}\frac{(f^n)^*\delta_{\cS}}{d^n}=\mu_f.
\end{gather*}
\end{lemma}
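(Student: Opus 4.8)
The plan is to deduce Lemma~\ref{th:recover} from the variational characterization of $\mu_f$ in Lemma~\ref{th:canonical}, using the potential-theoretic machinery already set up. Fix $\cS_0\in\sP^1\setminus\bP^1$ and set $\nu_n:=d^{-n}(f^n)^*\delta_{\cS_0}$; each $\nu_n$ is a probability Radon measure on $\sP^1$ because $\deg f^n=d^n$. First I would compute the $f$-potential of $\nu_n$. By functoriality of $\Delta$ under pullback and the identity $\Delta\Phi_f(\cdot,\cS)=\delta_{\cS}-\mu_f$, together with $f^*\mu_f=d\,\mu_f$, one gets $\Delta U_{\nu_n}=\nu_n-\mu_f=\Delta\bigl(d^{-n}(f^n)^*\Phi_f(\cdot,\cS_0)\bigr)$, so $U_{\nu_n}$ and $d^{-n}(f^n)^*\Phi_f(\cdot,\cS_0)$ differ by a constant on $\sP^1$. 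Alternatively, and more concretely, write $\Phi_f(\cdot,\cS_0)=\log[\cdot,\cS_0]_{\can}-g_f(\cdot)-g_f(\cS_0)$ and use Proposition~\ref{th:extension} (with $\phi_1=f^n$, $\phi_2\equiv\cS_0$ when $\cS_0\in\bP^1$, or more precisely the pullback formula for $\log[\cdot,\cS_0]_{\can}$) to express $d^{-n}(f^n)^*\log[\cdot,\cS_0]_{\can}$ in terms of the roots of $f^n(\cdot)=\cS_0$; combined with \eqref{eq:green} and $g_{F^n}=g_F$ this identifies the constant.

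The key quantitative step is a uniform bound on this difference. Since $\Phi_f\le\Phi_F\le\log[\cdot,\cdot]_{\can}+$ (something controlled by $\sup g_F$, hence $\le 0$ up to an additive constant from $V_F$), and $\log[\cdot,\cdot]_{\can}\le 0$, we have $\Phi_f(\cdot,\cS_0)\le C$ for a constant $C$ independent of $\cS_0$; pulling back and dividing by $d^n$ shows the additive constant $c_n$ relating $U_{\nu_n}$ to $0$ satisfies $c_n=O(d^{-n})\to 0$. Hence $U_{\nu_n}\to 0$ uniformly on $\sP^1$. Then for any weak limit $\nu$ of a subsequence of $(\nu_n)$, lower semicontinuity/Fatou-type arguments for the energy (as in the proof of Lemma~\ref{th:Frostman}) give $U_\nu\ge 0$ on $\supp\nu$ — more carefully, one uses that $U_{\nu_n}\to 0$ and the upper semicontinuity of $\Phi_f$ to pass to the limit and conclude $U_\nu\equiv 0$, or at least $U_\nu\ge 0$ on $\supp\nu$. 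By Lemma~\ref{th:canonical} this forces $\nu=\mu_f$. Since the space of probability Radon measures on the compact space $\sP^1$ is weak-* compact and every subsequential limit equals $\mu_f$, the full sequence converges: $\nu_n\to\mu_f$ weakly.

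The main obstacle is the passage to the limit in the potential: $\Phi_f$ is only upper semicontinuous (and badly discontinuous on the diagonal of $(\sP^1\setminus\bP^1)^2$ by Fact~\ref{th:separately}), so $\mu\mapsto U_\mu$ and $\mu\mapsto I_f(\mu)$ are not weak-* continuous, only upper semicontinuous in the appropriate sense. The clean way around this is to avoid taking limits of potentials pointwise and instead argue at the level of the Laplacian: from $\Delta U_{\nu_n}=\nu_n-\mu_f$ and the uniform convergence $U_{\nu_n}\to 0$ (which I expect to be the technically delicate claim, requiring the comparison $|\Phi_F-\log[\cdot,\cdot]_{\can}|\le 2\sup g_F$ and the uniform convergence in \eqref{eq:green}), one gets that $\Delta U_{\nu_n}\to 0$ in the weak sense, i.e.\ $\nu_n-\mu_f\to 0$, which is exactly the assertion. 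This also sidesteps needing the full strength of the Gauss variational problem and makes the proof genuinely "purely local" in the spirit of the paper. I would present the argument in this second form, remarking that it recovers Favre--Rivera-Letelier's definition \cite[Proposition-D\'efinition 3.2]{FR09}.
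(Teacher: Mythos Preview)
Your recommended second approach is exactly the paper's proof: compute
\[
\Delta\bigl(d^{-n}\Phi_f(f^n(\cdot),\cS_0)\bigr)=d^{-n}(f^n)^*\delta_{\cS_0}-\mu_f,
\]
show $d^{-n}\Phi_f(f^n(\cdot),\cS_0)\to 0$ uniformly on $\sP^1$, and invoke continuity of $\Delta$ under uniform convergence. The paper works directly with $\Phi_f(f^n(\cdot),\cS_0)$ rather than naming it $U_{\nu_n}$, but these coincide; your detour through the variational Lemma~\ref{th:canonical} is, as you yourself note, unnecessary.

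Two points where your write-up is vague and should be tightened. First, the additive constant relating $U_{\nu_n}$ to $d^{-n}\Phi_f(f^n(\cdot),\cS_0)$ is exactly zero, not merely $O(d^{-n})$: integrate both against $\rd\mu_f$ and use $U_{\mu_f}\equiv 0$ together with $(f^n)_*\mu_f=\mu_f$. Second, and more importantly, your discussion of the ``key quantitative step'' only produces an \emph{upper} bound $\Phi_f(\cdot,\cS_0)\le C$; uniform convergence of $d^{-n}\Phi_f(f^n(\cdot),\cS_0)$ to $0$ needs a two-sided bound. The lower bound is precisely where the hypothesis $\cS_0\in\sP^1\setminus\bP^1$ enters: by Fact~\ref{th:separately}, $[\cS,\cS']_{\can}=0$ only when $\cS=\cS'\in\bP^1$, so for $\cS_0\notin\bP^1$ the continuous function $[\cdot,\cS_0]_{\can}$ is strictly positive on the compact space $\sP^1$, hence bounded below away from zero, giving $\sup_{\sP^1}|\Phi_f(\cdot,\cS_0)|<\infty$. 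Neither the comparison $|\Phi_F-\log[\cdot,\cdot]_{\can}|\le 2\sup g_F$ nor the uniform convergence in \eqref{eq:green} supplies this on its own.
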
 

\begin{proof}
 For every $\cS\in\sP^1$ and every $n\in\bN$, from the balanced property $f^*\mu_f=d\cdot\mu_f$,
 \begin{gather*}
  \Delta\Phi_f(f^n(\cdot),\cS)
  =(f^n)^*(\delta_{\cS}-\mu_f)=(f^n)^*\delta_{\cS}-d^n\mu_f
 \end{gather*}
 on $\sP^1$. Suppose that $\cS\in\sP^1\setminus\bP^1$. Then since $[\cS,\cS]_{\can}>0$,
 \begin{gather*}
  \sup_{\cS'\in\sP^1}|\Phi_f(f^n(\cS'),\cS)|\le|\log[\cS,\cS]_{\can}|+2\sup_{\sP^1}g_f<\infty,
 \end{gather*}
 so $\lim_{n\to\infty}\Phi_f(f^n(\cdot),\cS)/d^n=0$ uniformly on $\sP^1$.
 By the continuity of $\Delta$
 on uniformly convergent sequences of $\delta$-subharmonic functions 
 (for non-archimedean $K$, see \cite[Corollary 5.39]{BR10},
 \cite[Proposition 2.17]{FR09}), as $n\to\infty$,
 \begin{gather*}
  \frac{(f^n)^*\delta_{\cS}}{d^n}-\mu_f=\Delta\frac{1}{d^n}\Phi_f(f^n(\cdot),\cS)\to 0
 \end{gather*}
 weakly on $\sP^1$. 
\end{proof}

The Berkovich Fatou set $\sF(f)$ of $f$ is by definition
$\sP^1\setminus\sJ(f)$, which is open in $\sP^1$.
A Berkovich Fatou component $W$ of $f$ is a component of $\sF(f)$.
Given such a $W$, $f(W)$ is also a Berkovich Fatou component of $f$, and
so is each component of $f^{-1}(W)$.
We call $W$ a cyclic Berkovich Fatou component of $f$ if for some $p\in\bN$,
$f^p(W)=W$. 

For archimedean $K$, the classification of cyclic Fatou components
(immediate (super)attractive basins of attracting cycles,
immediate attractive basins of parabolic cycles, Siegel disks,
and Herman rings) of $f$
is essentially due to Fatou (cf.\ \cite[Theorem 5.2]{Milnor3rd}).
The following is its non-archimedean counterpart
due to Rivera-Letelier; see \cite[Proposition 2.16]{FR09} and 
its {\itshape esquisse de d\'emonstration}, and also \cite[Remark 7.10]{Benedetto10}.

\begin{theorem}\label{th:classification}
Suppose that $K$ is non-archimedean. Then each cyclic Berkovich Fatou component 
$W$ of $f$ satisfies either that $W$ contains an attracting periodic point of $f$ in $W\cap\bP^1$
$($attracting case$)$ or that $\deg(f^p:W\to W)=1$ for some $p\in\bN$ satisfying $f^p(W)=W$.
Moreover, only one case occurs.
In the former case, $W$ is called an immediate 
$($super$)$attractive
basin of $f$, and in the latter case, $W$ is called a singular domain of $f$.
\end{theorem}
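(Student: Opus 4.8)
The plan is to reduce at once to a \emph{fixed} component. Since the Berkovich Julia set is invariant under iteration, $W$ is a fixed Berkovich Fatou component of $f^p$ for $p$ the period of $W$, and — degrees being multiplicative under composition — both alternatives of the statement, read for $f^p$ and $W$, imply the corresponding ones for $f$; so assume $f(W)=W$. One first checks that $\partial W\subset\sJ(f)$ (since $W$ is a connected component of the open set $\sF(f)$) and that $\sJ(f)$ is totally invariant, whence $f(\partial W)\subset\sJ(f)\cap\overline W=\partial W$; hence for every compact $L\subset W$ the compact set $f^{-1}(L)$ is disjoint from the compact set $\partial W$, so $f^{-1}(L)\cap W$ is compact and $f:W\to W$ is proper. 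Being also open with discrete fibres, $f:W\to W$ is surjective and carries a well-defined degree $m:=\sum_{\cS\in f^{-1}(\cS_0)\cap W}\deg_{\cS}f\ge1$, independent of $\cS_0\in W$ (with $\deg_{\cS}f$ as in Fact~\ref{th:rigid}); by multiplicativity, $\deg(f^k:W\to W)=m^k$ for all $k\ge1$, so $W$ is a singular domain precisely when $m=1$, in which case there is nothing more to prove.

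So the substance is the case $m\ge2$, where the goal is an attracting periodic point of $f$ in $W\cap\bP^1$. Since $W\subset\sF(f)$, the iterates $f^n|_W$ form an equicontinuous family, so for each $\cS_0\in W$ the accumulation set $\omega(\cS_0)$ is a nonempty compact forward-invariant subset of $\overline W$ along which $f$ is non-expanding. I would combine this with two structural inputs: (i) $\sP^1$ is a compact $\bR$-tree, so the continuous self-map $f$ of the compact subtree $\overline W$ has a fixed point; and (ii) $f$ has only finitely many critical points in $\sP^1$ ($2d-2$ of them, suitably counted), so $m\ge2$ forces genuine ramification of $f:W\to W$, and a ramified proper self-map of a tree-domain behaves near its attracting set like $z\mapsto z^m$ and is thus strictly contracting there (no nondegenerate Berkovich annulus is preserved by such a map). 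Reconciling ``non-expanding on $\sF(f)$'' with ``forced contraction when $m\ge2$'' should collapse $\omega(\cS_0)$ to a single point $x_0$ with $f(x_0)=x_0$, attracting a neighbourhood in $W$. Finally $x_0\in\bP^1$: a fixed point of $f$ in $\sP^1\setminus\bP^1$ is of type II, III, or IV, and by Rivera-Letelier's description of such points — through the induced action on the space of directions, which is indifferent when the local degree is $1$ and has repelling directions when it exceeds $1$ — none of them attracts an open neighbourhood. Hence $x_0\in W\cap\bP^1$, and the contraction yields $|f'(x_0)|<1$, so $x_0$ is an attracting fixed point of $f$ lying in $W\cap\bP^1$.

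It remains to check that the two cases are mutually exclusive, i.e.\ that if $W$ contains an attracting periodic point $x_0\in W\cap\bP^1$ then $m\ge2$. Let $r$ be the period of $x_0$. If $x_0$ is superattracting, then $m^r=\deg(f^r:W\to W)\ge\deg_{x_0}f^r\ge2$, so $m\ge2$. Otherwise $0<|(f^r)'(x_0)|<1$; were $m=1$, then $\deg(f^r:W\to W)=m^r=1$, so $f^r:W\to W$ would be bijective, and its inverse $g:=(f^r|_W)^{-1}:W\to W$ an analytic self-map fixing $x_0$ with $|g'(x_0)|=|(f^r)'(x_0)|^{-1}>1$. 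But $\sP^1\setminus W\supseteq\sJ(f)$ is infinite (it contains $\supp\mu_f$, and $\mu_f$ is atomless), so a non-archimedean Schwarz lemma applies to the analytic self-map $g$ of $W$: being fixed by $g$, the point $x_0$ satisfies $|g'(x_0)|\le1$ — a contradiction. Hence $m\ge2$ and $W$ is not a singular domain, which completes the dichotomy. The genuine obstacle is input (ii) of the middle paragraph: converting ``$m\ge2$'' into ``there is an \emph{attracting} fixed point inside $W$'' is exactly Rivera-Letelier's delicate local analysis of $f$ on $\sP^1$ (see \cite[Proposition~2.16]{FR09}) and is not a consequence of soft tree topology.
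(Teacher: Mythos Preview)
The paper does not prove this theorem; it is quoted as a result of Rivera-Letelier, with references to \cite[Proposition~2.16]{FR09} (and its \emph{esquisse de d\'emonstration}) and \cite[Remark~7.10]{Benedetto10}. There is thus no proof in the paper to compare yours against --- the statement functions here as background.

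Your sketch is honest about its main gap: you flag that the implication ``$m\ge2\Rightarrow$ attracting fixed point in $W\cap\bP^1$'' is exactly Rivera-Letelier's local analysis and not derivable from soft tree arguments, and you cite the same source the paper does. That said, several of the surrounding steps are not correct as written. The claim that $\mu_f$ is atomless is false in general: $\mu_f$ has no atoms in $\bP^1$, but in the good-reduction case $\mu_f=\delta_{\cS_{\can}}$ and $\sJ(f)=\{\cS_{\can}\}$ is a single type~II point. The equicontinuity description of $\sF(f)$ is not the definition adopted here (the paper defines $\sJ(f)$ via the blow-up property), and in the Berkovich setting their equivalence is itself a theorem, not a tautology. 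The critical-point count $2d-2$ presumes characteristic~$0$ or tame ramification, whereas the theorem is stated in arbitrary characteristic. Finally, the Schwarz-lemma step for $g=(f^r|_W)^{-1}$ requires both that $g$ extends to an analytic self-map of $W$ and that a Schwarz-type inequality holds for self-maps of an arbitrary Berkovich Fatou component; neither is automatic without further structure on $W$. Since the core implication is already deferred to \cite{FR09}, the cleanest fix is simply to cite the result, as the paper does.
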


All of $E(f), \sJ(f)$, $\sF(f)$, and $\supp\mu_f$ are completely invariant under $f$. 
Here, a subset $E$ in $\sP^1$ is said to be
completely invariant under $f$ if $f(E)\subset E$ and $f^{-1}(E)\subset E$.
The following equality is fundamental.

\begin{theorem}\label{th:inclusion}
$\sJ(f)=\supp\mu_f$. 
Moreover, for each $a\in E(f)$, 
no weak limit point of $(\nu_n^a)$ on $\sP^1$ agrees with $\mu_f$.
\end{theorem}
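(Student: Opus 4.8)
The plan is to prove the two assertions separately, both by exploiting the variational characterization of $\mu_f$ in Lemmas \ref{th:Frostman} and \ref{th:canonical} together with the balanced/invariance property \eqref{eq:balanced} and the dynamical recovery Lemma \ref{th:recover}. For the equality $\sJ(f)=\supp\mu_f$, I would first establish the inclusion $\supp\mu_f\subset\sJ(f)$ by showing that $\mu_f$ cannot charge any Berkovich Fatou component $W$. Since $\mu_f$ is balanced and invariant under $f$ and each component of $f^{-1}(W)$ is again a Fatou component with $f$ mapping it onto $f(W)$, the forward orbit classes of a fixed Fatou component either eventually cycle or not; in the cyclic case one invokes Theorem \ref{th:classification} (in the non-archimedean case) or Fatou's classification (archimedean case), and in each of the resulting types—(super)attractive basin, singular domain in the non-archimedean case; (super)attractive/parabolic basin, Siegel disk, Herman ring in the archimedean case—one checks directly that the total $\mu_f$-mass distributed over the orbit forces $\mu_f(W)=0$: e.g. in a (super)attractive basin all mass would be pushed into the attracting cycle, contradicting that $\mu_f$ has no atoms in $\bP^1$ (Fact after \eqref{eq:equilibrium}), and in a singular/rotation domain the local degree-one dynamics preserves mass on an infinite forward orbit of disjoint components, again forcing it to vanish. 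For the reverse inclusion $\sJ(f)\subset\supp\mu_f$, I would argue that $\supp\mu_f$ is closed and completely invariant (stated in the excerpt just before the theorem), nonempty, and not reduced to a finite exceptional set, so that by the topological transitivity built into the definition of $\sJ(f)$ any point whose grand orbit accumulates on all of $\sP^1\setminus E(f)$ must lie in the closure of $\supp\mu_f=\supp\mu_f$; more concretely, for $\cS\in\sP^1\setminus\bP^1$ Lemma \ref{th:recover} gives $d^{-n}(f^n)^*\delta_\cS\to\mu_f$, and since $(f^n)^*\delta_\cS$ is supported on $\bigcup_n f^{-n}(\cS)$, the support of the limit is contained in the closure of the grand orbit of any non-classical point; choosing such $\cS\in\sJ(f)$ (which is nonempty and contains non-classical points) and using that $\sJ(f)$ is the closure of the grand orbit of a generic point, one gets $\sJ(f)\subset\supp\mu_f$. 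I expect this second inclusion to be the more delicate one, since it requires knowing $\sJ(f)\neq\emptyset$ and some minimality of the grand orbit closure; I would lean on the cited references \cite{FR09}, \cite{BR10} for the precise transitivity statement rather than reprove it.

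For the second assertion, fix $a\in E(f)$ and suppose toward a contradiction that some subsequence $(\nu_{n_j}^a)$ converges weakly to $\mu_f$. Since $a\in E(f)$, the set $\bigcup_{n}f^{-n}(a)$ is finite; write $\hat a$ for its (finite) closure in $\sP^1$, which is contained in $\bP^1$ and is completely invariant. By the definition \eqref{eq:roots} of $\nu_n^a$, every $w$ with $f^{n}(w)=a(w)=a$ lies in $f^{-n}(a)\subset\hat a$, so $\supp\nu_n^a\subset\hat a$ for all $n$; hence any weak limit is supported on the finite set $\hat a\subset\bP^1$, in particular is a finite convex combination of Dirac masses in $\bP^1$. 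But $\mu_f$ has no atoms in $\bP^1$, so $\mu_f$ cannot be such a measure—contradiction. The only point needing care is that $\nu_n^a$ counts roots of $f^n(\cdot)=a$ in $\bP^1$ with multiplicity and that these roots are exactly the preimages $f^{-n}(a)$; this is immediate because $a$ is a constant value here.

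The main obstacle, as noted, is the reverse inclusion $\sJ(f)\subset\supp\mu_f$: making precise the sense in which the grand orbit of a non-classical Julia point is dense in $\sJ(f)$, and ruling out that $\supp\mu_f$ is a proper completely invariant closed subset of $\sJ(f)$. I would handle this by invoking complete invariance of $\supp\mu_f$ under $f$ together with the (standard) fact that $\sJ(f)$ is the minimal closed completely invariant set containing at least three points—equivalently, that $\sJ(f)\setminus E(f)$ is contained in the closure of every grand orbit it meets—so that $\supp\mu_f\supset\sJ(f)$ once we know $\supp\mu_f$ is infinite, which follows since $\mu_f$ is atomless on $\bP^1$ yet a probability measure. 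Everything else is bookkeeping with the Laplacian identities and the classification theorems already recalled.
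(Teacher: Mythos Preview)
Your argument for the second assertion is correct and essentially matches the paper's (the paper phrases it as $\supp\nu\subset\sF(f)$, you as $\nu$ being purely atomic on $\bP^1$; both work). For the first assertion, however, you have the two inclusions reversed in difficulty, and your use of Lemma~\ref{th:recover} is assigned to the wrong direction. The inclusion $\sJ(f)\subset\supp\mu_f$ is the \emph{easy} one, and the paper proves it in one line via exactly the minimality idea in your final paragraph: since $\mu_f$ is atomless on $\bP^1$ one has $\supp\mu_f\not\subset E(f)$, so picking $\cS_0\in\supp\mu_f\setminus E(f)$ and using the definition of $\sJ(f)$ together with complete invariance of $\supp\mu_f$ gives $\sJ(f)\subset\overline{\bigcup_n f^{-n}(\cS_0)}\subset\supp\mu_f$. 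Your Lemma~\ref{th:recover} argument, by contrast, yields $\supp\mu_f\subset\overline{\bigcup_n f^{-n}(\cS)}$ for $\cS\in\sP^1\setminus\bP^1$; if one can take $\cS\in\sJ(f)\setminus\bP^1$ this gives $\supp\mu_f\subset\sJ(f)$, not the inclusion you claim. Your parenthetical assertion that $\sJ(f)$ ``contains non-classical points'' is also false in general---the paper explicitly treats the case $\sJ(f)\subset\bP^1$ separately (then $\sF(f)$ is connected and by Theorem~\ref{th:classification} is a single immediate attractive basin, and one uses $\Omega_{\can}=\delta_{\cS_{\can}}$ with $\cS_{\can}\in\sF(f)$ to conclude). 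In the archimedean case the paper uses Marty's theorem on $(f^n)^*\Omega_{\can}$ rather than any classification.

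Your alternative route to $\supp\mu_f\subset\sJ(f)$ via the classification of Fatou components can be made to work, but as written it has two gaps. First, you do not handle wandering components, which genuinely occur for non-archimedean $K$ (there is no Sullivan theorem); the fix is the standard disjoint-orbit mass argument. Second, your singular-domain sentence is wrong: a \emph{cyclic} singular domain has a finite forward orbit, not an infinite one. The correct argument is that $\deg(f^p:W\to W)=1$ combined with $(f^p)^*\mu_f=d^p\mu_f$ gives $d^p\mu_f(W)=\mu_f(W)$, forcing $\mu_f(W)=0$. With these repairs your classification approach is a valid (if heavier) alternative to the paper's, trading Marty's theorem and the $\sJ(f)\subset\bP^1$ case split for a uniform measure-theoretic argument on the Fatou side.
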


\begin{proof}
Since $\mu_f$ has no atoms in $\bP^1$ and $E(f)$ is a countable subset in $\bP^1$,
$\supp\mu_f\not\subset E(f)$. Then
$\sJ(f)\subset\overline{\bigcup_{n\in\bN}f^{-n}((\supp\mu_f)\setminus E(f))}$,
which is contained in $\supp\mu_f$. Hence $\sJ(f)\subset\supp\mu_f$.

For archimedean $K$, 
$\Omega_{\can}$ is the normalized Fubini-study metric on $\sP^1=\bP^1$.
By Marty's theorem \cite[Th\'eor\`eme 5]{Marty31}, which is an infinitesimal version of
Montel's theorem, $\sF(f)$ agrees with the maximal open subset in $\bP^1$
where the family of chordal derivatives of $f^n$, $n\in\bN$,
\begin{gather*}
\left\{\bP^1\ni z\mapsto \sqrt{\frac{(f^n)^*\Omega_{\can}}{\Omega_{\can}}(z)}=\lim_{w\to z}\frac{[f^n(z),f^n(w)]}{[z,w]}\in[0,\infty):n\in\bN\right\} 
\end{gather*}
is locally uniformly bounded.
Hence by the definition \eqref{eq:equilibrium} of $\mu_f$, 
we have $\sF(f)\subset\sP^1\setminus\supp\mu_f$,
i.e., $\supp\mu_f\subset\sJ(f)$.

Suppose that $K$ is non-archimedean.
If $\sJ(f)\subset\bP^1$, then $\sF(f)$ is itself the unique Berkovich 
Fatou component of $f$, which is completely invariant under $f$.
Since $\deg(f:F(f)\to F(f))=\deg f>1$, by Theorem \ref{th:classification},
$\sF(f)$ is the immediate attractive basin of an attracting fixed point $a\in\bP^1$. 
Since $\cS_{\can}\in\sP^1\setminus\bP^1\subset\sF(f)\setminus\{a\}$, 
we have $\bigcap_{N\in\bN}\overline{\bigcup_{n\ge N}f^{-n}(\cS_{\can})}\subset\partial\sF(f)=\sJ(f)$.
Moreover, since $\Omega_{\can}=\delta_{\cS_{\can}}$ in this case,
by the definition \eqref{eq:equilibrium} of $\mu_f$,
$\supp\mu_f\subset\bigcap_{N\in\bN}\overline{\bigcup_{n\ge N}f^{-n}(\cS_{\can})}$.
Hence $\supp\mu_f\subset\sJ(f)$.
Finally, if $\sJ(f)\not\subset\bP^1$, then by Lemma \ref{th:recover},
we have $\supp\mu_f\subset\overline{\bigcup_{n\in\bN}f^{-n}(\sJ(f)\setminus\bP^1)}$, 
which is contained in $\sJ(f)$. 

Hence we have $\supp\mu_f\subset\sJ(f)$ in both archimedean and non-archimedean cases, 
and the proof of the former assertion is complete.

Recall that for any $a\in E(f)$, the backward orbit $\bigcup_{n\in\bN}f^{-n}(a)$ is finite and
contained in $\sF(f)$. Hence any weak limit point $\nu=\lim_{j\to\infty}\nu_{n_j}^a$
has its support in $\sF(f)$, so $\nu\neq\mu_f$ by the former assertion.
\end{proof}

Finally, for a rational function $f\in K(z)$ on $\bP^1$ of degree $d>1$ and
a rational function $a\in K(z)$ on $\bP^1$, 
we introduce the (logarithmic) proximity function $\log[f^n,a]_{\can}(\cdot)$
of $f^n(\cdot)$ and $a(\cdot)$ weighted by $g_f$:
\begin{gather*}
 \Phi(f^n,a)_f(\cdot):=\log[f^n,a]_{\can}(\cdot)-g_f\circ f^n-g_f\circ a.
\end{gather*}
The function $\Phi(f^n,a)_f(\cdot)$ extends the function 
$z\mapsto \Phi_f(f^n(z),a(z))$ on $\bP^1$ continuously to $\sP^1$
and plays a crucial role in the rest of the paper. 
It agrees with $\Phi_f(f^n(\cdot),a)$ when $a$ is constant.
For each $n\in\bN$, we have the comparison
\begin{gather}
 \sup_{\sP^1}|\Phi(f^n,a)_f(\cdot)-\log[f^n,a]_{\can}(\cdot)|\le
 \sup_{\sP^1}|2g_f|<\infty.\label{eq:movingcomparison}
\end{gather} 

\begin{lemma}[{cf. \cite[(1.4)]{Sodin92}}]\label{th:Riesz}
 For every $n\in\bN$, 
 \begin{gather}
  \frac{1}{d^n}\Phi(f^n,a)_f(\cdot)
  =U_{(1+(\deg a)/d^n)\nu_n^a}-\frac{1}{d^n}U_{a^*\mu_f}
+\frac{1}{d^n}\int_{\sP^1}\Phi(f^n,a)_f(\cdot)\rd\mu_f\label{eq:original}
 \end{gather}
 on $\sP^1$. Similarly, the function
 $U_{a^*\mu_f}=a^*g_f+U_{a^*\Omega_{\can}}-\int_{\sP^1}(a^*g_f)\rd\mu_f$
 is continuous $($hence bounded$)$ on $\sP^1$.
\end{lemma}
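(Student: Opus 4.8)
The identity \eqref{eq:original} should be derived by the standard "Laplacian-matching + constant-fixing" scheme: compute the Laplacian of both sides, check they agree, conclude the two $\delta$-subharmonic functions differ by a constant, and then pin down that constant by integrating against $\mu_f$. First I would observe that Proposition~\ref{th:extension} applied to $\phi_1=f^n$ and $\phi_2=a$ gives
\[
 \Delta\log[f^n,a]_{\can}(\cdot)=\sum_{w\in\bP^1:f^n(w)=a(w)}\delta_w-(f^n)^*\Omega_{\can}-a^*\Omega_{\can},
\]
and that by definition $\sum_{w:f^n(w)=a(w)}\delta_w=(d^n+\deg a)\,\nu_n^a$. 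Subtracting $\Delta(g_f\circ f^n+g_f\circ a)$ and using the functoriality $\Delta(g_f\circ\phi)=\phi^*\Delta g_f=\phi^*(\mu_f-\Omega_{\can})$ (from \eqref{quasipotential}) for $\phi\in\{f^n,a\}$, together with the balanced property $(f^n)^*\mu_f=d^n\mu_f$ from \eqref{eq:balanced}, the $(f^n)^*\Omega_{\can}$ and $a^*\Omega_{\can}$ terms cancel and one gets
\[
 \Delta\Phi(f^n,a)_f(\cdot)=(d^n+\deg a)\,\nu_n^a-d^n\mu_f-a^*\mu_f
\]
on $\sP^1$. Dividing by $d^n$, the right-hand side is exactly $\Delta\bigl(U_{(1+(\deg a)/d^n)\nu_n^a}-\tfrac{1}{d^n}U_{a^*\mu_f}\bigr)$, since $\Delta U_\mu=\mu-\mu(\sP^1)\mu_f$ and the total masses match: $(1+(\deg a)/d^n)\cdot 1$ for the first term and $(\deg a)/d^n$ for the second, whose difference is $1$, matching the $\mu_f$-coefficient $1$ on the left.

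Next, since $\sP^1$ is connected, two $\delta$-subharmonic functions with the same Laplacian differ by an additive constant; hence
\[
 \frac{1}{d^n}\Phi(f^n,a)_f(\cdot)=U_{(1+(\deg a)/d^n)\nu_n^a}-\frac{1}{d^n}U_{a^*\mu_f}+C_n
\]
for some $C_n\in\bR$. To evaluate $C_n$ I would integrate both sides against $\mu_f$. By Lemma~\ref{th:canonical}, $U_{\mu_f}\equiv 0$ on $\sP^1$; by Fubini's theorem (symmetry of $\Phi_f$) $\int_{\sP^1}U_\mu\,\rd\mu_f=\int_{\sP^1}U_{\mu_f}\,\rd\mu=0$ for any finite measure $\mu$, so the two potential terms integrate to $0$ against $\mu_f$. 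Therefore $C_n=\frac{1}{d^n}\int_{\sP^1}\Phi(f^n,a)_f(\cdot)\,\rd\mu_f$, which is exactly \eqref{eq:original}. (One should note that this integral is finite: $\Phi(f^n,a)_f$ is upper semicontinuous on the compact space $\sP^1$, hence bounded above, and by \eqref{eq:movingcomparison} it is bounded below except near the finitely many $w$ with $f^n(w)=a(w)$, where $\log[f^n,a]_{\can}$ has logarithmic poles that are $\mu_f$-integrable since $\mu_f$ has no atoms in $\bP^1$ and $U_{F,\nu_n^a}$ is finite $\mu_f$-a.e.)

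For the last sentence — the continuity of $U_{a^*\mu_f}$ — I would unwind the definition $\Phi_f(\cS,\cS')=\log[\cS,\cS']_{\can}-g_f(\cS)-g_f(\cS')$ and integrate in $\cS'$ against $a^*\mu_f$, which has total mass $d=\deg a$; wait, more precisely $a^*\mu_f$ has mass $\deg a$, so
\[
 U_{a^*\mu_f}(\cdot)=\int_{\sP^1}\log[\cdot,\cS']_{\can}\,\rd(a^*\mu_f)(\cS')-(\deg a)\,g_f(\cdot)-\int_{\sP^1}(g_f\circ a)\,\rd\mu_f,
\]
and by the change of variables $\cS'=a(\cS'')$ the first integral equals $\int_{\sP^1}\log[\cdot,a(\cS'')]_{\can}\,\rd\mu_f(\cS'')$. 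This last quantity, as a function on $\sP^1$, is the potential $U_{a^*\Omega_{\can}}$ up to the contribution of $\Delta$-primitives; the cleanest route is to write $U_{a^*\mu_f}=a^*g_f+U_{a^*\Omega_{\can}}-\int_{\sP^1}(a^*g_f)\,\rd\mu_f$ as claimed — one checks this by taking $\Delta$ of both sides ($\Delta(a^*g_f)=a^*(\mu_f-\Omega_{\can})$, $\Delta U_{a^*\Omega_{\can}}=a^*\Omega_{\can}-(\deg a)\mu_f$, summing to $a^*\mu_f-(\deg a)\mu_f=\Delta U_{a^*\mu_f}$) and fixing the constant by integrating against $\mu_f$ — and then continuity follows because $g_f$ is continuous on $\sP^1$, $a$ is continuous, and $U_{a^*\Omega_{\can}}$ is continuous by the standard fact that the potential of a measure with continuous potential density pulls back continuously (or directly from \eqref{eq:continuity} and the absolute continuity of $a^*\Omega_{\can}$ with respect to $\Omega_{\can}$ with bounded density away from critical values). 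The main obstacle is purely bookkeeping: making sure every total mass is correct ($\nu_n^a$ is a probability measure on $d^n+\deg a$ roots, $a^*\mu_f$ and $a^*\Omega_{\can}$ have mass $\deg a$) so that the $\mu_f$-coefficients balance, and justifying the Fubini step and the finiteness of $\int_{\sP^1}\Phi(f^n,a)_f\,\rd\mu_f$ given the logarithmic singularities; none of this is deep, but it must be done carefully.
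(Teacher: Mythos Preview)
Your proposal is correct and follows essentially the same route as the paper's own proof: compute $\Delta\Phi(f^n,a)_f$ from \eqref{eq:movingext} and \eqref{quasipotential}, use $(f^n)^*\mu_f=d^n\mu_f$ to match it with $\Delta\bigl(U_{(d^n+\deg a)\nu_n^a}-U_{a^*\mu_f}\bigr)$, conclude the difference is constant, and fix the constant by integrating against $\mu_f$ using Fubini and $U_{\mu_f}\equiv 0$; the second assertion is handled by the same Laplacian-matching scheme applied to $U_{a^*\mu_f}$ and $a^*g_f+U_{a^*\Omega_{\can}}$. The only point the paper makes explicit that you pass over is that, in the non-archimedean case, the step ``same Laplacian $\Rightarrow$ differ by a constant'' is first obtained on $\sP^1\setminus\bP^1$ and then extended to all of $\sP^1$ via the continuity property \eqref{eq:continuity}; and for the continuity of $U_{a^*\Omega_{\can}}$ you can avoid your hand-wavy appeal to ``bounded density'' by noting (from \eqref{eq:cohomological}) that $U_{a^*\Omega_{\can}}$ differs from the continuous function $T_{F_a}-(\deg a)g_f$ by a constant.
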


\begin{proof}
 For each $n\in\bN$, from \eqref{eq:movingext} and \eqref{quasipotential},
\begin{gather*}
 \Delta\Phi(f^n,a)_f(\cdot)
 =(d^n+\deg a)\nu_n^a-(f^n)^*\mu_f-a^*\mu_f,
\end{gather*}
and using the balanced property $f^*\mu_f=d\cdot\mu_f$, we have
\begin{gather*}
  \Delta\Phi(f^n,a)_f(\cdot)
 =\Delta(U_{(d^n+\deg a)\nu_n^a}-U_{a^*\mu_f}).
\end{gather*}
 Hence the function
 \begin{gather*}
  \frac{1}{d^n}\Phi(f^n,a)_f(\cdot)
  -(U_{(1+(\deg a)/d^n)\nu_n^a}(\cdot)-\frac{1}{d^n}U_{a^*\mu_f}(\cdot))
 \end{gather*}
 is constant on $\sP^1$ (for non-archimedean $K$, this holds on $\sP^1\setminus\bP^1$
 by a basic property of $\Delta$ (see \cite[Lemma 5.24]{BR10}, \cite[\S 2.4]{FR09}) and indeed
 on $\sP^1$ by continuity \eqref{eq:continuity}). 
We determine the constant by
integrating this against $\rd\mu_f$ on $\sP^1$: by Fubini's
theorem and the fact that $U_{\mu_f}\equiv 0$, the integrals of the second and third terms
in $\rd\mu_f$ vanish. Hence \eqref{eq:original} holds.

Similarly, from $\Delta U_{a^*\mu_f}=a^*\mu_f-(\deg a)\mu_f=\Delta(a^*g_f+U_{a^*\Omega_{\can}})$,
the function $U_{a^*\mu_f}-(a^*g_f+U_{a^*\Omega_{\can}})$ is constant on
$\sP^1$. The constant is determined by integrating this function against $\rd\mu_f$ on $\sP^1$.
\end{proof}

\section{Proof of Theorem \ref{th:julia}}
\label{sec:equidist}

Let $K$ be an algebraically closed field of any characteristic and complete with respect
to a non-trivial absolute value.
Let $f\in K(z)$ be a rational function on $\bP^1=\bP^1(K)$ of degree $d>1$, and
$a\in K(z)$ a rational function on $\bP^1$.
Let $(n_j)$ be a sequence in $\bN$
tending to $\infty$, and $\nu$ be any weak limit of a subsequence of
$(\nu_{n_j}^a)$ on $\sP^1=\sP^1(K)$. 
This is a probability Radon measure on $\sP^1$, and
the equidistribution property \eqref{eq:subgeneral} is equivalent to 
\begin{gather}
 \nu=\mu_f.\tag{\ref{eq:subgeneral}'}
\end{gather}
Taking a subsequence of $(n_j)$ if necessary, we can assume that
$\nu=\lim_{j\to\infty}\nu_{n_j}^a$ weakly on $\sP^1$ and that the limit
\begin{gather}
 \lim_{j\to\infty}\frac{1}{d^{n_j}}\int_{\sP^1}\Phi(f^{n_j},a)_f\rd\mu_f\label{eq:limitmean}
\end{gather}
exists in $[-\infty,0]$. 

\begin{lemma}\label{th:limsup}
 On $\sP^1$, 
 \begin{multline}
 \limsup_{j\to\infty}\frac{1}{d^{n_j}}\log[f^{n_j},a]_{\can}(\cdot)
=\limsup_{j\to\infty}\frac{1}{d^{n_j}}\Phi(f^{n_j},a)_f(\cdot)\\
\le U_{\nu}+\lim_{j\to\infty}\frac{1}{d^{n_j}}\int_{\sP^1}\Phi(f^{n_j},a)_f\rd\mu_f
\le\min\{U_{\nu},0\}.\label{eq:limsup}
\end{multline}
 Moreover, on $\sP^1\setminus\bP^1$,
\begin{gather}
 \lim_{j\to\infty}\frac{1}{d^{n_j}}\log[f^{n_j},a]_{\can}(\cdot)=U_{\nu}+\lim_{j\to\infty}\frac{1}{d^{n_j}}\int_{\sP^1}\Phi(f^{n_j},a)_f\rd\mu_f.\label{eq:lim}
\end{gather}
\end{lemma}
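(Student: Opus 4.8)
The plan is to start from the representation of the normalized proximity function supplied by Lemma~\ref{th:Riesz}, namely
\begin{gather*}
 \frac{1}{d^{n_j}}\Phi(f^{n_j},a)_f(\cdot)
  =U_{(1+(\deg a)/d^{n_j})\nu_{n_j}^a}-\frac{1}{d^{n_j}}U_{a^*\mu_f}
+\frac{1}{d^{n_j}}\int_{\sP^1}\Phi(f^{n_j},a)_f\rd\mu_f,
\end{gather*}
and to analyse the three terms on the right as $j\to\infty$. The first equality in \eqref{eq:limsup} is immediate from the comparison \eqref{eq:movingcomparison}, since the bounded term $2g_f$ disappears after division by $d^{n_j}\to\infty$; likewise the term $\frac{1}{d^{n_j}}U_{a^*\mu_f}$ tends to $0$ uniformly on $\sP^1$, because $U_{a^*\mu_f}$ is continuous (hence bounded) on the compact space $\sP^1$ by the second assertion of Lemma~\ref{th:Riesz}. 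The coefficient $1+(\deg a)/d^{n_j}\to1$, so $(1+(\deg a)/d^{n_j})\nu_{n_j}^a\to\nu$ weakly. Thus the whole problem reduces to understanding $\limsup_j U_{\nu_{n_j}^a}$ versus $U_\nu$.

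The key point is the upper semicontinuity of $f$-potentials under weak convergence of measures. Since $\Phi_f(\cS,\cS')=\log[\cS,\cS']_{\can}-g_f(\cS)-g_f(\cS')$ with $g_f$ continuous and $\log[\cS,\cS']_{\can}$ upper semicontinuous and bounded above on $\sP^1\times\sP^1$ (Fact~\ref{th:separately}), a standard argument (write $\Phi_f$ as a decreasing limit of continuous functions, or invoke the principle of descent for potentials) gives, for every $\cS\in\sP^1$,
\begin{gather*}
 \limsup_{j\to\infty}U_{\nu_{n_j}^a}(\cS)\le U_\nu(\cS).
\end{gather*}
Combining this with the vanishing of the other two contributions yields
\begin{gather*}
 \limsup_{j\to\infty}\frac{1}{d^{n_j}}\Phi(f^{n_j},a)_f(\cS)
 \le U_\nu(\cS)+\lim_{j\to\infty}\frac{1}{d^{n_j}}\int_{\sP^1}\Phi(f^{n_j},a)_f\rd\mu_f,
\end{gather*}
which is the middle inequality of \eqref{eq:limsup}. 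The final bound $\le\min\{U_\nu,0\}$ then follows by noting two things: first, the limit \eqref{eq:limitmean} lies in $[-\infty,0]$ (because $[f^{n_j},a]_{\can}\le1$ and $g_f$ is bounded, so $\frac{1}{d^{n_j}}\int\Phi(f^{n_j},a)_f\rd\mu_f$ is bounded above by a quantity tending to $0$), giving the bound $\le U_\nu$; second, since $\Phi(f^{n_j},a)_f(\cdot)\le\log[f^{n_j},a]_{\can}(\cdot)+2\sup_{\sP^1}|g_f|$ and $\log[f^{n_j},a]_{\can}\le 0$, the left side is itself $\le0$.

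For \eqref{eq:lim} on $\sP^1\setminus\bP^1$ one upgrades the $\limsup$ to a genuine limit. Here the decisive extra input is that for a point $\cS\in\sP^1\setminus\bP^1$ the kernel $\Phi_f(\cS,\cdot)$ (equivalently $\log[\cS,\cdot]_{\can}$) is \emph{continuous} on $\sP^1$, again by Fact~\ref{th:separately}. Therefore for such a fixed $\cS$ the functional $\mu\mapsto U_\mu(\cS)=\int\Phi_f(\cS,\cS')\,\rd\mu(\cS')$ is weakly continuous, so $\lim_j U_{\nu_{n_j}^a}(\cS)=U_\nu(\cS)$, not merely $\limsup$. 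Feeding this back into the Riesz-type decomposition and using that the other two terms converge (the integral term by our choice of subsequence, the $U_{a^*\mu_f}$ term to $0$) gives the asserted equality \eqref{eq:lim}. I expect the main obstacle to be the upper-semicontinuity step for the potentials: one must be careful that $\log[\cS,\cS']_{\can}$, although only upper semicontinuous and taking the value $-\infty$ on the diagonal of $\bP^1$, is bounded above, so that the descent argument applies uniformly and no mass escapes; the boundedness of $g_f$ and the compactness of $\sP^1$ are what make this routine rather than delicate.
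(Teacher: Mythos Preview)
Your proposal is correct and follows essentially the same route as the paper: the paper also takes $\limsup$ of the Riesz-type identity \eqref{eq:original}, proves the key upper-semicontinuity estimate $\limsup_j U_{\nu_{n_j}^a}\le U_\nu$ via the truncation $\max\{-N,\Phi_f(\cdot,\cS')\}$ (your ``decreasing limit of continuous functions''), and upgrades to an actual limit on $\sP^1\setminus\bP^1$ using the continuity of $\Phi_f(\cS,\cdot)$ for $\cS\notin\bP^1$. The only cosmetic difference is that the paper states the cutoff argument explicitly rather than citing the principle of descent.
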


\begin{proof}
 By a cut-off argument, on $\sP^1$, 
\begin{gather}
 \limsup_{j\to\infty}U_{\nu_{n_j}^a}\le U_{\nu};\label{eq:cutoff}
\end{gather} 
indeed, for every $N\in\bN$,
 $U_{\nu_{n_j}^a}\le\int_{\sP^1}\max\{-N,\Phi_f(\cdot,\cS')\}\rd\nu_{n_j}^a(\cS')$ on $\sP^1$,
 and since for every $\cS\in\sP^1$,
 the function $\cS'\mapsto\max\{-N,\Phi_f(\cS,\cS')\}$ is continuous on $\sP^1$, we have
\begin{gather*}
 \limsup_{j\to\infty}U_{\nu_{n_j}^a}\le\int_{\sP^1}\max\{-N,\Phi_f(\cdot,\cS')\}\rd\nu(\cS')
\end{gather*} 
 on $\sP^1$. Taking $N\to\infty$, we obtain \eqref{eq:cutoff} by the monotone convergence theorem.

 On the other hand, for every $\cS\in\sP^1\setminus\bP^1$, 
 the function $\cS'\mapsto\Phi_f(\cS,\cS')$ is continuous on $\sP^1$,
 so we have $\lim_{j\to\infty}U_{\nu_{n_j}^a}=U_{\nu}$ on $\sP^1\setminus\bP^1$.

 By the comparison \eqref{eq:movingcomparison} and $[f^n,a]_{\can}\le 1$,
\begin{gather*}
\limsup_{j\to\infty}\frac{1}{d^{n_j}}\Phi(f^{n_j},a)_f(\cdot)
=\limsup_{j\to\infty}\frac{1}{d^{n_j}}\log[f^{n_j},a]_{\can}(\cdot)\le 0
\end{gather*}
 on $\sP^1$. 
 Now taking $\limsup_{j\to\infty}$ of (\eqref{eq:original} for $n=n_j$), 
 we have \eqref{eq:limsup} on $\sP^1$, and also \eqref{eq:lim} on $\sP^1\setminus\bP^1$.
\end{proof}

If $a$ is constant, then by convention,
we identify $a$ with its value in $\bP^1$.

\begin{lemma}\label{th:const}
 If $a$ is constant, then $\int_{\sP^1}\Phi_f(f^n(\cdot),a)\rd\mu_f=0$
 for every $n\in\bN$, and $U_{\nu}\ge 0$ on $\sJ(f)$.
\end{lemma}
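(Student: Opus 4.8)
The statement has two parts, and both should follow from the explicit formula \eqref{eq:original} in Lemma \ref{th:Riesz} together with the special structure available when $a$ is a constant $a\in\bP^1$.

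First I would treat the claim $\int_{\sP^1}\Phi_f(f^n(\cdot),a)\rd\mu_f=0$. The key point is that when $a$ is constant, $\Phi(f^n,a)_f(\cdot)=\Phi_f(f^n(\cdot),a)$, and by the functoriality $\Delta\Phi_f(f^n(\cdot),a)=(f^n)^*(\delta_a-\mu_f)$ together with the balanced property $(f^n)^*\mu_f=d^n\mu_f$, the Laplacian of $\Phi_f(f^n(\cdot),a)$ equals $(f^n)^*\delta_a-d^n\mu_f$. On the other hand $\Delta U_{(f^n)^*\delta_a}=(f^n)^*\delta_a-d^n\mu_f$ as well (using $\mu((\sP^1))=d^n$ for $\mu=(f^n)^*\delta_a$), so $\Phi_f(f^n(\cdot),a)-U_{(f^n)^*\delta_a}$ is a constant on $\sP^1$ (the passage from "constant on $\sP^1\setminus\bP^1$" to "constant on $\sP^1$" uses the continuity property \eqref{eq:continuity} exactly as in the proof of Lemma \ref{th:Riesz}). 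Integrating this constant against $\rd\mu_f$ and invoking $U_{\mu_f}\equiv 0$ together with Fubini's theorem, the integral $\int_{\sP^1}U_{(f^n)^*\delta_a}\rd\mu_f=\int_{\sP^1}U_{\mu_f}\rd((f^n)^*\delta_a)=0$; but actually it is cleaner to plug $a$ constant directly into \eqref{eq:original}: there the $\int\Phi(f^n,a)_f\rd\mu_f$ term is by construction the constant that makes the identity integrate to $0$ against $\mu_f$, and since for constant $a$ one has $(d^n+\deg a)\nu_n^a=(f^n)^*\delta_a$ and $a^*\mu_f=0$ (as $\deg a=0$ forces $a^*\mu_f=0$ by the convention $\phi^*\mu:=0$ when $\deg\phi=0$), the right-hand side of \eqref{eq:original} collapses so that $\int_{\sP^1}\Phi_f(f^n(\cdot),a)\rd\mu_f$ is forced to vanish. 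I would write it this second way, citing Lemma \ref{th:Riesz} and the degree-$0$ pushforward convention.

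Second, for $U_\nu\ge 0$ on $\sJ(f)$ when $a$ is constant: having established that the mean term in \eqref{eq:limitmean} vanishes identically ($=0$ for every $n$, hence its limit is $0$), Lemma \ref{th:limsup} gives on $\sP^1\setminus\bP^1$ the clean equality $\lim_j\frac{1}{d^{n_j}}\log[f^{n_j},a]_{\can}(\cdot)=U_\nu$. So I need a lower bound $\liminf_j\frac{1}{d^{n_j}}\log[f^{n_j},a]_{\can}(\cdot)\ge 0$ on $\sJ(f)\cap(\sP^1\setminus\bP^1)$, and then extend to all of $\sJ(f)=\supp\mu_f$ (Theorem \ref{th:inclusion}) using upper semicontinuity of $U_\nu$ and density of $\sP^1\setminus\bP^1$ near points of $\sJ(f)$ — more precisely, since $U_\nu$ is u.s.c.\ and, by \eqref{eq:continuity}-type continuity, satisfies $\lim_{r\to0}U_\nu(\cS_r(z_0))=U_\nu(z_0)$ at $z_0\in\bP^1$, and since every $\cS_r(z_0)\in\sP^1\setminus\bP^1$, the bound $U_\nu\ge0$ on $\sJ(f)\setminus\bP^1$ propagates to $\sJ(f)$.

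**Main obstacle.** The crux is the lower bound $\liminf_j d^{-n_j}\log[f^{n_j},a]_{\can}\ge 0$ on the Berkovich Julia set for $a\in\bP^1$ constant. I expect this to come from Lemma \ref{th:recover} (which says $d^{-n}(f^n)^*\delta_\cS\to\mu_f$): writing $\log[f^{n},a]_{\can}(\cS) = \log\operatorname{(distance from $f^n(\cS)$ to $a$)}$ and using that $\supp\mu_f=\sJ(f)$ is where the preimages $f^{-n}(\cS)$ accumulate, one argues that for $\cS$ in (or near) $\sJ(f)$ the iterate $f^n(\cS)$ cannot converge to the single point $a$ fast enough to make the $d^{-n}\log$ of the chordal distance negative in the limit — the exponential weight $d^{-n}$ kills any polynomial-in-$n$ rate and only a genuinely exponential escape to $a$ could produce a negative limit, which would force $\cS$ into the Fatou set (an attracting or superattracting basin of $a$) by the classification Theorem \ref{th:classification}, contradicting $\cS\in\sJ(f)$. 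I would phrase this via the identity $U_\nu=\lim_j d^{-n_j}\log[f^{n_j},a]_{\can}$ on $\sP^1\setminus\bP^1$ from Lemma \ref{th:limsup} and the fact that $\Delta U_\nu = \nu-\mu_f$: if $U_\nu$ were negative somewhere on $\sJ(f)\setminus\bP^1$, one derives a contradiction with $\nu$ being a probability measure and with the Frostman-type characterization in Lemma \ref{th:canonical}. The technical care is entirely in handling the non-archimedean boundary $\bP^1$ versus $\sP^1\setminus\bP^1$ dichotomy, which the continuity property \eqref{eq:continuity} is designed to bridge.
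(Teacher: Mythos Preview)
Your proposal has genuine gaps in both parts.

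\textbf{Part 1 (vanishing integral).} Your route through \eqref{eq:original} is circular: that identity was \emph{obtained} by integrating against $\mu_f$ and using Fubini plus $U_{\mu_f}\equiv 0$, so integrating it again yields only the tautology $\frac{1}{d^n}\int\Phi_f(f^n,a)\rd\mu_f=\frac{1}{d^n}\int\Phi_f(f^n,a)\rd\mu_f$. Your first route (showing $\Phi_f(f^n(\cdot),a)-U_{(f^n)^*\delta_a}$ is constant, then integrating) determines that constant as $\int\Phi_f(f^n,a)\rd\mu_f$, but gives no second equation forcing it to be zero. The missing idea is the \emph{pushforward} invariance $f_*\mu_f=\mu_f$ from \eqref{eq:balanced}: the paper simply computes
\[
\int_{\sP^1}\Phi_f(f^n(\cdot),a)\rd\mu_f
=\int_{\sP^1}\Phi_f(\cdot,a)\rd\bigl((f^n)_*\mu_f\bigr)
=U_{\mu_f}(a)=0.
\]

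\textbf{Part 2 (nonnegativity on $\sJ(f)$).} Granting Part~1, you correctly obtain from \eqref{eq:lim} that $U_\nu=\lim_j d^{-n_j}\log[f^{n_j},a]_{\can}$ on $\sP^1\setminus\bP^1$. But your pointwise lower bound ``$f^n(\cS)$ cannot approach $a$ fast enough on $\sJ(f)$'' is not a proof: nothing in the classification Theorem~\ref{th:classification} or Lemma~\ref{th:recover} directly rules out $[f^{n_j}(\cS),a]_{\can}\le c^{d^{n_j}}$ for a single Julia point $\cS$, and your fallback via $\Delta U_\nu=\nu-\mu_f$ and Lemma~\ref{th:canonical} is too vague to extract a contradiction. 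The paper avoids pointwise control entirely by applying (reverse) Fatou's lemma to the integral: since $d^{-n_j}\Phi_f(f^{n_j}(\cdot),a)$ is uniformly bounded above, Part~1 and \eqref{eq:limsup} give
\[
0=\lim_j\frac{1}{d^{n_j}}\int_{\sP^1}\Phi_f(f^{n_j},a)\rd\mu_f
\le\int_{\sP^1}\limsup_j\frac{1}{d^{n_j}}\Phi_f(f^{n_j},a)\rd\mu_f
\le\int_{\{U_\nu<0\}\cap\sJ(f)}U_\nu\,\rd\mu_f,
\]
and since $\sJ(f)=\supp\mu_f$ and $\{U_\nu<0\}$ is open, the right side is strictly negative unless $\{U_\nu<0\}\cap\sJ(f)=\emptyset$.
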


\begin{proof}
Let $a\in\bP^1$. Then for every $n\in\bN$, by the invariance 
$f_*\mu_f=\mu_f$ and the fact that
$U_{\mu_f}\equiv 0$ on $\sP^1$, we have
\begin{gather*}
 \int_{\sP^1}\Phi_f(f^n(\cdot),a)\rd\mu_f=U_{(f^n)_*\mu_f}(a)=U_{\mu_f}(a)=0.
\end{gather*}
Hence by Fatou's lemma and \eqref{eq:limsup}, this implies that
\begin{multline*}
 0=\lim_{j\to\infty}\frac{1}{d^{n_j}}\int_{\sP^1}\Phi_f(f^{n_j}(\cdot),a)\rd\mu_f\\
\le\int_{\sP^1}\limsup_{j\to\infty}\frac{1}{d^{n_j}}\Phi_f(f^{n_j}(\cdot),a)\rd\mu_f\le
\int_{\{U_{\nu}<0\}\cap\sJ(f)}U_{\nu}\rd\mu_f.
\end{multline*}
Since $\sJ(f)\subset\supp\mu_f$ (by Theorem \ref{th:inclusion}),
$\{U_{\nu}<0\}\cap\sJ(f)=\emptyset$.
\end{proof}

We show the following counterpart of Lemma \ref{th:const} for non-constant $a$.

\begin{lemma}\label{th:negativeFatou}
 If $a$ is non-constant, then $U_{\nu}\ge 0$ on $\sJ(f)$.
\end{lemma}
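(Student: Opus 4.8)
The plan is to run the argument of Lemma~\ref{th:const} with one substitution. In that proof the constancy of $a$ was used only through the exact identity $\int_{\sP^1}\Phi_f(f^n(\cdot),a)\,\rd\mu_f=0$, valid for every $n$. For non-constant $a$ I would replace it by the asymptotic statement that
\[
 C:=\lim_{j\to\infty}\frac{1}{d^{n_j}}\int_{\sP^1}\Phi(f^{n_j},a)_f\,\rd\mu_f=0 .
\]
Granting $C=0$, one finishes exactly as in Lemma~\ref{th:const}: the functions $\tfrac1{d^{n_j}}\Phi(f^{n_j},a)_f$ are uniformly bounded above (by \eqref{eq:movingcomparison} and $[f^{n_j},a]_{\can}\le1$), so the reverse Fatou lemma and \eqref{eq:limsup} give $0=C\le\int_{\sP^1}\limsup_{j}\tfrac1{d^{n_j}}\Phi(f^{n_j},a)_f\,\rd\mu_f\le\int_{\sJ(f)}\min\{U_\nu,0\}\,\rd\mu_f\le0$; hence $\min\{U_\nu,0\}=0$ holds $\mu_f$-a.e.\ on $\sJ(f)=\supp\mu_f$, and since $U_\nu$ is upper semicontinuous the open set $\{U_\nu<0\}$ misses $\supp\mu_f$, i.e.\ $U_\nu\ge0$ on $\sJ(f)$.

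Thus the real task is $C=0$. Since $f_*\mu_f=\mu_f$ and $g_f$ is bounded, the quantities $\int g_f\circ f^n\,\rd\mu_f=\int g_f\,\rd\mu_f$ and $\int g_f\circ a\,\rd\mu_f$ are finite constants, so by \eqref{eq:movingcomparison} the claim amounts to $\int_{\sP^1}\log[f^n,a]_{\can}(\cdot)\,\rd\mu_f=o(d^n)$; the inequality $[f^n,a]_{\can}\le1$ gives $C\le0$ gratis, leaving the lower bound — namely that the mass of $\mu_f$ does not pile up onto the moving solution set $\{z:f^n(z)=a(z)\}$ faster than sub-exponentially. Note that $\int_{\sJ(f)}U_\nu\,\rd\mu_f=\int_{\sP^1}U_{\mu_f}\,\rd\nu=0$, so $\int_{\sJ(f)}(U_\nu+C)\,\rd\mu_f=C$; since by \eqref{eq:lim} and \eqref{eq:continuity} one has $U_\nu\le-C$ on $\sP^1$, with $U_\nu(\cS)+C$ being exactly $\lim_j\tfrac1{d^{n_j}}\log[f^{n_j},a]_{\can}(\cS)\le0$ for $\cS\in\sP^1\setminus\bP^1$, the conclusion $C=0$ (and in fact $U_\nu\equiv0$ on $\sJ(f)$) would follow once one knows that $\tfrac1{d^{n_j}}\log[f^{n_j},a]_{\can}(\cS)\to0$ at $\mu_f$-almost every $\cS\in\sJ(f)$.

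This last assertion is the heart of the matter: it is a Diophantine-type non-concentration estimate for the proximity function along $\mu_f$ — an analogue over an arbitrary complete valued field of \eqref{eq:diophantine} — and it must be proved carefully because, by Remark~\ref{th:incoincidence}, $[f^n,a]_{\can}(\cS)$ is the continuous extension of $z\mapsto[f^n(z),a(z)]$ and not $[f^n(\cS),a(\cS)]_{\can}$, so the easy pushforward argument used for constant targets is unavailable. I see two lines of attack. One is to compare with a constant non-exceptional target $c\in\bP^1(K)\setminus E(f)$: by \eqref{eq:movingext}, $\Delta\log\!\big([f^n,a]_{\can}/[f^n,c]_{\can}\big)=(d^n+\deg a)\nu_n^a-d^n\nu_n^c-a^*\Omega_{\can}$, while $\int_{\sP^1}\log[f^n,c]_{\can}(\cdot)\,\rd\mu_f$ is a constant; pairing with $\mu_f=\Delta g_f+\Omega_{\can}$, using the classical constant-target equidistribution $\nu_n^c\to\mu_f$ (\cite{Brolin,Lyubich83,FLM83} for archimedean $K$, \cite{FR09} for non-archimedean $K$; see also Theorem~\ref{th:general}), the boundedness of $U_{a^*\mu_f}$ (Lemma~\ref{th:Riesz}) and $\nu_{n_j}^a\to\nu$, this reduces the estimate to the behaviour of $[f^n,a]_{\can}$ on $\supp\Omega_{\can}$ — for non-archimedean $K$, just at the Gauss point. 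The other is to bound the $\mu_f$-measure of an $e^{-\varepsilon d^n}$-neighbourhood of $\{f^n=a\}$ directly, using Fact~\ref{th:rigid} for the images of small Berkovich disks under $f^n$ and the modulus of continuity of $g_f$, with the classification of cyclic Berkovich Fatou components (Theorem~\ref{th:classification}) absorbing the residual mass near $E(f)$ and the singular domains, and then concluding by a Borel--Cantelli argument along the lacunary sequence $(d^n)$. Either way, I expect this non-concentration estimate to be the main obstacle.
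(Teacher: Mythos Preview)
Your reduction of the lemma to the assertion $C=0$ is valid, but you have not proved $C=0$: you only sketch two possible attacks and yourself flag the resulting ``non-concentration estimate'' as the main obstacle. This is a genuine gap. Neither sketch is carried to a proof, and the paper itself establishes $C=0$ only \emph{conditionally}, under the hypothesis $\nu=\mu_f$ (Lemma~\ref{th:movingvanish}), using the very same preimage-of-a-fixed-point trick that already proves Lemma~\ref{th:negativeFatou} directly. So you have replaced an elementary lemma by a harder-looking problem.

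The paper's proof of Lemma~\ref{th:negativeFatou} bypasses $C=0$ entirely. Argue by contradiction: suppose the open set $\{U_\nu<0\}$ meets $\sJ(f)$. A short preliminary step disposes of $E(f)\neq\emptyset$: if $z_1\in E(f)$, then some nonempty open subset of $\{U_\nu<0\}\cap\bP^1$ lies in the attractive basin of $z_1$, and on it \eqref{eq:limsup} forces $a\equiv z_1$, contradicting non-constancy. Now fix any fixed point $z_0\in\bP^1=\bP^1\setminus E(f)$ of $f$. Since backward orbits of non-exceptional points accumulate on $\sJ(f)$, and since $\mu_f$ has no atoms in $\bP^1$ while $\supp\mu_f=\sJ(f)$, the set $\bigl(\bigcup_n f^{-n}(z_0)\bigr)\cap\{U_\nu<0\}$ must be infinite; but $a^{-1}(z_0)$ is finite. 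Pick $z_{-N}\in f^{-N}(z_0)\cap\{U_\nu<0\}$ with $a(z_{-N})\neq z_0$. For all $n_j\ge N$ one has $[f^{n_j}(z_{-N}),a(z_{-N})]=[z_0,a(z_{-N})]$, a fixed positive number, so $\limsup_j d^{-n_j}\log[f^{n_j}(z_{-N}),a(z_{-N})]=0$; but \eqref{eq:limsup} bounds this $\limsup$ by $U_\nu(z_{-N})<0$. Contradiction. No integrals, no Fatou lemma, no Diophantine estimate --- only \eqref{eq:limsup}, the definition of $\sJ(f)$, and the finiteness of fibres of a non-constant rational map.
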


\begin{proof}
Assume that $\{U_{\nu}<0\}\cap\sJ(f)\neq\emptyset$. Then 
since $\{U_f < 0\}$ is open,
\begin{gather*}
 \bigcup_{n\in\bN}f^n(\{U_{\nu}<0\}\cap\bP^1)
 =\left(\bigcup_{n\in\bN}f^n(\{U_{\nu}<0\})\right)\cap\bP^1\supset\bP^1\setminus E(f).
\end{gather*}
If there exists $z_1\in E(f)$, 
then $\bigcup_{n\in\bN}f^n(\{U_{\nu}<0\}\cap\bP^1)$ intersects the
immediate attractive basin of $z_1$, so by \eqref{eq:limsup}, $a\equiv z_1$. 
This contradicts that $a$ is non-constant, and we have $E(f)=\emptyset$.

Let $z_0$ be a fixed point of $f$ in $\bP^1=\bP^1\setminus E(f)$.
Then by the assumption $\{U_{\nu}<0\}\cap\sJ(f)\neq\emptyset$ and
the definition of $\sJ(f)$, 
$\sJ(f)\cap\{U_{\nu}<0\}\subset
(\bigcap_{\ell\in\bN}\overline{\bigcup_{j\ge\ell}f^{-n}(z_0)})\cap\{U_{\nu}<0\}$.
Hence if $\#(\bigcup_{n\in\bN}f^{-n}(z_0)\cap\{U_{\nu}<0\})<\infty$, then
$\sJ(f)\cap\{U_{\nu}<0\}$ is a non-empty and finite subset in $\bP^1$.
Since $\sJ(f)\subset\supp\mu_f$ (by Theorem \ref{th:inclusion}),
this contradicts that $\mu_f$ has no atoms in $\bP^1$.

Hence there is an $N\in\bN$ such that 
$f^{-N}(z_0)\cap\{U_{\nu}<0\}\not\subset a^{-1}(z_0)$ since 
$\#(\bigcup_{n\in\bN}f^{-n}(z_0)\cap\{U_{\nu}<0\})=\infty$ and $\# a^{-1}(z_0)<\infty$.
Let $z_{-N}\in(f^{-N}(z_0)\cap\{U_{\nu}<0\})\setminus a^{-1}(z_0)$. 
Then 
\begin{gather*}
 \limsup_{j\to\infty}\frac{1}{d^{n_j}}\log[f^{n_j}(z_{-N}),a(z_{-N})]
=\limsup_{j\to\infty}\frac{1}{d^{n_j}}\log[z_0,a(z_{-N})]=0,
\end{gather*}
which contradicts \eqref{eq:limsup} at $z_{-N}$ since $U_{\nu}(z_{-N})<0$.

Hence $\{U_{\nu}<0\}\cap\sJ(f)=\emptyset$, and the proof is complete.
\end{proof}

\begin{lemma}\label{th:movingvanish}
If \eqref{eq:weaklim} holds, then 
\begin{gather}
 \lim_{j\to\infty}\frac{1}{d^{n_j}}\int_{\sP^1}\Phi(f^{n_j},a)_f(\cdot)\rd\mu_f=0.\label{eq:varilonmoving}
\end{gather}
Indeed, \eqref{eq:varilonmoving} holds for every $a\in\bP^1$ without assuming \eqref{eq:weaklim}.
\end{lemma}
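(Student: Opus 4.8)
The plan is to squeeze the limit \eqref{eq:limitmean} between $0$ from above and from below. The upper bound $\limsup_{j\to\infty}d^{-n_j}\int_{\sP^1}\Phi(f^{n_j},a)_f\,\rd\mu_f\le 0$ is immediate: by \eqref{eq:movingcomparison} and $[f^{n_j},a]_{\can}\le 1$ the integrand is bounded above by $\sup_{\sP^1}|2g_f|<\infty$ uniformly in $j$, and by \eqref{eq:limsup} (the chain of inequalities in Lemma \ref{th:limsup}) one has $\limsup_j d^{-n_j}\Phi(f^{n_j},a)_f\le\min\{U_\nu,0\}\le 0$ pointwise, so Fatou's lemma (applied to the uniformly bounded-above sequence) gives $\lim_j d^{-n_j}\int\Phi(f^{n_j},a)_f\,\rd\mu_f\le\int_{\sP^1}\limsup_j d^{-n_j}\Phi(f^{n_j},a)_f\,\rd\mu_f\le 0$. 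The point of the lemma is the matching lower bound, which is where the hypothesis \eqref{eq:weaklim}, i.e.\ $\nu=\mu_f$, enters.

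For the lower bound I would integrate the Riesz-type decomposition \eqref{eq:original} against $\rd\mu_f$. By Fubini's theorem,
\begin{gather*}
 \int_{\sP^1}U_{(1+(\deg a)/d^{n_j})\nu_{n_j}^a}\,\rd\mu_f
 =(1+(\deg a)/d^{n_j})\int_{\sP^1}U_{\mu_f}\,\rd\nu_{n_j}^a=0
\end{gather*}
since $U_{\mu_f}\equiv 0$ on $\sP^1$ (Lemma \ref{th:canonical}), and likewise $\int_{\sP^1}U_{a^*\mu_f}\,\rd\mu_f=\int_{\sP^1}U_{\mu_f}\,\rd(a^*\mu_f)=0$. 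Hence integrating \eqref{eq:original} gives the tautology
\begin{gather*}
 \frac{1}{d^{n_j}}\int_{\sP^1}\Phi(f^{n_j},a)_f\,\rd\mu_f
 =\frac{1}{d^{n_j}}\int_{\sP^1}\Phi(f^{n_j},a)_f\,\rd\mu_f,
\end{gather*}
so that route alone is circular; instead one must use a genuinely different test measure. The right choice is to integrate \eqref{eq:limsup} — or rather the pointwise limit \eqref{eq:lim} on $\sP^1\setminus\bP^1$ — against $\mu_f$ itself, combined with $\mu_f(\bP^1)=0$ (no atoms in $\bP^1$, and in fact $\mu_f$ charges no countable set; more to the point $\mu_f(\sP^1\setminus\bP^1)$ versus $\bP^1$ must be handled — in the non-archimedean case $\supp\mu_f=\sJ(f)$ may meet $\bP^1$, so one works with the inequality \eqref{eq:limsup} valid on all of $\sP^1$). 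Applying the reverse Fatou lemma (legitimate by the uniform upper bound) to \eqref{eq:limsup} and using $\nu=\mu_f$, hence $U_\nu=U_{\mu_f}\equiv 0$, yields
\begin{gather*}
 \int_{\sP^1}\Bigl(\limsup_{j\to\infty}\frac{1}{d^{n_j}}\Phi(f^{n_j},a)_f\Bigr)\rd\mu_f
 \le \int_{\sP^1}\Bigl(U_{\mu_f}+L\Bigr)\rd\mu_f = L,
\end{gather*}
where $L:=\lim_j d^{-n_j}\int_{\sP^1}\Phi(f^{n_j},a)_f\,\rd\mu_f$ is the limit we named in \eqref{eq:limitmean}. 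On the other hand, by Fatou's lemma applied from below to the sequence $d^{-n_j}\Phi(f^{n_j},a)_f$ (again uniformly bounded above, so $-$ the sequence is bounded below after the harmless sign flip, or one applies the reverse Fatou lemma directly), $L\le\int_{\sP^1}\limsup_j d^{-n_j}\Phi(f^{n_j},a)_f\,\rd\mu_f\le L$, forcing equality; then re-reading \eqref{eq:limsup} with $U_\nu=0$ collapses everything and shows $\limsup_j d^{-n_j}\Phi(f^{n_j},a)_f=0$ $\mu_f$-a.e., whence $L=0$. The cleaner packaging: from \eqref{eq:limsup} with $U_\nu=U_{\mu_f}=0$ one gets $0=U_\nu+L-L\le$ the integrand's limsup plus $\cdots$; I will organize the two Fatou applications so that $L\le 0$ and $0\le L$ both drop out, giving $L=0$.

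For the final sentence — \eqref{eq:varilonmoving} holds for every constant $a\in\bP^1$ with no hypothesis — I would invoke Lemma \ref{th:const}, which already gives the exact identity $\int_{\sP^1}\Phi_f(f^n(\cdot),a)\,\rd\mu_f=0$ for every $n\in\bN$ (via $f_*\mu_f=\mu_f$ and $U_{\mu_f}\equiv 0$), and recall that $\Phi(f^n,a)_f(\cdot)$ coincides with $\Phi_f(f^n(\cdot),a)$ when $a$ is constant; dividing by $d^{n_j}$ and letting $j\to\infty$ gives \eqref{eq:varilonmoving} trivially. The main obstacle is purely bookkeeping around the two Fatou-type estimates: making sure the sequence $d^{-n_j}\Phi(f^{n_j},a)_f$ is dominated from above by a fixed $\mu_f$-integrable function (it is, by the constant $\sup_{\sP^1}|2g_f|$ from \eqref{eq:movingcomparison}) so that both the direct and the reverse Fatou inequalities apply, and tracking that under the hypothesis $\nu=\mu_f$ every occurrence of $U_\nu$ in \eqref{eq:limsup} is identically $0$; once that is in place the squeeze is automatic. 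No measure-zero subtlety about $\bP^1$ versus $\sP^1\setminus\bP^1$ is actually needed, since \eqref{eq:limsup} is valid on all of $\sP^1$ and we integrate it over all of $\sP^1$.
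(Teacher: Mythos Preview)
Your lower bound $L\ge 0$ is not established; the Fatou squeeze is circular. Write $h:=\limsup_j d^{-n_j}\Phi(f^{n_j},a)_f$ and $L$ for the limit \eqref{eq:limitmean}. Reverse Fatou gives $L\le\int_{\sP^1} h\,\rd\mu_f$, while the pointwise bound $h\le U_\nu+L=L$ from \eqref{eq:limsup} (using $U_\nu=U_{\mu_f}\equiv 0$) gives $\int_{\sP^1} h\,\rd\mu_f\le L$. You conclude $\int h=L$ and $h=L$ $\mu_f$-a.e., but this is compatible with every value $L\in[-\infty,0]$; nothing forces $L=0$. The sentence ``re-reading \eqref{eq:limsup} with $U_\nu=0$ collapses everything and shows $h=0$ $\mu_f$-a.e.'' is unsupported: the only inequalities on the table are $h\le L$ and $h\le 0$, and neither yields $h\ge 0$ anywhere. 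The earlier attempt via integrating \eqref{eq:original} you correctly diagnose as a tautology, but the replacement suffers the same fate.

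The paper supplies the missing ingredient by producing a single \emph{classical} point at which $h=0$, which a purely measure-theoretic argument against $\mu_f$ cannot. Assuming by contradiction that $L<0$, so that (under $\nu=\mu_f$) $h\le L<0$ on all of $\sP^1$, one first argues $E(f)=\emptyset$: on the immediate basin of any $z_1\in E(f)$ the exponential decay $[f^{n_j}(\cdot),a(\cdot)]\to 0$ forces $a\equiv z_1$, contradicting that $a$ is non-constant. Then pick any fixed point $z_0\in\bP^1=\bP^1\setminus E(f)$; since $\#\bigcup_n f^{-n}(z_0)=\infty$ while $\#a^{-1}(z_0)<\infty$, there is $z_{-N}\in f^{-N}(z_0)\setminus a^{-1}(z_0)$. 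For $n_j\ge N$ one has $[f^{n_j}(z_{-N}),a(z_{-N})]=[z_0,a(z_{-N})]$, a fixed positive constant, so $d^{-n_j}\log[f^{n_j}(z_{-N}),a(z_{-N})]\to 0$, contradicting $h(z_{-N})\le L<0$. Your handling of the constant case via Lemma~\ref{th:const} is correct and matches the paper.
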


 \begin{proof}
If $a$ is constant, then this follows from the former assertion in Lemma \ref{th:const}
without assuming \eqref{eq:weaklim}.

Suppose that $a$ is non-constant. If \eqref{eq:weaklim} holds but 
\eqref{eq:varilonmoving} does not hold,
then by \eqref{eq:limsup} and $U_{\nu}=U_{\mu_f}\equiv 0$,
\begin{gather}
 \limsup_{j\to\infty}\frac{1}{d^{n_j}}\log[f^{n_j},a]_{\can}(\cdot)
\le U_{\nu}+\lim_{j\to\infty}\frac{1}{d^{n_j}}\int_{\sP^1}\Phi(f^{n_j},a)_f\rd\mu_f<0\label{eq:unifconv}
\end{gather}
on $\sP^1$. If there exists $z_1\in E(f)$, then \eqref{eq:unifconv} holds on the
immediate attractive basin of $z_1$, so $a\equiv z_1$.
This is a contradiction, and we have $E(f)=\emptyset$.

Let $z_0\in\bP^1=\bP^1\setminus E(f)$ be a fixed point of $f$.
Then $\infty>\#a^{-1}(z_0)<\#\bigcup_{n\in\bN}f^{-n}(z_0)=\infty$, so
there is an $N\in\bN$ such that 
$f^{-N}(z_0)\not\subset a^{-1}(z_0)$.
Let $z_{-N}\in f^{-N}(z_0)\setminus a^{-1}(z_0)$. 
Then 
\begin{gather*}
 \limsup_{j\to\infty}\frac{1}{d^{n_j}}\log[f^{n_j}(z_{-N}),a(z_{-N})]
=\limsup_{j\to\infty}\frac{1}{d^{n_j}}\log[z_0,a(z_{-N})]=0,
\end{gather*}
which contradicts \eqref{eq:unifconv} at $z_{-N}$.
\end{proof} 

We can now complete the proof of Theorem \ref{th:julia}.
 If (\ref{eq:support}) holds, then 
 by the latter assertion in Lemma \ref{th:const}, Lemma \ref{th:negativeFatou}, and
 Lemma \ref{th:canonical},
 the condition \eqref{eq:weaklim} holds. The reverse implication
 is by Theorem \ref{th:inclusion}.

 Suppose now that $K$ is non-archimedean. If (\ref{eq:weaklim}) holds, then
 $U_{\nu}=U_{\mu_f}\equiv 0$ on $\sP^1$, and  by \eqref{eq:lim} and
 Lemma \ref{th:movingvanish}, we have
\begin{gather*}
\lim_{j\to\infty}\frac{1}{d^{n_j}}\log[f^{n_j},a]_{\can}(\cdot)=U_{\nu}+\lim_{j\to\infty}\frac{1}{d^{n_j}}\int_{\sP^1}\Phi(f^{n_j},a)_f\rd\mu_f=0,
\end{gather*}
 i.e., (\ref{eq:hyperbolic}), on $\sP^1\setminus\bP^1$.
 Conversely, if (\ref{eq:hyperbolic}) holds on $\sP^1\setminus\bP^1$, then
 by \eqref{eq:limsup}, $\{U_{\nu}<0\}\setminus\bP^1=\emptyset$, so 
 $\{U_{\nu}<0\}=\emptyset$.
 Hence by Lemma \ref{th:canonical}, \eqref{eq:weaklim} holds. 
 If one $($so ultimately all$)$ of 
 \eqref{eq:subgeneral}, \eqref{eq:support} and \eqref{eq:hyperbolic} holds, 
 then by Lemma \ref{th:movingvanish} and
 the comparison \eqref{eq:movingcomparison},
 the final \eqref{eq:Valiron} holds; indeed,
 \eqref{eq:Valiron} holds for every $a\in\bP^1$ without assuming 
 \eqref{eq:subgeneral}, \eqref{eq:support} or \eqref{eq:hyperbolic}.

This completes the proof of Theorem \ref{th:julia}.

\section{Proof of Theorems \ref{th:FR}, \ref{th:general} and \ref{th:moving}}
\label{sec:general} 
 
We give some addenda to our argument in Section \ref{sec:equidist}.
Let $K$ be an algebraically closed field of arbitrary characteristic,
and complete with respect
to a non-trivial absolute value.
Let $f\in K(z)$ be a rational function on $\bP^1$ of degree $d>1$, 
and $a\in K(z)$ a rational function on $\bP^1$.
If $a$ is constant, we identify $a$ with its value in $\bP^1$.
Let $\nu=\lim_{j\to\infty}\nu_{n_j}^a$ be the weak limit of a subsequence $(\nu_{n_j}^a)$
of $(\nu_n^a)$ on $\sP^1=\sP^1(K)$.
Taking a subsequence of $(n_j)$ if necessary, we can assume that
the limit \eqref{eq:limitmean} exists in $[-\infty,0]$.

We first give a purely local proof of Theorem \ref{th:FR}
based on \eqref{eq:diophantine} and Lemma \ref{th:canonical}.

\begin{proof}[Proof of Theorem $\ref{th:FR}$]
 Under the assumption in Theorem \ref{th:FR}, we set $K=\bC_v$.
 The set of all points in $\bP^1(\overline{k})$ which are wandering
 under $f$ and, if in addition $a$ is
 non-constant, do not belong to $a^{-1}(E(f))$,
 is dense in $\sP^1$. Since $U_{\nu}$ is upper semicontinuous,
 the inequality \eqref{eq:limsup}, combined with the dynamical Diophantine approximation result
 (\ref{eq:diophantine}),
 implies that $U_{\nu}\ge 0$ on $\sP^1$. 
 Hence by Lemma \ref{th:canonical}, \eqref{eq:weaklim} holds.
\end{proof}

Next, we show Theorem $\ref{th:general}$. 

\begin{proof}[Proof of Theorem $\ref{th:general}$]
We will show that $(\supp\nu)\cap\{U_{\nu}< 0\}=\emptyset$. 
This means that, by Lemma \ref{th:canonical}, \eqref{eq:weaklim} will hold.

Suppose first that $a\in\sJ(f)\cap\bP^1$. 
Then by $f^{-1}(\sJ(f))=\sJ(f)$,
we have $\supp\nu\subset\sJ(f)$. Hence by Lemma \ref{th:const},
$(\supp\nu)\cap\{U_{\nu}< 0\}=\emptyset$.

Suppose that $a\in (\sF(f)\cap\bP^1)\setminus E(f)$.
By the upper semicontinuity of $U_{\nu}$, $\{U_{\nu}<0\}$ is open. 
From \eqref{eq:limsup},
\begin{gather}
\limsup_{j\to\infty}\frac{1}{d^{n_j}}\log[f^{n_j}(\cdot),a]_{\can}
\le U_{\nu}(\cdot)<0\label{eq:quantitative}
\end{gather}
on $\{U_{\nu}<0\}$. This implies
that $\lim_{j\to\infty}f^{n_j}=a$ on $\{U_{\nu}<0\}\cap\bP^1$,
so $\{U_{\nu}<0\}\cap\bP^1\subset\sF(f)$, and that 
the Berkovich Fatou component $W$ of $f$ containing $a$ is cyclic under $f$,
i.e., $f^p(W)=W$ for some $p\in\bN$.
Then from the classification of cyclic (Berkovich) Fatou components
(see Theorem \ref{th:classification} for non-archimedean $K$), it follows 
either that $a$ is the unique attracting fixed point of $f^p$ in $W$ (attracting case)
or $\deg(f^p:W\to W)=1$ (singular case). 
In the attracting case, 
by \eqref{eq:quantitative}, $a$ is the superattracting fixed point of $f^p$ in $W$
satisfying $\deg_a f^p=d^p$. This contradicts the assumption $a\in\bP^1\setminus E(f)$.

Hence the singular case occurs. 
Let $U$ be a component of $\{U_{\nu}<0\}$ and
put $N:=\min\{n\in\bN\cup\{0\}:f^n(U)\subset W\}$.
Then for every $n>N$, there is at most one root of
$f^{n-N}(\cdot)=a$ in $W$,
which is simple if exists. Hence
\begin{gather*}
 0\le\nu(U)\le\limsup_{j\to\infty}\frac{1\cdot d^N}{d^{n_j}}=0.
\end{gather*}
This implies that $(\supp\nu)\cap\{U_{\nu}<0\}=\emptyset$.
\end{proof}

\begin{remark}
 For a purely potential theoretical proof of Theorem \ref{th:general}
 for non-archimedean $K$, see \cite[\S 5]{Jonsson12}.
\end{remark}

An application of Theorem \ref{th:general} is the following.

\begin{lemma}\label{th:smaller}
The Berkovich Julia set $\sJ(f)$ of $f$ coincides with
\begin{gather}
 \left\{\cS\in\sP^1:
 \bigcap_{(n_j)\subset\bN:\text{infinite}}\left(\bigcap_{U:\text{open in }\sP^1,\cS\in U}\left(\bigcup_{j\in\bN}f^{n_j}(U)\right)\right)=\sP^1\setminus E(f)\right\},\label{eq:smaller}
\end{gather}
which is a priori contained in $\sJ(f)$.
\end{lemma}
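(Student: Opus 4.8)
Write $\sJ'(f)$ for the set in \eqref{eq:smaller}. The inclusion $\sJ'(f)\subset\sJ(f)$ is immediate: the intersection over all infinite sequences $(n_j)$ is contained in the single choice $n_j=j$ (or, more robustly, in the intersection defining $\sJ(f)$, where one uses all $n\in\bN$ at once), so any point satisfying the stronger normality-failure condition in \eqref{eq:smaller} a fortiori satisfies the one defining $\sJ(f)$. The content is the reverse inclusion $\sJ(f)\subset\sJ'(f)$: for $\cS\in\sJ(f)$, every open neighborhood $U$ of $\cS$ and \emph{every} infinite subsequence $(n_j)$ must already satisfy $\bigcup_{j\in\bN}f^{n_j}(U)\supset\sP^1\setminus E(f)$.

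The strategy is to feed the equidistribution theorem (Theorem \ref{th:general}) into the defining property of the Julia set. First I would fix $\cS\in\sJ(f)$, an open $U\ni\cS$, and an infinite $(n_j)\subset\bN$; passing to a subsequence, assume $\nu:=\lim_{j\to\infty}\nu_{n_j}^a$ exists for a chosen constant target $a\in\bP^1\setminus E(f)$. By Theorem \ref{th:general} (applied to the full sequence, hence to this subsequence, since $\lim_{n\to\infty}\nu_n^a=\mu_f$), we get $\nu=\mu_f$, so $\supp\nu=\supp\mu_f=\sJ(f)$ by Theorem \ref{th:inclusion}. Consequently $\nu(U)>0$, because $U$ is an open set meeting $\sJ(f)=\supp\nu$. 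Since $\nu_{n_j}^a$ is (up to the normalization $d^{n_j}+\deg a$) the counting measure on the solutions of $f^{n_j}(\cdot)=a$, the mass $\nu_{n_j}^a(U)>0$ for all large $j$ forces $U$ to contain a solution $w_j$ of $f^{n_j}(w_j)=a$, i.e.\ $a\in f^{n_j}(U)$ for infinitely many $j$. Running this over a dense set of non-exceptional constants $a$ — or, more cleanly, over \emph{all} $a\in\bP^1\setminus E(f)$ directly, since the above argument used no restriction on $a$ beyond $a\notin E(f)$ — yields $\bigcup_{j\in\bN}f^{n_j}(U)\supset\bP^1\setminus E(f)$, and taking closures (the $f^{n_j}(U)$ are open, and $\overline{\bP^1\setminus E(f)}\supset\sP^1\setminus E(f)$ by density of $\bP^1$ in $\sP^1$ together with openness of each $f^{n_j}(U)$) gives $\bigcup_{j}f^{n_j}(U)\supset\sP^1\setminus E(f)$. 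Intersecting over $U$ and over $(n_j)$ shows $\cS\in\sJ'(f)$; the reverse containment $\sP^1\setminus E(f)\supset\bigcap\cdots$ is automatic since each $f^{n_j}(U)\subset\sP^1\setminus E(f)$ once $U$ is small (as $E(f)$ is finite and backward-invariant, a neighborhood of $\cS\in\sJ(f)$ can be taken disjoint from the finite forward orbit structure forcing the image to avoid $E(f)$; alternatively invoke that $\sP^1\setminus E(f)$ is completely invariant).

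The main obstacle I anticipate is the topological bookkeeping at the boundary: one must be careful that "$a\in f^{n_j}(U)$ for infinitely many $j$" for every non-exceptional \emph{classical} point $a$ really does upgrade to the stated equality involving the Berkovich set $\sP^1\setminus E(f)$, i.e.\ handling the non-classical points $\cS'\in\sP^1\setminus\bP^1$ in the image. The clean way is to note that $f^{n_j}(U)$ is an open subset of $\sP^1$ (Fact \ref{th:rigid}) containing a dense-in-$\sP^1$ subset of $\bP^1\setminus E(f)$ once $j$ is large along a suitable sub-subsequence, and then argue via a Baire/density argument that the union over $j$ exhausts $\sP^1\setminus E(f)$; in the archimedean case $\sP^1=\bP^1$ and this is vacuous. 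A secondary point is that passing to subsequences is harmless precisely because the conclusion we want is a statement about the \emph{union} over all $j$, which only grows when we discard terms — so proving it for a subsequence proves it for the original sequence. Everything else is a direct transcription of Theorem \ref{th:general} and Theorem \ref{th:inclusion}.
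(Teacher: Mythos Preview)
Your overall strategy---use Theorem~\ref{th:general} together with $\sJ(f)=\supp\mu_f$ (Theorem~\ref{th:inclusion}) to show that every open $U$ meeting $\sJ(f)$ must, along any infinite sequence $(n_j)$, hit every non-exceptional target---is exactly the paper's route, and your argument for classical targets $a\in\bP^1\setminus E(f)$ is correct. The gap is in the step you flag yourself: upgrading from $\bigcup_j f^{n_j}(U)\supset\bP^1\setminus E(f)$ to $\bigcup_j f^{n_j}(U)\supset\sP^1\setminus E(f)$. The density/Baire sketch does not work: in the non-archimedean case an open subset of $\sP^1$ can contain all of $\bP^1$ and still miss a non-classical point (e.g.\ $\sP^1\setminus\{\cS_{\can}\}$ is open, contains $\bP^1$, and omits $\cS_{\can}$), so ``open and contains a dense set'' gives nothing. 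No Baire-category maneuver rescues this, since the potentially omitted set could be a single non-isolated point.

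The paper closes this gap with Lemma~\ref{th:recover}: for every $\cS'\in\sP^1\setminus\bP^1$ one has $(f^n)^*\delta_{\cS'}/d^n\to\mu_f$ weakly, which is precisely the analogue of Theorem~\ref{th:general} for non-classical targets (and is in fact easier, there being no exceptional set to avoid). Your weak-convergence argument then applies verbatim with $\nu_{n_j}^a$ replaced by $(f^{n_j})^*\delta_{\cS'}/d^{n_j}$: since $\mu_f(U)>0$, the pullback measure eventually charges $U$, so $\cS'\in f^{n_j}(U)$ for large $j$. Invoking Lemma~\ref{th:recover} alongside Theorem~\ref{th:general} makes your proof complete and identical in substance to the paper's.
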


\begin{proof}
 By Theorem \ref{th:inclusion}, $\sJ(f)\subset\supp\mu_f$.
 By Theorems \ref{th:general} and \ref{th:recover}, $\supp\mu_f$ is contained in \eqref{eq:smaller}.
 Clearly, \eqref{eq:smaller} is contained in $\sJ(f)$.
\end{proof}

Suppose now that $a$ is non-constant.

\begin{proof}[Proof of Theorem $\ref{th:moving}$ for archimedean $K\cong\bC$]
 We will show that $U_{\nu}\ge 0$ on $\supp\nu$. 
 Then by Lemma \ref{th:canonical}, \eqref{eq:weaklim} will hold.

 By the upper semicontinuity of $U_{\nu}$, $\{U_{\nu}<0\}$ is open. 
 Let $U$ be a component of $\{U_{\nu}<0\}$. By Lemma \ref{th:negativeFatou}, 
 $U\subset\sF(f)$. From \eqref{eq:limsup}, we have $\lim_{j\to\infty}f^{n_j}=a$
 on $U$. Since $a$ is non-constant, this implies that
 there are an $N\in\bN$ and a cyclic Fatou component $Y$ of $f$ such that $Y$ is
 a Siegel disk or an Herman ring of $f$ and that
 for every $j\ge N$, $f^{n_j}(U)\subset Y$. Then $a(U)\subset Y$.
 For some $k_0\in\bN$, we have $f^{k_0}(Y)=Y$, and for every $j\ge N$, we have $k_0|(n_j-n_N)$.

Let $h:Y\to\bC$ be a holomorphic injection (a linearization map) 
such that for some $\alpha\in\bR\setminus\bQ$, $h\circ f^{k_0}=\lambda\cdot h$
on $Y$. Taking a subsequence of $(n_j)$ if necessary,
$\lambda_0:=\lim_{j\to\infty}\lambda^{(n_j-n_N)/k_0}\in\bC$ 
exists and
\begin{gather*}
  h\circ a=\lim_{j\to\infty}h\circ f^{n_j}=\lambda_0\cdot (h\circ f^{n_N})
\end{gather*} 
 on $U$. Moreover, for every $j\in\bN$ large enough, $\lambda^{(n_j-n_N)/k_0}\neq\lambda_0$ and 
 \begin{gather*}
h\circ f^{n_j}-h\circ a
 =(\lambda^{(n_j-n_N)/k_0}-\lambda_0)\cdot (h\circ f^{n_N})\label{eq:Tortrat}
 \end{gather*}
 on $U$. Since $h$ has at most one zero in $Y$, which is simple if exists, we have
 \begin{gather*}
  0\le\nu(U)\le\limsup_{j\to\infty}\frac{1\cdot d^{n_N}}{d^{n_j}+\deg a}=0.
 \end{gather*}
 This implies that $\{U_{\nu}<0\}\cap(\supp\nu)=\emptyset$.
\end{proof}

Suppose now that $K$ is non-archimedean.
In the following definition,
$\cE_f$ is a {\itshape Berkovich version}
of Rivera-Letelier's quasiperiodicity domain of $f$.

\begin{definition}
 Let $\cE_f$ be the set of points in $\sP^1$ 
 having a neighborhood $U$ such that for some $(n_j)\subset\bN$ tending to $\infty$,
\begin{gather}
 \lim_{j\to\infty}\sup_{U\cap\bP^1}[f^{n_j},\Id_{\bP^1}]=0.\label{eq:convid} 
\end{gather}
\end{definition}

\begin{lemma}\label{th:quasiperiodicity}
$\cE_f$ is open, $f(\cE_f)\subset\cE_f$, and
$\cE_f$ is covered by singular domains of $f$. In particular,
$\cE_f\cap\bP^1\neq\bP^1$.
\end{lemma}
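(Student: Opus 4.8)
The plan is to extract each of the three conclusions directly from the defining condition \eqref{eq:convid}, together with the classification of cyclic Berkovich Fatou components (Theorem \ref{th:classification}) and Theorem \ref{th:moving}'s machinery for singular domains. First I would check that $\cE_f$ is open: this is immediate from the definition, since if $\cS\in\cE_f$ is witnessed by a neighborhood $U$ and a sequence $(n_j)$, then every point of $U$ is also in $\cE_f$ with the same data $(U,(n_j))$; hence $\cE_f=\bigcup\{U: U$ witnesses some point$\}$ is a union of open sets. Next I would verify $f(\cE_f)\subset\cE_f$. Given $\cS\in\cE_f$ with witness $(U,(n_j))$, the set $V:=f(U)$ is a Berkovich open connected affinoid containing $f(\cS)$ (by Fact \ref{th:rigid}), and on $V\cap\bP^1$ one has $\sup[f^{n_j},\Id_{\bP^1}]\to 0$: indeed, for $w=f(z)$ with $z\in U\cap\bP^1$, the chordal distance $[f^{n_j}(w),w]=[f^{n_j+1}(z),f(z)]$, and since $f$ is chordally Lipschitz (uniformly on $\bP^1$, with Lipschitz constant controlled by $\deg f$ and the coefficients), this is bounded by $C\cdot[f^{n_j}(z),z]\to 0$. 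So $(V,(n_j+1))$ — or just $(V,(n_j))$ after relabeling — witnesses $f(\cS)\in\cE_f$.

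The substantive point is that $\cE_f$ is covered by singular domains. I would argue that any point $\cS\in\cE_f$ lies in the Berkovich Fatou set $\sF(f)$: the convergence $\sup_{U\cap\bP^1}[f^{n_j},\Id_{\bP^1}]\to 0$ forces $\{f^n\}$ to be non-expanding in a strong sense near $\cS$, and in particular $\{f^n(\cS)\}$ cannot accumulate on all of $\sP^1\setminus E(f)$, so $\cS\notin\sJ(f)$. (One can make this precise by noting that \eqref{eq:convid} implies $f^{n_j}|_{U}$ converges to $\Id$ uniformly, so $f^{n_j}(U)$ eventually stays near $U$ rather than sweeping out $\sP^1$; this is incompatible with $\cS$ being in the Julia set by the definition of $\sJ(f)$ recalled in the introduction.) Let $W$ be the Berkovich Fatou component containing $\cS$. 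Since $f^{n_j}|_{U}\to\Id$, the component $W$ must be periodic — for $j$ large the component $f^{n_j}(W)$ meets $W$, and Fatou components are either disjoint or equal, so $f^{n_j}(W)=W$ for two distinct large indices, whence $f^p(W)=W$ for $p=n_{j+1}-n_j>0$. So $W$ is a cyclic Berkovich Fatou component. By Theorem \ref{th:classification} it is either an immediate (super)attractive basin or a singular domain. The attracting case is excluded: if $W$ contained an attracting periodic point $b\in W\cap\bP^1$, then $f^{n_j}\to b$ locally uniformly on $W$ along a subsequence of the cycle, contradicting $f^{n_j}|_U\to\Id_{\bP^1}$ (the limit would have to be both the constant $b$ and the identity on $U\cap\bP^1$, impossible since $U\cap\bP^1$ is infinite). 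Hence $W$ is a singular domain, and since $\cS\in W\subset\cE_f$ was arbitrary, $\cE_f$ is covered by singular domains.

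Finally, $\cE_f\cap\bP^1\neq\bP^1$: a singular domain $W$ satisfies $\deg(f^p:W\to W)=1$, so in particular $W\neq\sP^1$, and the backward orbit $\bigcup_n f^{-n}(\partial W)$ — or more simply the fact that $\supp\mu_f=\sJ(f)$ is nonempty and disjoint from every singular domain — shows $\cE_f$ omits points of $\bP^1$; concretely, $\sJ(f)\cap\bP^1$ is nonempty (as $\mu_f$ has full support on $\sJ(f)$ and $\bP^1$ is dense in $\sP^1$, or by the existence of repelling cycles / the density of $\bP^1$) and disjoint from $\cE_f\subset\sF(f)$. I expect the main obstacle to be pinning down rigorously why \eqref{eq:convid} forces $\cS\in\sF(f)$ and forces periodicity of $W$ with the correct period relation $p=n_{j+1}-n_j$; this requires care with the fact that $U\cap\bP^1$ is a genuine (infinite) neighborhood basis point in the Berkovich topology and that uniform smallness of $[f^{n_j},\Id_{\bP^1}]$ on $U\cap\bP^1$ propagates to control of $f^{n_j}$ on the Berkovich neighborhood $U$ itself, using continuity of the extended action and density of $\bP^1$.
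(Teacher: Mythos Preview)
Your argument has two genuine gaps.

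First, the step ``$\cE_f\subset\sF(f)$'' does not follow from the definition of $\sJ(f)$ recalled in the introduction. That definition says $\cS\in\sJ(f)$ iff $\bigcup_{n\in\bN} f^n(U)=\sP^1\setminus E(f)$ for every neighborhood $U$ of $\cS$; your observation that $f^{n_j}(U)$ stays near $U$ only constrains the images along the subsequence $(n_j)$ and says nothing about $f^n(U)$ for the remaining $n$. You correctly flag this as the main obstacle but do not resolve it. The paper closes this gap by invoking Lemma~\ref{th:smaller}, which strengthens the Julia characterization to: $\cS\in\sJ(f)$ iff for \emph{every} infinite $(n_j)\subset\bN$ the union $\bigcup_j f^{n_j}(U)$ still covers $\sP^1\setminus E(f)$. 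That lemma is itself a consequence of the equidistribution Theorem~\ref{th:general} for constant targets, so the paper's proof here is less elementary than it first appears.

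Second, your argument for $\cE_f\cap\bP^1\neq\bP^1$ is wrong: you assert that $\sJ(f)\cap\bP^1\neq\emptyset$, but this fails whenever $f$ has good reduction, where $\sJ(f)=\{\cS_{\can}\}\subset\sP^1\setminus\bP^1$ (the paper computes such an example in Section~\ref{sec:example}). Density of $\bP^1$ in $\sP^1$ does not force $\bP^1$ to meet an arbitrary closed set, and the existence of classical repelling periodic points is a delicate issue in the non-archimedean setting. The paper instead argues via local degree: since each singular domain $W$ satisfies $\deg(f^p:W\to W)=1$, one has $\deg_z f=1$ for every $z\in\cE_f\cap\bP^1$; but $\deg f>1$ forces some $z\in\bP^1$ with $\deg_z f>1$, whence $\cE_f\cap\bP^1\neq\bP^1$.

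The remaining steps---openness, $f(\cE_f)\subset\cE_f$ via the chordal Lipschitz bound, periodicity of the ambient Fatou component, and exclusion of the attracting case---are essentially correct and match the paper's approach.
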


\begin{proof}
From the definition, $\cE_f$ is open in $\sP^1$. 
For every open subset $U$ in $\sP^1$,
$[f^{n_j},\Id]\circ f=[f^{n_j+1},f]\le L[f^{n_j},\Id]$ on $U\cap\bP^1$, 
where $L>0$ is a Lipschitz constant of $f|\bP^1$ with respect to the chordal distance.
Hence if \eqref{eq:convid} holds on the $U$, then
$\lim_{j\to\infty}\sup_{f(U)\cap\bP^1}[f^{n_j},\Id_{\bP^1}]=0$, so
$f(\cE_f)\subset\cE_f$.

By Lemma \ref{th:smaller} and \eqref{eq:convid}, $\cE_f\cap\bP^1\subset\sF(f)$.
Moreover, by \eqref{eq:convid},
$\cE_f$ is indeed covered by some cyclic Berkovich Fatou components $W$ of $f$, 
and by Theorem \ref{th:classification} and \eqref{eq:convid},
each $W$ is a singular domain.

Since $\cE_f$ is covered by singular domains of $f$, $f$ has no critical points
in $\cE_f\cap\bP^1$, so by $\deg f>1$, we have $\cE_f\cap\bP^1\neq\bP^1$. 
\end{proof}

For non-archimedean $K$ of characteristic $0$,
a non-archimedean counterpart of the uniformization 
of a Siegel disk or an Herman ring of $f$
is given by Rivera-Letelier's iterative logarithm of $f$ on $\cE_f$.

\begin{theorem}[{\cite[\S 3.2, \S4.2]{Juan03}. See also \cite[Th\'eor\`eme 2.15]{FR09}}]\label{th:uniformization}
 Suppose that $K$ has characteristic $0$ and residual characteristic $p$. 
 Let $f\in K(z)$ be a rational function on $\bP^1$
 of degree $>1$ and suppose that $\cE_f\neq\emptyset$, which implies 
 $p>0$ by \cite[Lemme 2.14]{FR09}.
 Then for every component $Y$ of $\cE_f$ not containing $\infty$,
 there are a $k_0\in\bN$, a continuous action 
 $T:\bZ_p\times (Y\cap K)\ni (\omega,y)\mapsto T^\omega(y)\in Y\cap K$
 and a non-constant $K$-valued holomorphic function $T_*$ on $Y\cap K$
 such that for every $m\in\bZ$, $(f^{k_0})^m=T^m$ on $Y\cap K$, 
 that for each $\omega\in\bZ_p$, $T^{\omega}$ is a biholomorphism on $Y\cap K$ and that
 for every $\omega_0\in\bZ_p$,
 \begin{gather}
  \lim_{\bZ_p\ni\omega\to\omega_0}\frac{T^{\omega}-T^{\omega_0}}{\omega-\omega_0}
=T_*\circ T^{\omega_0}\label{eq:logarithm}
 \end{gather} 
 locally uniformly in $Y\cap K$. 
\end{theorem}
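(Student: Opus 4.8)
\emph{Reduction to a cyclic biholomorphism.} Fix a component $Y$ of $\cE_f$ with $\infty\notin Y$, so that $Y\cap K$ is a connected affinoid in $K=\bP^1\setminus\{\infty\}$. By \eqref{eq:convid} (applied on a neighbourhood of $Y$) we have $f^{n_j}(Y)\subset Y$ for all large $j$; fix such an $n_j=:k_0\in\bN$ and set $g:=f^{k_0}$. Using that $Y$ lies in a singular domain of $f$ (Lemma~\ref{th:quasiperiodicity}) together with the singular case of Theorem~\ref{th:classification} and the properness in Fact~\ref{th:rigid}, one checks that $g$ restricts to a degree-one proper self-map of $Y\cap K$, hence to a biholomorphism $g\colon Y\cap K\to Y\cap K$; thus all powers $g^{m}$, $m\in\bZ$, are defined on $Y\cap K$ (and the same holds for every multiple of $k_0$ in place of $k_0$).

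\emph{The orbit closure is a compact abelian group.} Next I would show that $(g^m)_{m\in\bZ}$ is relatively compact in the space $\mathcal G$ of analytic self-maps of $Y\cap K$, topologized by uniform convergence on strict closed connected affinoids. Equicontinuity comes from the degree-one property, via a non-archimedean normality argument (the $g^m\colon Y\cap K\to\bP^1$ all avoid a fixed nonempty open set, e.g.\ a neighbourhood of $\infty$), and the closure of $Y\cap K$ in $\sP^1$ is compact; uniform limits of the $g^m$ are holomorphic (Fact~\ref{th:holo}), non-constant hence injective by a Hurwitz-type argument, and, together with the available inverses $g^{-m}$, are biholomorphisms of $Y\cap K$. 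Hence $G:=\overline{\{g^m:m\in\bZ\}}$ is a compact abelian topological group with dense cyclic subgroup $\langle g\rangle$; it contains $\Id$ by the strong recurrence \eqref{eq:convid}, and it is \emph{infinite}, since $g^n=\Id$ on the open set $Y\cap K$ would force $f^{k_0}\equiv\Id$ on $\bP^1$ by the identity principle, contradicting $\deg f^{k_0}=d^{k_0}>1$.

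\emph{Identification $G\cong\bZ_p$ and the action $T$.} Being infinite, compact, abelian and procyclic, $G$ is a quotient of $\widehat{\bZ}=\prod_{\ell}\bZ_\ell$; the point is to kill every factor except $\bZ_p$, where $p>0$ is the residual characteristic (positive by \cite[Lemme~2.14]{FR09}, as $\cE_f\neq\emptyset$). Working in a suitable (e.g.\ linearizing) coordinate on $Y\cap K$, one replaces $g$ by a suitable power — correspondingly enlarging $k_0$ — so as to absorb the finite-order torsion of the multiplier, reducing to the case where $G$ consists of automorphisms tangent to the identity; such automorphisms of a non-archimedean disk or annulus form a pro-$p$ group in residual characteristic $p$ (a Nottingham-group-type fact), so the infinite procyclic $G$ must be $\cong\bZ_p$, with $1\mapsto g$. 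This isomorphism is the desired action $T\colon\bZ_p\times(Y\cap K)\to Y\cap K$: it satisfies $T^m=(f^{k_0})^m$ on $Y\cap K$ for all $m\in\bZ$, each $T^{\omega}$ is a biholomorphism, and joint continuity follows from the compactness of $G$ and the continuity of evaluation.

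\emph{The iterative logarithm $T_*$.} In the same coordinate, $T^{t}$ admits an expansion $T^{t}=\Id+t\,R+o(t)$ as $\bZ_p\ni t\to0$, uniform on strict affinoids (for instance $T^{t}$ is multiplication by $\lambda^{t}=\exp(t\log\lambda)$, analytic in $t$, or, when $\lambda=1$, $w\mapsto w+t\,w^{k}+\cdots$). Hence $T_*:=\lim_{t\to0,\,t\neq0}(T^{t}-\Id)/t$ exists as a uniform limit on strict affinoids of holomorphic functions, so is holomorphic on $Y\cap K$ (Fact~\ref{th:holo}); it is non-constant, since otherwise the regularity of $t\mapsto T^{t}$ would force $T^{t}=\Id$ for $t$ near $0$ in $\bZ_p$, whence $(f^{k_0})^{p^N}\equiv\Id$ on $Y\cap K$ — impossible by the identity principle. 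Finally the cocycle identity $T^{\omega}=T^{\omega-\omega_0}\circ T^{\omega_0}$ gives
\[
 \frac{T^{\omega}-T^{\omega_0}}{\omega-\omega_0}
 =\Bigl(\frac{T^{\omega-\omega_0}-\Id}{\omega-\omega_0}\Bigr)\circ T^{\omega_0}
 \ \xrightarrow{\ \omega\to\omega_0\ }\ T_*\circ T^{\omega_0}
\]
locally uniformly in $Y\cap K$, which is \eqref{eq:logarithm}. The main obstacle is the third step: identifying the compact procyclic group $G$ with $\bZ_p$ is exactly where the residual characteristic enters, and it rests on the structure theory of analytic automorphisms of non-archimedean disks and annuli close to the identity (their congruence filtration and its $p$-torsion graded pieces), with the linearization of $g$ on $Y\cap K$ the most convenient route; a secondary technical point is the uniform-on-affinoids differentiability of $T^{t}$ at $t=0$, needed to see that $T_*$ is genuinely holomorphic rather than merely continuous.
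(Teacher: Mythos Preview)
The paper does not prove Theorem~\ref{th:uniformization}: it is quoted from Rivera-Letelier \cite[\S3.2, \S4.2]{Juan03} (see also \cite[Th\'eor\`eme~2.15]{FR09}) and used as a black box in the proof of Theorem~\ref{th:moving}. So there is no ``paper's own proof'' to compare against; your proposal is really an attempt to reconstruct Rivera-Letelier's argument.

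Your outline has the right overall shape --- pass to an iterate $g=f^{k_0}$ that is a biholomorphism of $Y\cap K$, take the closure of the cyclic group $\langle g\rangle$ in a suitable function space, identify this procyclic group with $\bZ_p$, and then differentiate the action in the $\bZ_p$-parameter to obtain $T_*$ --- and you correctly flag the third step as the crux. However, the mechanism you propose for that step is not quite right. You repeatedly invoke a ``linearizing coordinate'' on $Y\cap K$ and model cases like ``$T^t$ is multiplication by $\lambda^t$'' or ``$w\mapsto w+tw^k+\cdots$'', but a component $Y$ of $\cE_f$ need not contain any periodic point of $f$ at all, so there is in general no fixed point about which to linearize and no multiplier $\lambda$ to speak of. Rivera-Letelier's construction does not go through linearization: it proceeds by showing directly that, after replacing $g$ by a suitable $p$-power iterate so that $g$ is sufficiently close to $\Id$ on each strict affinoid (in the sense that the coefficients of $g-\Id$ are $p$-adically small), the map $m\mapsto g^m$ extends to an analytic function of $m\in\bZ_p$ by interpolating the coefficients of the power series $g^m$ --- this is a Mahler/Strassmann-type interpolation argument, and it is here that residual characteristic $p>0$ and characteristic $0$ of $K$ are both used. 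The analyticity in $\omega$ then yields the existence and holomorphy of $T_*$ simultaneously, without needing a separate ``$T^t=\Id+tR+o(t)$ in a model coordinate'' step.

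A smaller point: in your first paragraph you assert $f^{n_j}(Y)\subset Y$ ``by \eqref{eq:convid}'', but \eqref{eq:convid} is a local statement near each point of $\cE_f$, with the sequence $(n_j)$ a priori depending on the point; one needs a short compactness/connectedness argument to pass to a single sequence that works on all of $Y$ before fixing $k_0$.
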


We also need the following.

\begin{lemma}\label{th:hartogsunif}
 For every compact subset $C$ in $\{U_{\nu}<0\}$, 
\begin{gather*}
  \lim_{j\to\infty}\sup_C[f^{n_j},a]_{\can}(\cdot)=0.
\end{gather*}
\end{lemma}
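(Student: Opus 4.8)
The plan is to upgrade the pointwise/local bound $\limsup_j \frac{1}{d^{n_j}}\log[f^{n_j},a]_{\can}(\cdot) \le U_\nu(\cdot) < 0$ from \eqref{eq:limsup} on the open set $\{U_\nu<0\}$ into a genuine uniform convergence $\sup_C[f^{n_j},a]_{\can}(\cdot)\to 0$ on compact subsets $C$. The obvious tool is a Hartogs-type / Dini-type argument for subharmonic functions, which is why the lemma is named \texttt{hartogsunif}. First I would fix a compact $C\subset\{U_\nu<0\}$, and choose $\varepsilon>0$ with $U_\nu<-2\varepsilon$ on a slightly larger compact neighborhood $C'$ of $C$ (possible since $U_\nu$ is upper semicontinuous and $\{U_\nu<0\}$ is open, so we can shrink $C'$ to stay inside $\{U_\nu<-2\varepsilon\}$ by compactness). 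The functions $u_j:=\frac{1}{d^{n_j}}\Phi(f^{n_j},a)_f(\cdot)$ are $\delta$-subharmonic, and by \eqref{eq:original} they differ from $U_{(1+(\deg a)/d^{n_j})\nu_{n_j}^a}$ by the bounded terms $-\frac{1}{d^{n_j}}U_{a^*\mu_f}+\frac{1}{d^{n_j}}\int\Phi(f^{n_j},a)_f\,\rd\mu_f$, both of which $\to 0$ uniformly on $\sP^1$ by Lemma~\ref{th:Riesz} and the assumption that the limit \eqref{eq:limitmean} exists (and by Lemma~\ref{th:const}/\ref{th:movingvanish} in the cases where it is forced to vanish).

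The key step is then the following Hartogs-lemma mechanism: by \eqref{eq:cutoff}, $\limsup_j U_{\nu_{n_j}^a}\le U_\nu$ on $\sP^1$, and more precisely, combining the cut-off argument with the fact that the cut-off kernels $\max\{-N,\Phi_f(\cdot,\cS')\}$ are continuous, one gets that for each $N$, the functions $\cS\mapsto\int\max\{-N,\Phi_f(\cS,\cS')\}\,\rd\nu_{n_j}^a(\cS')$ converge uniformly on $\sP^1$ (by continuity of the kernel on the compact $\sP^1\times\sP^1$ and weak convergence $\nu_{n_j}^a\to\nu$) to $\int\max\{-N,\Phi_f(\cdot,\cS')\}\,\rd\nu(\cS')$, which is $\le U_\nu + (\text{something controllable})$; choosing $N$ large enough that $\int\max\{-N,\Phi_f(\cS,\cS')\}\,\rd\nu(\cS')<-\varepsilon$ on $C'$ (again by monotone convergence and compactness of $C'$), we obtain $U_{\nu_{n_j}^a}<-\varepsilon$ on $C'$ for all large $j$, hence $u_j<-\varepsilon/2$ on $C'$ for all large $j$ after absorbing the uniformly-small correction terms. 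Finally, invoking \eqref{eq:movingcomparison}, $\frac{1}{d^{n_j}}\log[f^{n_j},a]_{\can}(\cdot) = u_j + \frac{1}{d^{n_j}}(g_f\circ f^{n_j}+g_f\circ a)$ and the last parenthetical term is bounded by $\frac{2\sup|g_f|}{d^{n_j}}\to 0$ uniformly, so $\frac{1}{d^{n_j}}\log[f^{n_j},a]_{\can}(\cdot)<-\varepsilon/4$ on $C'\supset C$ for all large $j$; since $[f^{n_j},a]_{\can}\in[0,1]$, exponentiating gives $\sup_C[f^{n_j},a]_{\can}(\cdot)\le e^{-\varepsilon d^{n_j}/4}\to 0$.

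**The main obstacle** I anticipate is making the uniformity in the cut-off step rigorous: \eqref{eq:cutoff} as stated only gives $\limsup_j U_{\nu_{n_j}^a}\le U_\nu$ pointwise, whereas I need uniform-on-$C'$ control from below. The resolution is precisely that the truncated kernel $(\cS,\cS')\mapsto\max\{-N,\Phi_f(\cS,\cS')\}$ is \emph{continuous} on $\sP^1\times\sP^1$ (Fact~\ref{th:separately} plus boundedness of $g_f$), hence uniformly continuous, so weak convergence $\nu_{n_j}^a\to\nu$ yields convergence of $\cS\mapsto\int\max\{-N,\Phi_f(\cS,\cdot)\}\,\rd\nu_{n_j}^a$ \emph{uniformly} in $\cS\in\sP^1$; and then one chooses $N=N(\varepsilon,C')$ first, before sending $j\to\infty$, so no diagonal-limit difficulty arises. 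Once that uniform-convergence-of-truncated-potentials point is established, the rest is bookkeeping with the uniformly small error terms coming from \eqref{eq:original}, \eqref{eq:movingcomparison}, and the boundedness of $g_f$ and $U_{a^*\mu_f}$.
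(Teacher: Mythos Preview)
Your overall architecture matches the paper's: both use \eqref{eq:original} and \eqref{eq:movingcomparison} to reduce the problem to a uniform Hartogs-type bound $\limsup_{j}\sup_{C}U_{\nu_{n_j}^a}\le\sup_{C}U_{\nu}$, and then conclude from $\sup_C U_\nu<0$. The paper, however, simply \emph{cites} this Hartogs lemma as \eqref{eq:lemmaHartogs} (referring to \cite[Proposition~2.18]{FR09} and \cite[Proposition~8.57]{BR10}) rather than reproving it.

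Your attempt to reprove that step is where the gap lies. You assert that the truncated kernel $(\cS,\cS')\mapsto\max\{-N,\Phi_f(\cS,\cS')\}$ is continuous on $\sP^1\times\sP^1$, hence uniformly continuous, so that weak convergence $\nu_{n_j}^a\to\nu$ forces the truncated potentials to converge uniformly in $\cS$. But in the non-archimedean case---precisely the setting in which this lemma is invoked---$\Phi_f$ is \emph{not} jointly continuous: by Fact~\ref{th:separately}, $[\cS,\cS']_{\can}$ is only upper semicontinuous on $\sP^1\times\sP^1$, failing continuity along the diagonal of $(\sP^1\setminus\bP^1)\times(\sP^1\setminus\bP^1)$, where it takes finite positive values. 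Truncation from below by $-N$ does nothing to repair a discontinuity at a point where $\Phi_f>-N$, so $\max\{-N,\Phi_f\}$ remains merely upper semicontinuous there, and your uniform-continuity step collapses. Separate continuity in $\cS'$ (which is all Fact~\ref{th:separately} provides) yields only the pointwise bound \eqref{eq:cutoff}, not the uniform one you need. The cited Hartogs lemmas in \cite{FR09,BR10} are proved by other means, using the structure of subharmonic functions on $\sP^1$.

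One minor side remark: you do not need the constant term $\frac{1}{d^{n_j}}\int_{\sP^1}\Phi(f^{n_j},a)_f\,\rd\mu_f$ to tend to $0$. Invoking Lemma~\ref{th:movingvanish} here would be circular, since that lemma already assumes $\nu=\mu_f$. It suffices that the limit \eqref{eq:limitmean} lies in $[-\infty,0]$, which is the standing assumption; since you only need an \emph{upper} bound on $u_j$, a nonpositive limit can only help. The paper's proof uses exactly this observation.
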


\begin{proof}
 By a lemma of Hartogs (cf.\
 \cite[Proposition 2.18]{FR09}, \cite[Proposition 8.57]{BR10}) and \eqref{eq:cutoff}, 
 for every compact subset $C$ in $\sP^1$,
\begin{gather}
  \limsup_{j\to\infty}\sup_C U_{\nu_{n_j}^a}\le\sup_C U_{\nu}.\label{eq:lemmaHartogs}
\end{gather} 
By Lemma \ref{th:Riesz},
 \begin{multline*}
  \sup_C\frac{1}{d^{n_j}}\Phi(f^{n_j},a)_f(\cdot)\\
  =\sup_C U_{(1+(\deg a)/d^{n_j})\nu_{n_j}^a}+\frac{1}{d^{n_j}}\sup_C |U_{a^*\mu_f}|
  +\frac{1}{d^{n_j}}\int_{\sP^1}\Phi(f^{n_j},a)_f\rd\mu_f,
 \end{multline*}
and let us take $\limsup_{j\to\infty}$ of both sides.
Then by the comparison \eqref{eq:movingcomparison}, 
the estimate \eqref{eq:lemmaHartogs},
and the boundedness of $U_{a^*\mu_f}$, we have
 \begin{gather*}
  \limsup_{j\to\infty}\frac{1}{d^{n_j}}\log\sup_C [f^{n_j},a]_{\can}(\cdot)
  \le\sup_C U_{\nu}.
 \end{gather*} 
 If $C\subset\{U_{\nu}<0\}$, then 
 by the upper semicontinuity of $U_{\nu}$, $\sup_C U_{\nu}<0$. 
This completes the proof.
\end{proof}

Suppose now that $K$ is non-archimedean and of characteristic $0$.
By Lemma \ref{th:quasiperiodicity},
we can assume $\infty\not\in\cE_f$ without loss of generality.

\begin{proof}[Proof of Theorem $\ref{th:moving}$ for non-archimedean $K$ of characteristic $0$]
 We will show that $U_{\nu}\ge 0$ on $\supp\nu$. 
 Then by Lemma \ref{th:canonical}, \eqref{eq:weaklim} will hold.

 By the upper semicontinuity of $U_{\nu}$, $\{U_{\nu}<0\}$ is open. 
 Let $U$ be a component of $\{U_{\nu}<0\}$. 
 For every compact subset $C$ in $\{U_{\nu}<0\}$,
 $\sup_C U_{\nu}<0$.

\begin{lemma}
 $a(U)\subset\cE_f$.
\end{lemma}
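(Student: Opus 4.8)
The plan is to show that the open connected set $U$ (a component of $\{U_\nu<0\}$) eventually maps into a single component of the quasiperiodicity domain $\cE_f$ under the iterates $f^{n_j}$, and then transfer that to a statement about $a(U)$ using the convergence $f^{n_j}\to a$ on $U$. First I would invoke Lemma~\ref{th:negativeFatou} to get $U\subset\sF(f)$, and use Lemma~\ref{th:hartogsunif}: on any compact $C\subset U$ one has $\sup_C[f^{n_j},a]_{\can}\to 0$, so $f^{n_j}\to a$ locally uniformly on $U\cap\bP^1$ (with respect to the chordal distance). In particular, applying this with $a$ and also exploiting that $\deg a<\infty$ together with the comparison $[f^{n_j+m},f^m]\le L^m[f^{n_j},\Id]$-type Lipschitz estimates, I would deduce that for fixed large $i<j$ the difference $f^{n_j}$ and $f^{n_i}$ become chordally close on compacts of $U\cap\bP^1$, i.e.\ $\sup_C[f^{n_j},f^{n_i}]\to 0$ as $i,j\to\infty$. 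Composing with $f^{n_i}$ (again using the Lipschitz bound for $f$ on $\bP^1$) this says $\sup_{f^{n_i}(C)}[f^{n_j-n_i},\Id_{\bP^1}]\to 0$, which is exactly the defining condition \eqref{eq:convid} for points of $\cE_f$; hence $f^{n_i}(C)\subset\cE_f$ for $i$ large, and letting $C$ exhaust $U$ gives $f^{n_i}(U)\subset\cE_f$ for all large $i$.

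Next I would pass from $f^{n_i}(U)\subset\cE_f$ to $a(U)\subset\cE_f$. Since $\cE_f$ is open (Lemma~\ref{th:quasiperiodicity}) and $f^{n_j}\to a$ uniformly on compacts of $U$, for each compact $C\subset U$ the images $f^{n_j}(C)$ lie in a fixed compact neighborhood inside $\cE_f$ for $j$ large, and $a(C)=\lim_j f^{n_j}(C)$ is contained in the closure of that neighborhood; a slightly more careful argument — choosing the neighborhood to be a strict Berkovich affinoid contained in $\cE_f$ and using that $a(C)$ is compact and is a uniform limit of sets inside it — yields $a(C)\subset\cE_f$. Exhausting $U$ by compacts gives $a(U)\subset\cE_f$, which is the claim.

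The main obstacle I anticipate is the passage from the uniform convergence $f^{n_j}\to a$ to the \emph{Cauchy-type} estimate $\sup_C[f^{n_j},f^{n_i}]\to 0$ as $i,j\to\infty$, and then arranging it in the precise form \eqref{eq:convid} (i.e.\ with $f^{n_j-n_i}\to\Id$ on a neighborhood of a point, not merely on a compact set). One has to be careful that the ``neighborhood'' in the definition of $\cE_f$ is an honest open set in $\sP^1$; here one should first establish the estimate on compacts of $U\cap\bP^1$ and then use that $U$ itself, being open in $\sP^1$ and a component of $\{U_\nu<0\}$, provides such neighborhoods of each of its points, together with the fact that the $f^{n_i}$ are open maps so $f^{n_i}(U)$ is open. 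A secondary subtlety is controlling the composition uniformly: the Lipschitz constant $L$ of $f|\bP^1$ gives $[f^{n_j-n_i},\Id]\circ f^{n_i}\le$ something like $L^{n_i}[f^{n_j},f^{n_i}]$ only naively, so one instead argues in the other direction — using that $f^{n_i}$ maps $U$ into $\cE_f$ and that on $\cE_f$ (covered by singular domains, $\deg(f^p:W\to W)=1$) the iterates are uniformly Lipschitz, which keeps the estimates from degenerating. Modulo this, the rest is a routine limiting argument using openness of $\cE_f$ and the uniform convergence supplied by Lemma~\ref{th:hartogsunif}.
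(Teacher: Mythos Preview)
There is a genuine gap in the central step. You correctly obtain the Cauchy estimate $\sup_{C\cap\bP^1}[f^{n_j},f^{n_i}]\to 0$ as $i,j\to\infty$ (this is just the triangle inequality for the chordal metric, no Lipschitz bounds needed), and the identity $[f^{n_j-n_i},\Id]\circ f^{n_i}=[f^{n_j},f^{n_i}]$ is exact. But the conclusion ``hence $f^{n_i}(C)\subset\cE_f$ for $i$ large'' does not follow. The definition of $\cE_f$ asks for a \emph{fixed} open neighbourhood $V$ and a sequence $(m_k)\to\infty$ with $\sup_{V\cap\bP^1}[f^{m_k},\Id]\to 0$. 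If you fix $i$ and take $V=f^{n_i}(\sD)$ for an open $\sD\Subset U$, then the only available sequence is $m_j=n_j-n_i$, and
\[
\sup_{f^{n_i}(\sD)\cap\bP^1}[f^{m_j},\Id]
=\sup_{\sD\cap\bP^1}[f^{n_j},f^{n_i}]
\xrightarrow[j\to\infty]{}\sup_{\sD\cap\bP^1}[a,f^{n_i}],
\]
which is \emph{not} zero in general. The estimate only tends to $0$ when $i$ and $j$ go to infinity \emph{together}, so you are proving uniform convergence on a family of neighbourhoods $f^{n_j}(\sD)$ that moves with $j$; this is not the $\cE_f$ condition. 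Consequently the intermediate claim $f^{n_i}(U)\subset\cE_f$ is unproved (and may well be false, since $f^{n_i}(U)$ can be much larger than $a(U)$), and your second step---passing from $f^{n_i}(U)\subset\cE_f$ to $a(U)\subset\cE_f$ via openness of $\cE_f$---is then both unfounded and, as you yourself note, delicate even if the input were available.

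The paper sidesteps both difficulties with one idea: work directly with a \emph{fixed} neighbourhood of $a(z_0)$. Choose a small Berkovich open disk $\sD\Subset U$ around $z_0$ with $a(\sD)$ a disk, and fix $\sD'\Subset a(\sD)$ containing $a(z_0)$. Because $\sup_{\sD}[f^{n_j},a]_{\can}\to 0$, for every large $j$ the disk $f^{n_j}(\sD)$ meets $a(\sD)$ and in fact contains $\sD'$ (non-archimedean disk trichotomy). Then
\[
\sup_{\sD'\cap\bP^1}[f^{n_{j+1}-n_j},\Id]
\le\sup_{f^{n_j}(\sD)\cap\bP^1}[f^{n_{j+1}-n_j},\Id]
=\sup_{\sD\cap\bP^1}[f^{n_{j+1}},f^{n_j}]\to 0,
\]
which is exactly \eqref{eq:convid} on the fixed set $\sD'$. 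This shows $a(z_0)\in\cE_f$ directly, with no detour through $f^{n_i}(U)$ and no closure argument.
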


\begin{proof}
 Fix $z_0\in U\cap\bP^1$. By Lemma \ref{th:hartogsunif},
 there is a Berkovich open disk $\sD$ relatively compact in $U$ and containing $z_0$ 
 such that 
 \begin{gather}
  \lim_{j\to\infty}\sup_{\sD}[f^{n_j},a]_{\can}(\cdot)=0,\label{eq:nonconstlimitrestrict}
 \end{gather} 
 and without loss of generality, 
 we can assume that $D$ is so small that $a(D)$ is a Berkovich open disc
 (see Fact \ref{th:rigid}). 
 Fix a Berkovich open disk $\sD'$ relatively compact in $a(\sD)$ and containing $a(z_0)$.
 Then by \eqref{eq:nonconstlimitrestrict},
 for every $j\in\bN$ large enough, $f^{n_j}(\sD)$ 
 is a Berkovich open disk intersecting $a(\sD)$,
 and moreover, contains $\sD'$. Hence,
 since $[f^{n_{j+1}-n_j},\Id]\circ f^{n_j}=[f^{n_{j+1}},f^{n_j}]
 \le[f^{n_{j+1}},a](\cdot)+[f^{n_j},a](\cdot)$ on $\bP^1$, we have
\begin{gather*}
 \sup_{\sD'\cap\bP^1}[f^{n_{j+1}-n_j},\Id]\le
\sup_{\sD\cap\bP^1}[f^{n_{j+1}},a](\cdot)+\sup_{\sD\cap\bP^1}[f^{n_j},a](\cdot),
\end{gather*}
 so by \eqref{eq:nonconstlimitrestrict},
 $\limsup_{j\to\infty}\sup_{\sD'\cap\bP^1}[f^{n_{j+1}-n_j},\Id]=0$.
This implies $a(U)\subset\cE_f$.
\end{proof}

 Let $Y$ be the component of $\cE_f$ containing $a(U)$.
 Let $p>0,k_0\in\bN,T,T_*$ be 
 as in Theorem \ref{th:uniformization} associated to this $Y$.

For any Berkovich closed connected affinoid $V$ in $U$,
by Lemma \ref{th:hartogsunif}, 
$\lim_{j\to\infty}\sup_V[f^{n_j},a]_{\can}(\cdot)=0$.
Then there exists
an $N\in\bN$ such that for every $j\ge N$,
the Berkovich closed connected affinoid $f^{n_j}(V)$ is 
contained in $Y$, and $k_0|(n_j-n_N)$.

For every $j\ge N$, $f^{n_j}=T^{(n_j-n_N)/k_0}\circ f^{n_N}$ 
on $V\cap\bP^1$. Taking a subsequence of $(n_j)$ if necessary,
the limit 
\begin{gather*}
  \lim_{j\to\infty}\frac{n_j-n_N}{k_0}=:\omega_0
\end{gather*} 
exists in $\bZ_p$, and
$a=\lim_{j\to\infty}f^{n_j}=\lim_{j\to\infty}T^{(n_j-n_N)/k_0}\circ f^{n_N}
 =T^{\omega_0}\circ f^{n_N}$
on $V\cap\bP^1$.
For every $j\ge N$, 
\begin{gather}
 f^{n_j}-a=
(T^{(n_j-n_N)/k_0}-T^{\omega_0})\circ f^{n_N}
\label{eq:zeros}
\end{gather}
on $V\cap\bP^1$, and increasing $N$ if necessary,
we also have $(n_j-n_N)/k_0\neq\omega_0$.

Let $Z_*$ be the set of all zeros in the closed connected affinoid
$f^{n_N}(V)\cap K$
of the non-constant holomorphic function $T_*\circ T^{\omega_0}$ on $Y\cap K$.
Then $\#Z_*<\infty$ (see Fact \ref{th:holo}). Hence $\#f^{-n_N}(Z_*)<\infty$,
and we can assume that $f^{-n_N}(Z_*)\subset K$ without loss of generality. 

Now we also assume that the Berkovich closed
connected affinoid $V$ is strict.

\begin{lemma}\label{th:atomic}
 $(\supp\nu)\cap ((\Int V)\setminus f^{-n_N}(Z_*))=\emptyset$.
\end{lemma}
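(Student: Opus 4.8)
The plan is to count, with multiplicity, the roots in $\Int V$ of the equation $f^{n_j}(\cdot)=a(\cdot)$ and to show this count stays bounded as $j\to\infty$; then the contribution of $\Int V\setminus f^{-n_N}(Z_*)$ to $\nu_{n_j}^a$ is $O(d^{-n_j})$, forcing $\nu$ to assign no mass to $(\Int V)\setminus f^{-n_N}(Z_*)$. First I would record that, by \eqref{eq:zeros}, on $V\cap\bP^1$ the roots of $f^{n_j}=a$ are exactly the preimages under $f^{n_N}$ of the zeros of $g_j:=T^{(n_j-n_N)/k_0}-T^{\omega_0}$ in the strict closed connected affinoid $f^{n_N}(V)\cap K$; since $f^{n_N}$ is a finite map of fixed degree on $V$, it suffices to bound the number of zeros of $g_j$ there, away from $Z_*$. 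Because $V$ is strict, $f^{n_N}(V)$ is a strict closed connected affinoid (Fact \ref{th:rigid}), and $g_j$ is holomorphic on a neighborhood of it (it is a uniform limit, hence restricts holomorphically, by Theorem \ref{th:uniformization}), so Fact \ref{th:holo} applies once we know $g_j$ is not identically zero — which holds since $(n_j-n_N)/k_0\neq\omega_0$ and $T^\omega$ is injective in $\omega$ in a suitable sense, or simply because $g_j\to 0$ uniformly while $g_j\not\equiv 0$.

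The key quantitative input is the iterative-logarithm asymptotic \eqref{eq:logarithm}: writing $\omega_j:=(n_j-n_N)/k_0$, we have $g_j=(\omega_j-\omega_0)\bigl(T_*\circ T^{\omega_0}+o(1)\bigr)$ locally uniformly on $f^{n_N}(V)\cap K$ as $j\to\infty$, where the error $o(1)$ is with respect to the sup-norm on the relevant affinoid. Dividing by $\omega_j-\omega_0$, the functions $h_j:=g_j/(\omega_j-\omega_0)$ converge uniformly to $T_*\circ T^{\omega_0}$, whose zero set in $f^{n_N}(V)\cap K$ is the finite set $Z_*$. Now I would invoke the standard non-archimedean stability of zeros under uniform convergence (a consequence of the Weierstrass preparation theorem / Fact \ref{th:holo}): for any strict closed connected affinoid $V'$ avoiding $Z_*$ and contained in $\Int(f^{n_N}(V))$, the limit $T_*\circ T^{\omega_0}$ has no zero and is bounded away from $0$ on $V'$, hence for $j$ large $h_j$ — and therefore $g_j$ — has no zero on $V'$ either. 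Consequently all zeros of $g_j$ in such a $V'$ disappear for large $j$, and the only zeros surviving near the boundary or near $Z_*$ number at most the fixed $\deg(T_*\circ T^{\omega_0})$ plus a boundary term, in particular a bound independent of $j$.

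Putting this together: any point $\cS_0\in(\Int V)\setminus f^{-n_N}(Z_*)$ has a strict Berkovich open connected affinoid neighborhood $U_0$ with $\overline{U_0}\subset(\Int V)\setminus f^{-n_N}(Z_*)$, whose image $f^{n_N}(U_0)$ is then a strict affinoid avoiding $Z_*$; by the previous paragraph, for all $j$ large the equation $f^{n_j}=a$ has no root in $U_0\cap\bP^1$, so $\nu_{n_j}^a(U_0)=0$ for $j$ large, whence $\nu(U_0)=0$ by weak convergence (using that $\overline{U_0}$ is compact, so $\limsup_j\nu_{n_j}^a(\overline{U_0})\ge\nu(U_0)$ — actually the reverse, $\nu(U_0)\le\liminf\nu_{n_j}^a(U_0)$ needs care, so I would instead use $\nu(\overline{U_0})\ge\limsup\nu_{n_j}^a(\overline{U_0})$ with a slightly larger $U_0$, or apply the portmanteau inequality $\nu(\overline{U_0})\le\liminf$ on the closed set — the clean route is: for a compact $C\subset U_0$, $\nu(C)\le\liminf_j\nu_{n_j}^a(U_0)=0$). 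Since $\cS_0$ and its neighborhood were arbitrary, $(\supp\nu)\cap((\Int V)\setminus f^{-n_N}(Z_*))=\emptyset$. The main obstacle I anticipate is the second paragraph: making precise that the non-archimedean analytic zero-counting is genuinely stable under the uniform convergence $h_j\to T_*\circ T^{\omega_0}$ on strict affinoids, including controlling zeros that could escape to the boundary; this is where one must be careful to pass to slightly shrunken strict affinoids and to use that $T_*\circ T^{\omega_0}$ is non-constant with only finitely many zeros, so that its modulus is bounded below on the shrunken pieces.
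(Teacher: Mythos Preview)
Your approach is correct and coincides with the paper's: both excise the preimage of $Z_*$, use the positive minimum of $|T_*\circ T^{\omega_0}|$ on the resulting strict closed affinoid together with the uniform convergence \eqref{eq:logarithm} to conclude that $g_j$ is zero-free there for large $j$, and then infer that $\nu$ has no mass on $(\Int V)\setminus f^{-n_N}(Z_*)$. The paper packages the excision via a single exhausting family $V_\epsilon=V\setminus\bigcup_{w\in f^{-n_N}(Z_*)}\{|\cS-w|<\epsilon\}$ rather than pointwise neighborhoods, and your anticipated ``main obstacle'' dissolves immediately by the ultrametric inequality: once $\sup|h_j-T_*\circ T^{\omega_0}|<\min|T_*\circ T^{\omega_0}|$ on the affinoid, one has $|h_j|=|T_*\circ T^{\omega_0}|>0$ there.
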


\begin{proof}
 For each $\epsilon>0$ in $|K^*|$, set
\begin{gather*}
 V_{\epsilon}:=V\setminus \bigcup_{w\in f^{-n_N}(Z_*)}\{\cS\in\sP^1\setminus\{\infty\}:|\cS-w|<\epsilon\},
\end{gather*}
 which is a strict Berkovich closed connected affinoid.
 Then $f^{n_N}(V_{\epsilon})$ is a strict
 Berkovich closed connected affinoid in $Y$.
 Hence by the maximum modulus principle, the minimum
 \begin{gather*}
  \min\{|T_*\circ T^{\omega_0}(z)|:z\in f^{n_N}(V_{\epsilon})\cap K\}>0
 \end{gather*}
 exists (see Fact \ref{th:rigid}) and is positive by the choice of $V_{\epsilon}$.
 Then from the uniform convergence \eqref{eq:logarithm}
 on $f^{n_N}(V_{\epsilon})\cap K$, 
 for every $j\in\bN$ large enough, 
\begin{gather*}
 |T^{(n_j-n_N)/k_0}-T^{\omega_0}|>0 
\end{gather*} 
 on $f^{n_N}(V_{\epsilon})\cap K$, which with \eqref{eq:zeros} implies that
 there is no root of $f^{n_j}=a$ in $V_{\epsilon}\cap\bP^1$. 
 Hence $(\supp\nu)\cap\Int V_{\epsilon}=\emptyset$, which implies that 
 $(\supp\nu)\cap((\Int V)\setminus f^{-n_N}(Z_*))=\emptyset$.
\end{proof}

\begin{lemma}\label{th:complement}
 $(\supp\nu)\cap((\Int V)\cap f^{-n_N}(Z_*))=\emptyset$.
\end{lemma}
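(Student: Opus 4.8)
The plan is to show that $\nu$ puts no mass near any point of the finite set $(\Int V)\cap f^{-n_N}(Z_*)$. Since $\#Z_*<\infty$ (Fact \ref{th:holo}), the set $f^{-n_N}(Z_*)$ is finite, so it is enough to exhibit, for each $w\in(\Int V)\cap f^{-n_N}(Z_*)$, a Berkovich open neighborhood $\sD_w\subset\Int V$ of $w$ with $\nu(\sD_w)=0$; this gives $w\notin\supp\nu$, and running over the finitely many such $w$ yields the assertion.

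So fix $w\in(\Int V)\cap f^{-n_N}(Z_*)$, put $\zeta:=f^{n_N}(w)\in Z_*$, and for $j\ge N$ write $\omega_j:=(n_j-n_N)/k_0$, so that $\omega_j\to\omega_0$ in $\bZ_p$. Recall from Theorem \ref{th:uniformization} that $h:=T_*\circ T^{\omega_0}$ is holomorphic and non-constant on $Y\cap K$, and recall that $f^{n_N}(V)\subset Y$ and that $f^{n_N}$ is an open map (Fact \ref{th:rigid}); hence $f^{n_N}(\Int V)$ is an open neighborhood of $\zeta$ inside $Y$. Choose a Berkovich open disk $\sD'_\zeta$ about $\zeta$ whose closure is a strict Berkovich closed disk contained in $f^{n_N}(\Int V)$ on which the only zero of $h$ is $\zeta$, say of order $m:=\ord_\zeta h\ge1$ (possible since $h$ has only finitely many zeros in a strict closed affinoid by Fact \ref{th:holo}), and let $\sD_w$ be the component of $(f^{n_N})^{-1}(\sD'_\zeta)$ containing $w$; shrinking $\sD'_\zeta$ we may also arrange that $\sD_w\subset\Int V$ and $\sD_w\cap f^{-n_N}(Z_*)=\{w\}$. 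By Fact \ref{th:rigid}, $f^{n_N}\colon\sD_w\to\sD'_\zeta$ is proper and surjective of a finite degree $D_w$, which does not depend on $j$.

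I now bound the number of roots of $f^{n_j}=a$ in $\sD_w$. For $j$ large, $T^{\omega_j}-T^{\omega_0}$ is not identically zero on $Y\cap K$ (else the limit in \eqref{eq:logarithm} would vanish, contradicting that $h$ is non-constant), and dividing \eqref{eq:logarithm} by the nonzero constant $\omega_j-\omega_0$ gives $(T^{\omega_j}-T^{\omega_0})/(\omega_j-\omega_0)\to h$ uniformly on $\overline{\sD'_\zeta}\cap K$. Because the multiplicity-counted number of zeros of a not-identically-zero holomorphic function on a strict closed disk is stable under small uniform perturbation (a form of the non-archimedean Hurwitz/Rouch\'e theorem, via Weierstrass preparation; cf.\ Fact \ref{th:holo}), for all large $j$ the equation $T^{\omega_j}=T^{\omega_0}$ has exactly $m$ roots, counted with multiplicity, in $\sD'_\zeta\cap K$. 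Combining this with \eqref{eq:zeros}, the multiplicativity of the order of vanishing under composition with $f^{n_N}$, and the constancy of the total local degree $\sum_{z\in(f^{n_N})^{-1}(y)\cap\sD_w}\deg_z f^{n_N}=D_w$ over $y\in\sD'_\zeta$ (Fact \ref{th:rigid}), we obtain that $f^{n_j}=a$ has exactly $mD_w$ roots in $\sD_w$, counted with multiplicity, for all large $j$. Hence $\nu_{n_j}^a(\sD_w)\le mD_w/(d^{n_j}+\deg a)\to0$, and since $\sD_w$ is open and $\nu_{n_j}^a\to\nu$ weakly on the compact space $\sP^1$, lower semicontinuity of mass on open sets yields $\nu(\sD_w)\le\liminf_{j\to\infty}\nu_{n_j}^a(\sD_w)=0$. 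Thus $w\notin\supp\nu$, and the proof is complete.

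The step I expect to be most delicate is the non-archimedean Hurwitz/Rouch\'e input used above — the stability of the multiplicity-counted zero set of $T^{\omega_j}-T^{\omega_0}$ under the uniform convergence \eqref{eq:logarithm} — together with the careful transport of multiplicities through $f^{n_N}$ by means of \eqref{eq:zeros}; both are routine given Facts \ref{th:holo} and \ref{th:rigid}, but they warrant care.
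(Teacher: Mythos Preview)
Your argument is correct and close in spirit to the paper's, but the organization is genuinely different. The paper splits a small disk $\sD$ around $z_0\in(\Int V)\cap f^{-n_N}(Z_*)$ as $\{z_0\}\cup(\sD\setminus\{z_0\})$: it invokes the already-proved Lemma~\ref{th:atomic} to kill $\nu$-mass on $\sD\setminus\{z_0\}$, and then only needs the uniform convergence \eqref{eq:logarithm} to bound the multiplicity of $z_0$ as a root of $f^{n_j}=a$ by $(\ord_\zeta h)\cdot d^{n_N}$, which forces $\nu_{n_j}^a(\{z_0\})\to 0$. You instead bypass Lemma~\ref{th:atomic} entirely and count \emph{all} roots of $f^{n_j}=a$ in $\sD_w$ via a non-archimedean Hurwitz statement applied to $(T^{\omega_j}-T^{\omega_0})/(\omega_j-\omega_0)\to h$ on $\overline{\sD'_\zeta}$, then transport through $f^{n_N}$ using the constant total degree $D_w$. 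Your route is more self-contained for this lemma, at the cost of invoking the full Hurwitz zero-count on a disk rather than just a pointwise multiplicity bound; the paper's route leans on Lemma~\ref{th:atomic} but then needs only the weaker local input at $\zeta$. Both ultimately rest on the same Weierstrass-preparation mechanism behind \eqref{eq:logarithm}, so the delicate step you flag is exactly the one the paper also uses (implicitly) for its multiplicity bound \eqref{eq:mult}.
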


\begin{proof}
 Let $z_0\in(\Int V)\cap f^{-n_N}(Z_*)$.
 If $z_0$ is a root of $f^{n_j}=a$, then by
 \eqref{eq:zeros} and the uniform convergence \eqref{eq:logarithm} on $V$,
 the multiplicity of $z_0$
 as a root of $f^{n_j}=a$ is bounded from above by
 \begin{gather}
 (\deg_{f^{n_N}(z_0)}(T_*\circ T^{\omega_0}))\cdot d^{n_N}-1.\label{eq:mult}
 \end{gather}
 For any Berkovich open disk $\sD$ in $V$ containing $z_0$ and satisfying
 $\overline{\sD}\cap f^{-n_N}(Z_*)=\{z_0\}$, 
 from the upper bound \eqref{eq:mult} and Lemma \ref{th:atomic},
 \begin{align*}
 0\le&\limsup_{j\to\infty}\nu_{n_j}^a(\sD)
 \le\limsup_{j\to\infty}\nu_{n_j}^a(\{z_0\})+\limsup_{j\to\infty}\nu_{n_j}^a(\sD\setminus\{z_0\})\\
 \le&\limsup_{j\to\infty}\frac{(\deg_{f^{n_N}(z_0)}(T_*\circ T^{\omega_0}))\cdot d^{n_N}}{d^{n_j}}
  +\nu((\Int V)\setminus f^{-n_N}(Z_*))=0.
 \end{align*}
 Hence $\nu(\sD)=0$ if $\sD$ is small enough, so
 $z_0\not\in\supp\nu$.
\end{proof}

From Lemmas \ref{th:atomic} and \ref{th:complement}, $(\Int V)\cap(\supp\nu)=\emptyset$.
This implies that $U\cap(\supp\nu)=\emptyset$, so $\{U_{\nu}<0\}\cap(\supp\nu)=\emptyset$.
\end{proof}

Now the proof of Theorem \ref{th:moving} is complete.

\section{The case of polynomials}\label{sec:example}

Let $K$ be an algebraically closed field of any characteristic and complete with respect
to a non-trivial absolute value.

For every polynomial $\phi\in K[z]$ on $\bP^1$, the factorization of $\phi$ extends
$|\phi|$ continuously to $\sP^1\setminus\{\infty\}$ using the extended
$|\cdot-w|$ on $\sP^1\setminus\{\infty\}$ for each $w\in\bP^1\setminus\{\infty\}$. 
For polynomials $\phi_i\in K[z]$ ($i\in\{1,2\}$), $\phi_1-\phi_2$ is also a polynomial.
Hence the continuous extension $\cS\mapsto|\phi_1-\phi_2|_{\can}(\cS)$ to $\sP^1\setminus\{\infty\}$
of the function $z\mapsto|\phi_1(z)-\phi_2(z)|$ on $\bP^1\setminus\{\infty\}$
exists so that on $\sP^1\setminus\{\infty\}$,
\begin{gather}
  |\phi_1-\phi_2|_{\can}(\cdot)=[\phi_1,\phi_2]_{\can}(\cdot)\max\{1,|\phi_1(\cdot)|\}\max\{1,|\phi_2(\cdot)|\}.\label{eq:polyext}
\end{gather} 

Let $f\in K[z]$ is a polynomial on $\bP^1$ of degree $d>1$. 
The Berkovich filled-in Julia set of $f$ is
\begin{gather*}
 \sK(f):=\{\cS\in\sP^1:\lim_{n\to\infty}f^n(\cS)\neq\infty\}.
\end{gather*} 
 Noting that $f(\infty)=\infty\in E(f)$,
 let $\sA_{\infty}=\sA_{\infty}(f)$ be the fixed
 immediate attractive basin of $f$ containing $\infty$.
 Then $f^{-1}(\sA_{\infty})=\sA_{\infty}$
 since $\deg(f:\sA_{\infty}\to\sA_{\infty})=\deg_{\infty}f=d$.
 Hence $\sA_{\infty}$ is completely invariant under $f$, and
 $\sK(f)=\sP^1\setminus\sA_{\infty}$.
 Moreover, $\partial\sA_{\infty}=\partial\sK(f)=\sJ(f)$. Indeed,
 by Theorem \ref{th:inclusion}, $\sJ(f)\subset\supp\mu_f$.
 Fix $\cS\in\sA_{\infty}\cap\bP^1$. Then 
 by Theorem \ref{th:general}, $\supp\mu_f\subset\bigcap_{N\in\bN}
\overline{\bigcup_{n\ge N}f^{-n}(\cS)}$, which is contained in
$\partial\sA_{\infty}\subset\sJ(f)$.

For each $R>0$ in $|K^*|$, let $\sD_R^*:=\{\cS\in\sP^1\setminus\{\infty\}:|\cS|>R\}$
and $\sD_R:=\sD_R^*\cup\{\infty\}$. If $R>0$ is large enough, then
since $\infty$ is a (super)attracting fixed point of $f$, 
we have $\inf_{z\in\sD_R^*\cap\bP^1}|f(z)|>R$. Hence by the
continuity of $|f(\cdot)|$, $\inf_{\sD_R^*}|f(\cdot)|>R$. This implies that
$\sD_R\Subset f^{-1}(\sD_R)$. Since $\sA_{\infty}=\bigcup_{n\in\bN}f^{-n}(\sD_R)$,
for every Berkovich closed disk $\sD$ in $\sA_{\infty}\setminus\{\infty\}$, 
we have $\liminf_{n\to\infty}\inf_{\sD}|f^n(\cdot)|>R$. Hence we have
\begin{gather}
 \liminf_{n\to\infty}\inf_{\sD}|f^n(\cdot)|=\infty.\label{eq:diverge}
\end{gather}

\begin{lemma}\label{th:recurrence}
 Suppose that $K$ is non-archimedean. For every 
 polynomial $f\in K[z]$ on $\bP^1$ of degree $d>1$ and 
 every polynomial $a\in K[z]$ on $\bP^1$,
 the condition \eqref{eq:hyperbolic} holds 
 on $\sA_{\infty}(f)\setminus\{\infty\}$, and on $\sK(f)$,
 $\sup_{n\in\bN}|\log[f^n,a]_{\can}(\cdot)-\log|f^n-a|_{\can}(\cdot)|<\infty$.

In particular, \eqref{eq:hyperbolic} holds on $\sP^1\setminus\bP^1$
if and only if the condition
\begin{gather}
 \lim_{j\to\infty}\frac{1}{d^{n_j}}\log|f^{n_j}-a|_{\can}(\cdot)=0\tag{\ref{eq:hyperbolic}'}
 \label{eq:hyperbolicpoly}
\end{gather}
holds on $\sK(f)\setminus\bP^1$.
\end{lemma}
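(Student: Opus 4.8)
The plan is to reduce the whole statement to the polynomial identity \eqref{eq:polyext}, applied with $\phi_1=f^n$ and $\phi_2=a$: on $\sP^1\setminus\{\infty\}$ it reads $\log|f^n-a|_{\can}(\cdot)=\log[f^n,a]_{\can}(\cdot)+\log\max\{1,|f^n|_{\can}(\cdot)\}+\log\max\{1,|a|_{\can}(\cdot)\}$, so the two assertions of the lemma amount to controlling the non-negative error term $\log\max\{1,|f^n|_{\can}\}+\log\max\{1,|a|_{\can}\}$ separately on $\sA_{\infty}(f)\setminus\{\infty\}$ and on $\sK(f)$.

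First I would treat $\sA_{\infty}(f)\setminus\{\infty\}$. Fix $\cS\in\sA_{\infty}(f)\setminus\{\infty\}$. Since $\sK(f)=\sP^1\setminus\sA_{\infty}(f)$ and $\sK(f)$ is the set of points whose $f$-orbit does not converge to $\infty$, we have $f^n(\cS)\to\infty$ in $\sP^1$; and since $f^{-1}(\infty)=\{\infty\}$ for the polynomial $f$, $f^n(\cS)\neq\infty$ for every $n$, so $|f^n|_{\can}(\cS)=|\cdot|_{\can}(f^n(\cS))\to\infty$ (compare \eqref{eq:diverge}, whose argument applies to the singleton $\{\cS\}$). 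Now $\psi\mapsto|\psi|_{\can}(\cS)$ is a multiplicative, hence non-archimedean, seminorm on $K[z]$, so as soon as $|f^n|_{\can}(\cS)>\max\{1,|a|_{\can}(\cS)\}$ — true for all large $n$ — the ultrametric inequality forces $|f^n-a|_{\can}(\cS)=|f^n|_{\can}(\cS)$. Substituting into the identity above yields $[f^n,a]_{\can}(\cS)=1/\max\{1,|a|_{\can}(\cS)\}$ for all large $n$, a fixed positive constant; hence $d^{-n}\log[f^n,a]_{\can}(\cS)\to 0$, and a fortiori $d^{-n_j}\log[f^{n_j},a]_{\can}(\cS)\to 0$, which is \eqref{eq:hyperbolic}.

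Next I would bound the error term uniformly in $n$ on $\sK(f)$. Because $\infty\in\sA_{\infty}(f)$, the set $\sK(f)$ is a compact subset of $\sP^1\setminus\{\infty\}$, so $|a|_{\can}$ is bounded there. For the remaining term, $\sK(f)$ is completely invariant under $f$ — $f^{-1}(\sA_{\infty})=\sA_{\infty}$ forces $f^{-1}(\sK(f))=\sK(f)$ and $f(\sK(f))\subset\sK(f)$ — so $f^n(\sK(f))\subset\sK(f)$ and therefore $\sup_{\sK(f)}|f^n|_{\can}=\sup_{f^n(\sK(f))}|\cdot|_{\can}\le\sup_{\sK(f)}|\cdot|_{\can}<\infty$, a bound independent of $n$. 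By the displayed identity this is exactly $\sup_{n\in\bN}\sup_{\sK(f)}\bigl|\log[f^n,a]_{\can}(\cdot)-\log|f^n-a|_{\can}(\cdot)\bigr|<\infty$.

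For the ``in particular'' clause I would write $\sP^1\setminus\bP^1=(\sA_{\infty}(f)\setminus\bP^1)\sqcup(\sK(f)\setminus\bP^1)$, using $\sP^1=\sA_{\infty}(f)\sqcup\sK(f)$ and $\infty\in\bP^1$. On $\sA_{\infty}(f)\setminus\bP^1\subset\sA_{\infty}(f)\setminus\{\infty\}$ condition \eqref{eq:hyperbolic} holds automatically by the first step, while on $\sK(f)\setminus\bP^1$, dividing the uniform bound of the previous step by $d^{n_j}\to\infty$ shows that $d^{-n_j}\log[f^{n_j},a]_{\can}\to 0$ there precisely when $d^{-n_j}\log|f^{n_j}-a|_{\can}\to 0$ there; combining the two regions gives the stated equivalence. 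The only delicate point throughout is the manipulation of $|\cdot|_{\can}$ at the non-classical points of $\sP^1$: one must record that for a polynomial $\psi$ the function $|\psi|_{\can}$ is the evaluation of a non-archimedean multiplicative seminorm and that $|f^n|_{\can}=|\cdot|_{\can}\circ f^n$ on $\sP^1\setminus\{\infty\}$, so that the ultrametric inequality and the invariance bound used above are valid at every point of $\sP^1\setminus\{\infty\}$, not merely on $\bP^1$.
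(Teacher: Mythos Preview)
Your argument is correct and follows essentially the same route as the paper's proof: both hinge on the identity \eqref{eq:polyext} and on controlling $|f^n|$ separately on $\sA_\infty(f)\setminus\{\infty\}$ (where it diverges, forcing $|f^n-a|_{\can}=|f^n|_{\can}$ and hence $[f^n,a]_{\can}=1/\max\{1,|a|_{\can}\}$ eventually) and on $\sK(f)$ (where it is uniformly bounded by invariance). The only cosmetic difference is that you invoke the multiplicative-seminorm description of Berkovich points to carry out the ultrametric estimate directly at every $\cS\in\sP^1\setminus\{\infty\}$, whereas the paper performs the same computation on $\bP^1$ and then transfers it to $\sP^1$ by density and continuity of $[f^n,a]_{\can}$.
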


\begin{proof}
For every Berkovich closed disk $\sD$ in $\sA_{\infty}\setminus\{\infty\}$,
fix an $R>0$ in $|K^*|$ so large that $R>\max\{1,\sup_{\sD}|a(\cdot)|\}$. 
By \eqref{eq:polyext} and \eqref{eq:diverge}, for every $n\in\bN$ large enough, on $\sD\cap\bP^1$, 
$\log[f^n(\cdot),a(\cdot)]=\log|f^n(\cdot)|-\log|f^n(\cdot)|-\log\max\{1,|a(\cdot)|\}\ge -\log R$.
Hence $\log[f^n,a]_{\can}(\cdot)\ge -\log R$ on $\sD$
since both sides are continuous. This implies that 
 \eqref{eq:hyperbolic} holds 
 on $\sA_{\infty}(f)\setminus\{\infty\}$.

Next, fix an $R>0$ in $|K^*|$ so large that $\sD_R\subset\sA_{\infty}$. Then
$\bigcup_{n\in\bN}f^n(\sK(f))\subset\sP^1\setminus\sD_R$.
Hence by \eqref{eq:polyext}, 
\begin{multline*}
 \sup_{n\in\bN}|\log[f^n,a]_{\can}(\cdot)-\log|f^n-a|_{\can}(\cdot)|\\
 \le\log\max\{1,R\}+\log\max\{1,|a(\cdot)|\}<\infty
\end{multline*}
on $\sK(f)$. 
\end{proof}

We conclude this section with an example.
Suppose that $K$ has characteristic $p>0$, and
set $f(z)=z+z^p$ and $a=\Id$. 
Then $\sK(f)=\{\cS\in\sP^1\setminus\{\infty\}:|\cS|\le 1\}$. 
For each $j\in\bN$,
$f^{p^j}(z)=z+z^{p^{p^j}}$, 
and the equality
\begin{gather*}
 \log|f^{p^j}-\Id|_{\can}=p^{p^j}\log|\cdot|
\end{gather*}
holds on $\bP^1\setminus\{\infty\}$.
By the continuity of both sides, this extends to $\sP^1\setminus\{\infty\}$.
In particular, \eqref{eq:hyperbolicpoly} does not hold on $\sK(f)\setminus\bP^1$. 
Hence the equidistribution property \eqref{eq:general} 
for $f(z)=z+z^p$ and $a=\Id$ does not hold.

Of course, this could be more directly seen since
\begin{gather*}
 \lim_{j\to\infty}\nu_{p^j}^a=\lim_{j\to\infty}\frac{1}{p^{p^j}+1}(p^{p^j}\delta_0+\delta_{\infty})=\delta_0
\end{gather*}
weakly on $\sP^1$, but $\supp\mu_f=\sJ(f)=\partial\sK(f)=\{\cS_{\can}\}$.

\subsection*{Acknowledgements}
The author thanks Professors Charles Favre and 
Juan Rivera-Letelier for invaluable comments,
Professors Mattias Jonsson and William Gignac for 
stimulating discussion on the Problem,
and the referee for very careful scrutiny and pertinent comments.
This research was partially supported by JSPS Grant-in-Aid for Young Scientists (B), 21740096.

\def\cprime{$'$}

\end{document}